\DeclareMathAlphabet{\mathpzc}{OT1}{pzc}{m}{it}
\newcommand{\R}{\mathbb{R}}
\newtheorem{theorem}{Theorem}[section]
\newtheorem{corollary}[theorem]{Corollary}
\newtheorem{lemma}[theorem]{Lemma}
\newtheorem{definition}[theorem]{Definition}
\newtheorem{remark}[theorem]{Remark}
\renewcommand*\env@matrix[1][*\c@MaxMatrixCols c]{%
  \hskip -\arraycolsep
  \let\@ifnextchar\new@ifnextchar
  \array{#1}}
\newcommand{\PS}{\mathcal{P}_{{S}}}
\newcommand{\PSsub}{\mathcal{P}_{{S},sub}}
\newcommand{\PF}{\mathcal{P}_{{F}}}
\newcommand{\PSeps}{\mathcal{P}_{{S},\epsilon}}
\newcommand{\PSxi}{\mathcal{P}_{{S},\xi}}
\newcommand{\PSepsxi}{\mathcal{P}_{{S},\epsilon,\xi}}
\newcommand{\qr}{q_u}
\newcommand{\ftilde}{\widetilde{f}}
\newcommand{\alphahat}{\widehat{\alpha}}
\newcommand{\SetQ}{\mathcal{Q}}
\newcommand{\SetQr}{{\mathcal{Q}}_u}
\begin{document}

\title{Gradient-Based Multiobjective Optimization with Uncertainties}
\author[*]{Sebastian Peitz}
\author[*]{Michael Dellnitz}
\affil[*]{\normalsize Department of Mathematics, Paderborn University, Warburger Str.~100, D-33098 Paderborn}

\maketitle

\begin{abstract}
In this article we develop a gradient-based algorithm for the solution of multiobjective optimization problems with uncertainties. To this end, an additional condition is derived for the descent direction in order to account for inaccuracies in the gradients and then incorporated in a subdivison algorithm for the computation of global solutions to multiobjective optimization problems. Convergence to a superset of the Pareto set is proved and an upper bound for the maximal distance to the set of substationary points is given. Besides the applicability to problems with uncertainties, the algorithm is developed with the intention to use it in combination with model order reduction techniques in order to efficiently solve PDE-constrained multiobjective optimization problems.
\end{abstract}

\section{Introduction}
\label{sec:Introduction}
In many applications from industry and economy, one is interested in simultaneously optimizing several criteria. For example, in transportation one wants to reach a destination as fast as possible while minimizing the energy consumption. This example illustrates that in general, the different objectives contradict each other. Therefore, the task of computing the set of optimal compromises between the conflicting objectives, the so-called \emph{Pareto set}, arises. This leads to a multiobjective optimization problem (MOP). Based on the knowledge of the Pareto set, a \emph{decision maker} can use this information either for improved system design or for changing parameters during operation, as a reaction on external influences or changes in the system state itself.

Multiobjective optimization is an active area of research. Different methods exist to address MOPs, e.g.~deterministic approaches \cite{Mie99,Ehr05}, where ideas from scalar optimization theory are extended to the multiobjective situation. In many cases, the resulting solution method involves solving multiple scalar optimization problems consecutively. Continuation methods make use of the fact that under certain smoothness assumptions, the Pareto set is a manifold that can be approximated by continuation methods known from dynamical systems theory \cite{Hil01}. Another prominent approach is based on evolutionary algorithms \cite{CCvVL02}, where the underlying idea is to evolve an entire set of solutions (population) during the optimization process. Set oriented methods provide an alternative deterministic approach to the solution of MOPs. Utilizing subdivision techniques (cf.~\cite{DSH05,Jah06,SWO+13}), the desired Pareto set is approximated by a nested sequence of increasingly refined box coverings. In the latter approach, gradients are evaluated to determine a descent direction for all objectives.

Many solution approaches are gradient-free since the benefit of derivatives is less established than in single objective optimization \cite{Bos12}. Exceptions are scalarization methods since the algorithms used in this context mainly stem from scalar optimization. Moreover, in so-called \emph{memetic algorithms} \cite{NCM12}, evolutionary algorithms are combined with local search strategies, where gradients are utilized (see e.g.~\cite{LSCCS10,SML+16,SASL16}). Finally, several authors also develop gradient-based methods for MOPs directly. In \cite{FS00,SSW02,Des12}, algorithms are developed where a single descent direction is computed in which all objectives decrease. In \cite{Bos12}, a method is presented by which the entire set of descent directions can be determined, an extension of Newton's method to MOPs with quadratic convergence is presented in \cite{FDS09}. 

Many real world problems possess uncertainties for various reasons such as the ignorance of the exact underlying dynamical system or unknown material properties. The use of reduced order models in order to decrease the computational effort also introduces errors which can often be quantified \cite{TV09}. A similar concept exists in the context of evolutionary computation, where \emph{Surrogate-assisted evolutionary computation} (see e.g.~\cite{Jin11} for a survey) is often applied in situations where the exact model is too costly to solve.
In this setting, the set of \emph{almost Pareto optimal points} has to be computed (see also \cite{Whi86}, where the idea of \emph{$\epsilon$ efficiency} was introduced). 
Many researchers are investigating problems related to uncertainty quantification and several authors have addressed multiobjective optimization problems with uncertainties. In \cite{Hug01} and in \cite{Tei01}, probabilistic approaches to multiobjective optimization problems with uncertainties were derived independently. In \cite{DG05,DMM05,BZ06,SM08}, evolutionary algorithms were developed for problems with uncertain or noisy data in order to compute robust approximations of the Pareto set. In \cite{SCCTT08}, stochastic search was used, cell mapping techniques were applied in \cite{HSS13} and in \cite{EW07}, the weighted sum method was extended to uncertainties. 

In this article, we present extensions to the gradient-free and gradient-based global subdivision algorithms for unconstrained MOPs developed in \cite{DSH05} which take into account inexactness in both the function values and the gradients. The algorithms compute a set of solutions which is a superset of the Pareto set with an upper bound for the distance to the Pareto set. The remainder of the article is organized in the following manner. In Section~\ref{sec:MO}, we give a short introduction to multiobjective optimization in general and gradient-based descent directions for MOPs. In Section~\ref{sec:InexactGradients}, a descent direction for all objectives under inexact gradient information is developed and an upper bound for the distance to a Pareto optimal point is given. In Section~\ref{sec:Subdivision}, the subdivision algorithm presented in \cite{DSH05} is extended to inexact function and gradient values before we present our results in Section~\ref{sec:Results} and draw a conclusion in Section~\ref{sec:Conclusion}.

\section{Multiobjective Optimization}
\label{sec:MO}
Consider the continuous, unconstrained multiobjective optimization problem
\begin{align}
	\min_{x\in\R^n} F(x) = \min_{x\in\R^n} \left( \begin{array}{c} f_1(x) \\ \vdots \\ f_k(x) \end{array} \right), \tag{MOP} \label{eq:MOP}
\end{align}
where $F: \R^n \rightarrow \R^k$ is a vector valued objective function with continuously differentiable objective functions $f_i: \R^n \rightarrow \R, i = 1, \ldots, k$.
The space of the parameters $x$ is called the \emph{decision space} and the function $F$ is a mapping to the $k$-dimensional \emph{objective space}. In contrast to single objective optimization problems, there exists no total order of the objective function values in $\mathbb{R}^k, k \geq 2$ (unless the objectives are not conflicting). Therefore, the comparison of values is defined in the following way \cite{Mie99}:
\begin{definition}
	Let $v, w \in \mathbb{R}^k$. The vector $v$ is \emph{less than} $w$ $(v <_p w)$, if $v_i < w_i$ for all $i \in \left\lbrace 1, \ldots, k \right\rbrace$. The relation $\leq_p$ is defined in an analogous way.
\end{definition}
A consequence of the lack of a total order is that we cannot expect to find isolated optimal points. Instead, the solution of \eqref{eq:MOP} is the set of optimal compromises, the so-called \emph{Pareto set} named after Vilfredo Pareto:
\begin{definition}~
	\label{def:Pareto_optimality}
	\begin{itemize}
		\item[(a)] A point $x^* \in \R^n$ \emph{dominates} a point $x \in \R^n$, if $F(x^*) \leq_p F(x)$ and $F(x^*) \neq F(x)$.
		\item[(b)] A point $x^* \in \R^n$ is called \emph{(globally) Pareto optimal} if there exists no point $x \in \R^n$ dominating $x^*$. The image $F(x^*)$ of a (globally) Pareto optimal point $x^*$ is called a \emph{(globally) Pareto optimal value}.
		\item[(c)] The set of non-dominated points is called the \emph{Pareto set} $\PS$, its image the \emph{Pareto front} $\PF$.
	\end{itemize}
\end{definition}
Consequently, for each solution that is contained in the Pareto set, one can only improve one objective by accepting a trade-off in at least one other objective. That is, roughly speaking, in a two-dimensional problem, we are interested in finding the ``lower left'' boundary of the reachable set in objective space (cf.~Figure~\ref{fig:MOP_Example} (b)). A more detailed introduction to multiobjective optimization can be found in e.g.~\cite{Mie99,Ehr05}.
\begin{figure}[h!]
	\centering
	\parbox[b]{0.49\textwidth}{\centering \includegraphics[width=0.45\textwidth]{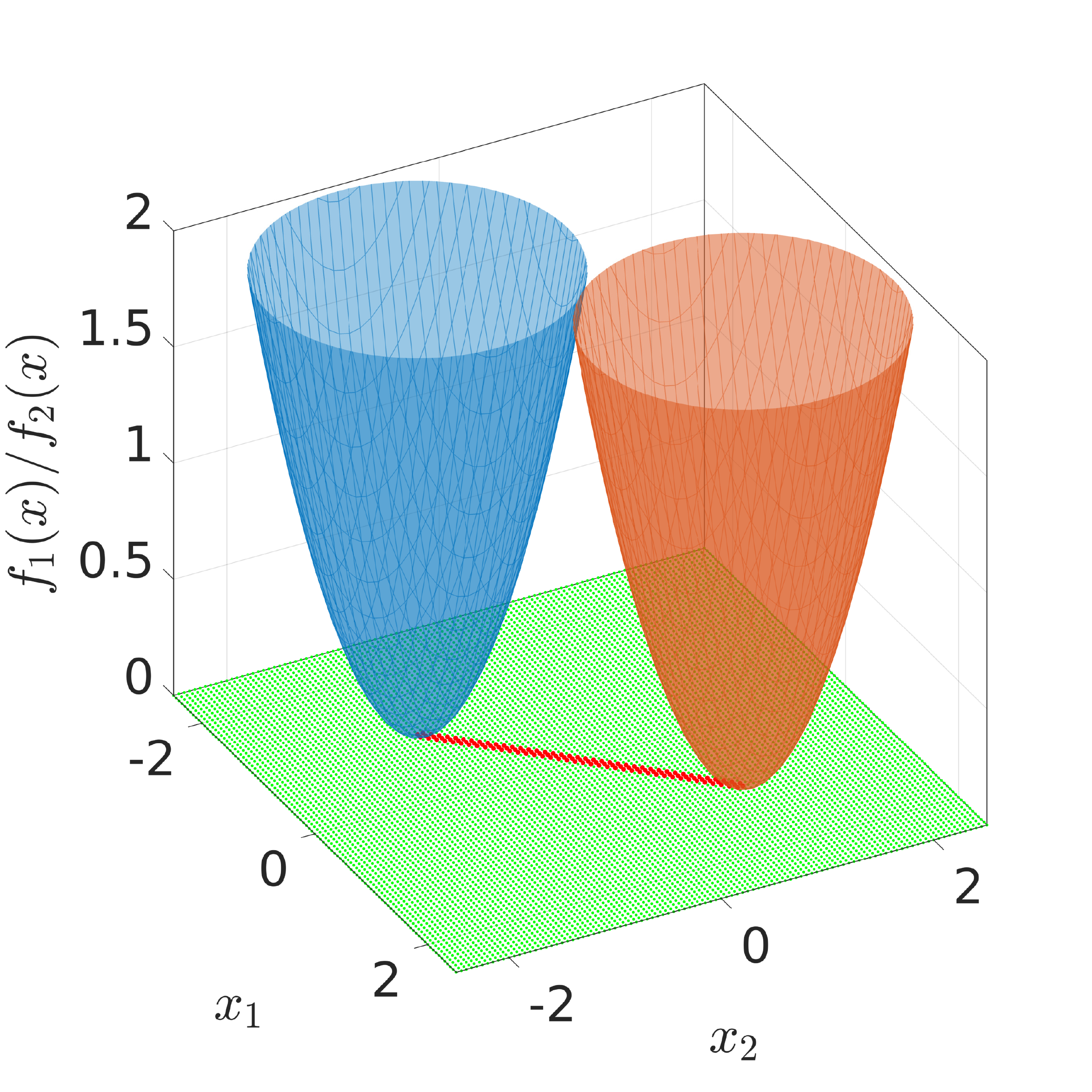}\\(a)}
	\parbox[b]{0.49\textwidth}{\centering \includegraphics[width=0.4\textwidth]{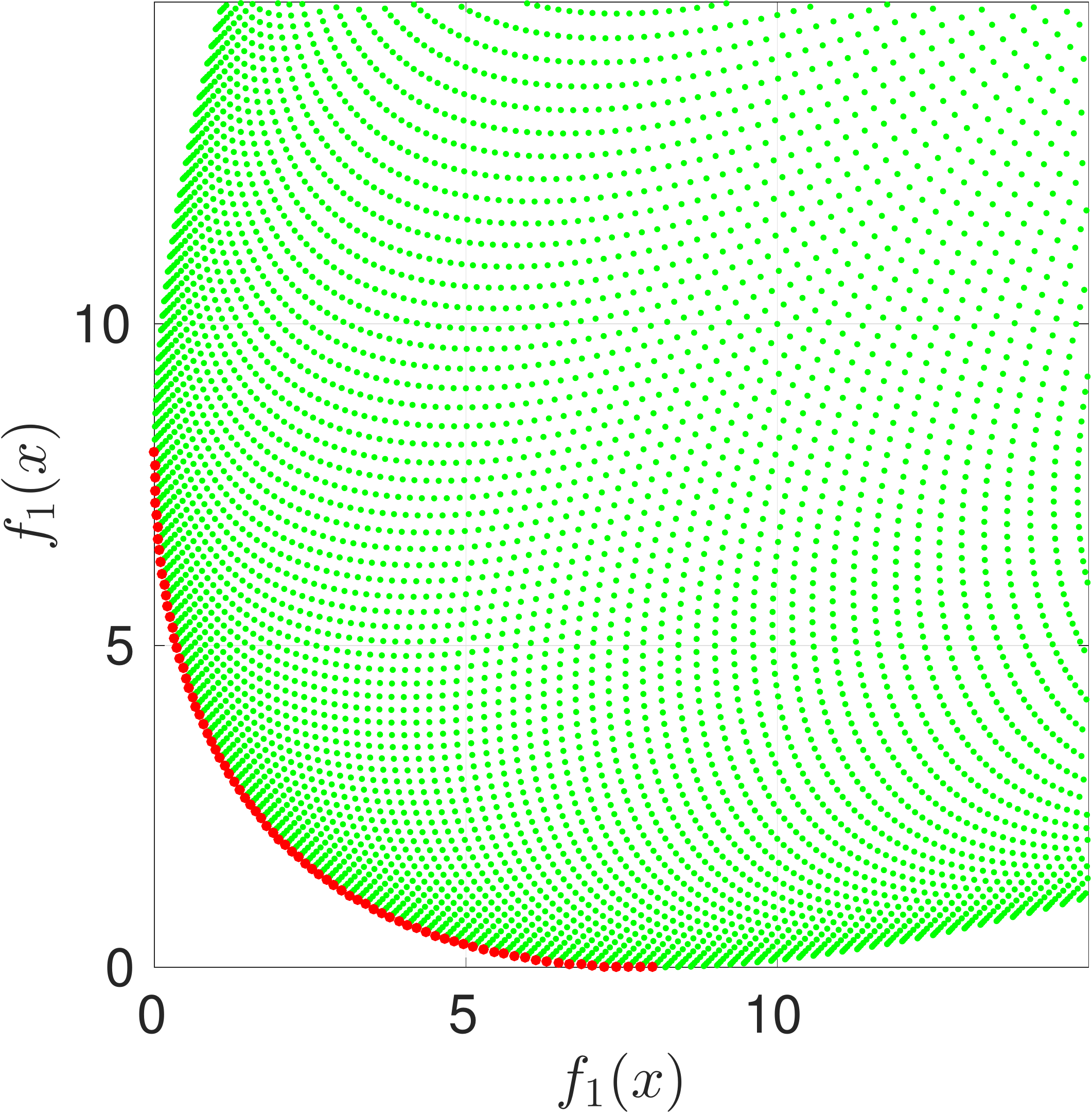}\\(b)}
	\caption{The red lines depict the Pareto set (a) and Pareto front (b) of an exemplary multiobjective optimization problem (two paraboloids) of the form \eqref{eq:MOP} with $n = 2$ and $k = 2$.}
	\label{fig:MOP_Example}
\end{figure}

Similar to single objective optimization, a necessary condition for optimality is based on the gradients of the objective functions. In the multiobjective situation, the corresponding Karush-Kuhn-Tucker (KKT) condition is as follows:
\begin{theorem}[\cite{KT51}]
	Let $x^*$ be a Pareto point of \eqref{eq:MOP}. Then, there exist nonnegative scalars $\alpha_1, \ldots, \alpha_k \ge 0$ such that
	\begin{align}
	\sum_{i=1}^k \alpha_i = 1 \text{ and } \sum_{i=1}^k \alpha_i \nabla f_i(x^*) = 0. \label{eq:MOP_optimality}
	\end{align}
\end{theorem}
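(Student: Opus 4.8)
The plan is to derive the condition from a first-order characterization of Pareto optimality combined with a theorem of the alternative. The starting observation is that if $x^*$ is Pareto optimal, then there can be no direction $d \in \R^n$ that is a simultaneous descent direction for all objectives, i.e.\ no $d$ satisfying $\nabla f_i(x^*)^\top d < 0$ for every $i \in \{1, \ldots, k\}$. The entire argument rests on turning this ``no common descent direction'' statement into the existence of the multipliers $\alpha_i$.

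First I would establish the necessary condition. Suppose, for contradiction, that such a $d$ exists. Since each $f_i$ is continuously differentiable, a first-order Taylor expansion gives $f_i(x^* + t d) = f_i(x^*) + t\,\nabla f_i(x^*)^\top d + o(t)$. Because $\nabla f_i(x^*)^\top d < 0$, there is a threshold $t_i > 0$ with $f_i(x^* + t d) < f_i(x^*)$ for all $0 < t < t_i$. Taking $t^* = \min_i t_i > 0$, which is possible since $k$ is finite, yields a point $x^* + t^* d$ with $F(x^* + t^* d) <_p F(x^*)$, so $x^* + t^* d$ dominates $x^*$, contradicting Pareto optimality.

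The decisive second step converts the absence of a common descent direction into the claimed multipliers. Collecting the gradients as the rows of a matrix $A \in \R^{k \times n}$ with $i$-th row $\nabla f_i(x^*)^\top$, the nonexistence of $d$ with $A d < 0$ (componentwise) is exactly the failure of one alternative in Gordan's theorem. The theorem then delivers the other alternative: there exists $\alpha \in \R^k$ with $\alpha \ge 0$, $\alpha \neq 0$, and $A^\top \alpha = \sum_{i=1}^k \alpha_i \nabla f_i(x^*) = 0$. Since $\alpha$ is nonnegative and nonzero, $\sum_i \alpha_i > 0$, so rescaling $\alpha \mapsto \alpha / \sum_i \alpha_i$ preserves nonnegativity and the linear relation while enforcing $\sum_i \alpha_i = 1$, which is precisely \eqref{eq:MOP_optimality}.

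The main obstacle is this theorem-of-the-alternative step, which one may equivalently phrase geometrically as showing $0 \in \mathrm{conv}\{\nabla f_i(x^*) : i = 1, \ldots, k\}$. If $0$ were not in this convex hull, the separating hyperplane theorem would produce a vector $d$ strictly separating $0$ from the compact convex hull, i.e.\ $\nabla f_i(x^*)^\top d < 0$ for all $i$, contradicting the first step. This separation argument is the heart of the proof: it is where the convexity of the hull of the gradients, rather than any property of the individual $f_i$ beyond differentiability, does the essential work.
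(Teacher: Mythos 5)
The paper never proves this theorem: it is quoted from Kuhn and Tucker \cite{KT51} as a known necessary condition, so there is no internal argument to compare yours against, and your proposal must stand as a self-contained proof. It does: your argument is correct and is the standard derivation. Pareto optimality rules out a direction $d$ with $\nabla f_i(x^*)^\top d < 0$ for all $i$ (first-order Taylor argument), and Gordan's theorem of the alternative---equivalently, strict separation of the origin from the compact convex set $\mathrm{conv}\{\nabla f_1(x^*),\ldots,\nabla f_k(x^*)\}$---converts this into nonnegative, nonzero multipliers with $\sum_{i=1}^k \alpha_i \nabla f_i(x^*) = 0$, which normalize to give \eqref{eq:MOP_optimality}. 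One cosmetic repair: with $t_i$ chosen so that $f_i(x^* + t d) < f_i(x^*)$ for all $0 < t < t_i$, you must take $t^*$ \emph{strictly} below $\min_i t_i$ (say $t^* = \tfrac{1}{2}\min_i t_i$), since at $t = \min_i t_i$ the strict inequality is not guaranteed for the index attaining the minimum. Beyond that, your proof makes explicit something the bare citation obscures: because \eqref{eq:MOP} is unconstrained, the multiplier rule is a Fritz John--type condition on the objectives alone, so no constraint qualification is required, and the only ingredients are continuous differentiability of the $f_i$ and finite-dimensional convex separation. The citation buys historical attribution and brevity; your route buys an elementary, fully self-contained justification that is exactly adapted to the setting the paper actually uses.
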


\noindent Observe that \eqref{eq:MOP_optimality} is only a necessary condition for a point $x^*$ 
to be a Pareto point and the set of points satisfying \eqref{eq:MOP_optimality} is called the set of {\em substationary points}
$\PSsub$. Under additional smoothness assumptions (see \cite{Hil01}) $\PSsub$ is
locally a $k-1$-dimensional manifold. Obviously $\PSsub$ is a superset of the Pareto set $\PS$.

If $x\not\in \PSsub$ then the KKT conditions can be utilized in order to identify a descent direction $q(x)$ for which all objectives are non-increasing, i.e.:
\begin{align}
	-\nabla f_i(x) \cdot q(x) \ge 0,\quad i=1, \ldots, k. \label{eq:condition_descent_all}
\end{align}
One way to compute a descent direction satisfying \eqref{eq:condition_descent_all} is to solve the following auxiliary optimization problem \cite{SSW02}:
\begin{align}
	\min_{\alpha \in \R^k} \left\lbrace \Big\| \sum_{i=1}^k \alpha_i \nabla f_i(x) \Big\|_2^2 \quad \Big| ~ \alpha_i \ge 0,~i=1, \ldots, k, \sum_{i=1}^k \alpha_i = 1 \right\rbrace. \tag{QOP} \label{eq:QOP}
\end{align}
Using \eqref{eq:QOP}, we obtain the following result:
\begin{theorem}[\cite{SSW02}] \label{th:SSW}
	Define $q: \R^n \rightarrow \R^n$ by
	\begin{align}
		q(x) = -\sum_{i=1}^k \alphahat_i \nabla f_i(x), \label{eq:q}
	\end{align}
	where $\alphahat$ is a solution of \eqref{eq:QOP}. Then either $q(x) = 0$ and $x$ satisfies \eqref{eq:MOP_optimality}, or $q(x)$ is a descent direction for all objectives $f_1(x), \ldots, f_k(x)$ in $x$. Moreover,
	$q(x)$ is locally Lipschitz continuous.
\end{theorem}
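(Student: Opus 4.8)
The plan is to prove the three assertions in turn: the characterisation when $q(x)=0$, the descent property when $q(x)\neq 0$, and the local Lipschitz continuity. The first is immediate. By construction $\alphahat$ is feasible for \eqref{eq:QOP}, so $\alphahat_i\ge 0$ and $\sum_{i=1}^k\alphahat_i=1$, and the hypothesis $q(x)=0$ reads $\sum_{i=1}^k\alphahat_i\nabla f_i(x)=0$. These are exactly the two relations in \eqref{eq:MOP_optimality}, so $x$ satisfies the optimality condition with $\alpha_i=\alphahat_i$, and nothing further is needed.

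For the descent property I would exploit the first-order optimality conditions of the convex program \eqref{eq:QOP}. Writing $g(\alpha)=\norm{\sum_{i=1}^k\alpha_i\nabla f_i(x)}_2^2$, this is a convex quadratic in $\alpha$ over the unit simplex $\Delta=\{\alpha:\alpha_i\ge 0,\ \sum_i\alpha_i=1\}$, which is convex; hence $\alphahat$ is a global minimiser if and only if the variational inequality $\nabla g(\alphahat)\cdot(\alpha-\alphahat)\ge 0$ holds for every $\alpha\in\Delta$. A direct computation gives $\partial g/\partial\alpha_l=2\,\nabla f_l(x)\cdot\big(\sum_i\alpha_i\nabla f_i(x)\big)$, so at $\alphahat$ one has $\partial g/\partial\alpha_l(\alphahat)=-2\,\nabla f_l(x)\cdot q(x)$. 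Testing the variational inequality with the vertices $\alpha=e_j$ of $\Delta$ (the direction $e_j-\alphahat$ is feasible because $\Delta$ is convex) and using $\sum_l\alphahat_l\nabla f_l(x)=-q(x)$ yields, after collecting terms, the sharpened inequality $-\nabla f_j(x)\cdot q(x)\ge\norm{q(x)}_2^2\ge 0$ for every $j$. This holds for all $x$ and is precisely \eqref{eq:condition_descent_all}; whenever $q(x)\neq 0$ the right-hand side is strictly positive, so $q(x)$ is a genuine descent direction for all objectives simultaneously, completing the dichotomy.

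The local Lipschitz continuity is the part I expect to be the main obstacle. The key reformulation is that $-q(x)$ is the element of smallest Euclidean norm in the convex polytope $C(x)=\mathrm{conv}\{\nabla f_1(x),\dots,\nabla f_k(x)\}$, that is, $-q(x)=\Pi_{C(x)}(0)$ is the projection of the origin onto $C(x)$. This already shows that $q(x)$ is well defined even when the minimiser $\alphahat$ of \eqref{eq:QOP} is not unique, since the projection onto a convex set is unique. To compare $y=-q(x)$ and $y'=-q(x')$ I would use their defining variational inequalities $\sprod{y}{z-y}\ge 0$ for all $z\in C(x)$ and $\sprod{y'}{z'-y'}\ge 0$ for all $z'\in C(x')$, inserting for $z,z'$ the points $J(x)\alpha'$ and $J(x')\alphahat$, where $J(x)=[\nabla f_1(x)\ \cdots\ \nabla f_k(x)]$ collects the gradients and $\alpha',\alphahat$ are optimal weights at $x',x$; this controls the Hausdorff distance of the two polytopes by $\max_i\norm{\nabla f_i(x)-\nabla f_i(x')}_2$, which is itself Lipschitz in $x$ once the gradients are assumed locally Lipschitz (as is implicit for this part of the statement). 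The difficulty is that the plain nonexpansiveness of the projection only delivers a Hölder estimate with exponent $1/2$; upgrading it to a genuine Lipschitz bound requires the polyhedral structure of $C(x)$ — for instance a local active-face analysis, where on each region of constant active set $y(x)$ is the projection onto a Lipschitz-varying affine subspace and hence Lipschitz, combined with a patching argument across active-set changes, or equivalently an appeal to the Lipschitz stability of solutions of parametric polyhedral variational inequalities. Throughout, the continuity of the gradients guarantees that $\norm{y}_2,\norm{y'}_2\le\max_i\norm{\nabla f_i(x)}_2$ stays locally bounded, so all constants can be taken uniform on a neighbourhood of any fixed $x$.
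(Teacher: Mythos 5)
The paper does not actually prove this theorem: it is imported verbatim from \cite{SSW02}, so there is no internal proof to compare against, and your attempt must be judged on its own merits. The first two parts are correct and complete. The $q(x)=0$ case is indeed immediate, and your variational-inequality argument for the descent case is sound: testing $\nabla g(\alphahat)\cdot(\alpha-\alphahat)\ge 0$ at the vertices $\alpha=e_j$ and using $\nabla g(\alphahat)\cdot\alphahat = 2\|q(x)\|_2^2$ gives the sharpened bound $-\nabla f_j(x)\cdot q(x)\ge\|q(x)\|_2^2$ for every $j$, which is stronger than \eqref{eq:condition_descent_all} and settles the dichotomy. Your side remark that the Lipschitz assertion implicitly requires locally Lipschitz gradients (the ambient $C^1$ assumption cannot suffice, as the case $k=1$, $q=-\nabla f_1$ already shows) is also correct and consistent with the stronger smoothness hypotheses under which \cite{SSW02} actually work.

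The gap is exactly where you flag it: the third part is a plan, not a proof. The reformulation $-q(x)=\Pi_{C(x)}(0)$ and the bound $d_h\bigl(C(x),C(x')\bigr)\le\max_i\|\nabla f_i(x)-\nabla f_i(x')\|_2$ are fine, and you correctly diagnose that the generic projection estimate $\|y-y'\|_2\le\bigl((\|y\|_2+\|y'\|_2)\,d_h\bigr)^{1/2}$ only yields H\"older continuity with exponent $1/2$. But neither of the two upgrades you gesture at is carried out, and neither is off-the-shelf. Standard Lipschitz-stability theorems for parametric variational inequalities require strong monotonicity, whereas here the operator $\alpha\mapsto J(x)^\top J(x)\alpha$ is merely monotone ($J^\top J$ is positive semidefinite and typically singular); indeed $\alphahat$ itself may be non-unique and can jump as $x$ varies, so any such result would have to be applied to the image $y=J(x)\alphahat$ rather than to $\alphahat$, and verifying that this is legitimate is itself a nontrivial argument. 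The active-face route has an analogous genuine difficulty at degenerate faces: when the active face is spanned by affinely dependent or nearly coincident gradients, the face-wise projection formula involves the unstable direction of the affine hull, whose derivative blows up as the face degenerates, and one must show that this blow-up is cancelled by the smallness of the accompanying coefficients --- that cancellation does occur, but establishing it is the actual content of the Lipschitz proof and is missing here, as is the finiteness/compatibility argument needed to patch the face-wise estimates into a single local Lipschitz constant. As written, your argument establishes continuity (indeed H\"older-$1/2$ stability) of $q$, but not the claimed local Lipschitz continuity.
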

\begin{remark}
As in the classical case of scalar optimization there exist in general infinitely many valid descent directions
$q(x)$ and using the result from Theorem~\ref{th:SSW} yields one particular direction. As already stated in the introduction, there are alternative ways to compute such a direction, see e.g.~\cite{FS00} for the computation of a single direction or \cite{Bos12}, where the entire set of descent directions is determined.
\end{remark}

\section{Multiobjective Optimization with Inexact Gradients}
\label{sec:InexactGradients}
Suppose now that we only have approximations $\ftilde_i(x), \nabla \ftilde_i(x)$ of the objectives $f_i(x)$ and their gradients $\nabla f_i(x)$, $i = 1, \ldots, k$, respectively. To be more precise we assume that
\begin{align}
	\ftilde_i(x) &= f_i(x) + \bar{\xi}_i,\quad &\|\ftilde_i(x) - f_i(x)\|_2 = \|\bar{\xi}_i\|_2 \le \xi_i, \label{eq:function_error}\\
	\nabla \ftilde_i(x) &= \nabla f_i(x) + \bar{\epsilon}_i, \quad &\|\nabla \ftilde_i(x) - \nabla f_i(x)\|_2 = \|\bar{\epsilon}_i\|_2 \le \epsilon_i, \label{eq:gradient_error}
\end{align}
where the upper bounds $\xi_i, \epsilon_i$ are given. 
In the following, when computing descent directions we will assume that
\[
\epsilon_i \le \|\nabla f_i(x)\|_2 \quad \mbox{for all $x$,}
\]
since otherwise, we are already in the vicinity of the set of stationary points as will be shown in Lemma~\ref{lemma:norm_q_tilde}. 
Using \eqref{eq:gradient_error} we can derive an upper bound for the angle between the exact and the inexact gradient by elementary geometrical considerations (cf. Figure~\ref{fig:angle_gradient} (a)):
\begin{align}
	\sphericalangle(\nabla f_i(x), \nabla \ftilde_i(x)) = \arcsin\left( \frac{\|\bar{\epsilon}_i\|_2}{\|\nabla f_i(x)\|_2} \right) \leq \arcsin\left( \frac{\epsilon_i}{\|\nabla f_i(x)\|_2} \right) =: \varphi_i. \label{eq:def_phi}
\end{align}
Here we denote the largest possible angle (i.e.~the ``worst case'') by $\varphi_i$. Based on this angle, one can easily define a condition for the inexact gradients such that the exact gradients could satisfy \eqref{eq:MOP_optimality} for the first time. This is precisely the case if each inexact gradient deviates from the hyperplane defined by \eqref{eq:MOP_optimality} at most by $\varphi_i$, see Figure~\ref{fig:angle_gradient} (b). This motivates the definition of an \emph{inexact descent direction}:
\begin{definition}\label{def:InexactDescentDirection}
	A direction analog to \eqref{eq:q} but based on inexact gradients, i.e.
	\begin{align}
		\qr(x) = -\sum_{i=1}^k \alphahat_i \nabla \ftilde_i(x) \quad \mbox{with $\alphahat_i \ge 0$ for $i=1, \ldots, k$ and $\sum_{i=1}^k \alphahat_i = 1$}, \label{eq:q_tilde}
	\end{align}
	is called \emph{inexact descent direction}.
\end{definition}
We can prove an upper bound for the norm of $\qr(x^*)$ when $x^*$ satisfies the KKT condition of the exact problem:
\begin{lemma}\label{lemma:norm_q_tilde}
	Consider the multiobjective optimization problem \eqref{eq:MOP} with inexact gradient information according to \eqref{eq:gradient_error}. 
	Let $x^*$ be a point satisfying the KKT conditions \eqref{eq:MOP_optimality} for the exact problem. Then, the inexact descent direction $\qr(x^*)$ is bounded by
	\[
		\|\qr(x^*)\|_2 \le \| \epsilon \|_{\infty}. 
	\]
\end{lemma}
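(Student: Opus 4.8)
The plan is to exploit the variational, minimum-norm characterization that defines $\qr(x^*)$. In analogy with Theorem~\ref{th:SSW}, the coefficients $\alphahat$ in \eqref{eq:q_tilde} are obtained as a solution of the auxiliary problem \eqref{eq:QOP} set up with the inexact gradients $\nabla\ftilde_i$ in place of $\nabla f_i$. Consequently $\|\qr(x^*)\|_2$ equals the minimum of $\|\sum_i \alpha_i \nabla\ftilde_i(x^*)\|_2$ over the simplex $\{\alpha_i \ge 0,\ \sum_i \alpha_i = 1\}$, so that any feasible choice of coefficients yields an upper bound for it. The key idea is to take as this witness precisely the KKT multipliers $\alpha_1,\dots,\alpha_k$ guaranteed by \eqref{eq:MOP_optimality} at $x^*$, which themselves satisfy the simplex constraints.

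With this witness fixed, I would insert $\nabla\ftilde_i(x^*) = \nabla f_i(x^*) + \bar\epsilon_i$ from \eqref{eq:gradient_error}. The exact part $\sum_i \alpha_i \nabla f_i(x^*)$ then vanishes by the KKT condition, leaving only the accumulated gradient error:
\[
\|\qr(x^*)\|_2 \le \Big\| \sum_{i=1}^k \alpha_i \nabla\ftilde_i(x^*) \Big\|_2 = \Big\| \sum_{i=1}^k \alpha_i \bar\epsilon_i \Big\|_2.
\]
The claim then follows from the triangle inequality, the per-gradient bounds $\|\bar\epsilon_i\|_2 \le \epsilon_i$, and the observation that $\sum_i \alpha_i \epsilon_i$ is a convex combination of the $\epsilon_i$ and is therefore dominated by $\max_i \epsilon_i = \|\epsilon\|_\infty$.

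The step that genuinely drives the argument, and the one I would flag as the crux, is the very first one: recognizing that $\qr(x^*)$ is the \emph{minimizer} of the inexact version of \eqref{eq:QOP}, so that substituting the exact KKT multipliers is a legitimate way to bound the true minimizer from above. For an arbitrary inexact descent direction in the sense of Definition~\ref{def:InexactDescentDirection} this would fail, since then $\sum_i \alphahat_i \nabla f_i(x^*)$ need not vanish and the norm could be large. Everything after this reduction is routine, and notably neither a smoothness assumption nor the function-value errors $\xi_i$ enter the estimate at all.
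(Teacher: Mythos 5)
Your proof is correct, and its computational core is identical to the paper's: decompose $\nabla \ftilde_i(x^*) = \nabla f_i(x^*) + \bar{\epsilon}_i$ via \eqref{eq:gradient_error}, annihilate the exact part using \eqref{eq:MOP_optimality}, and bound the residual $\|\sum_i \alpha_i \bar{\epsilon}_i\|_2$ by the triangle inequality and the convex-combination estimate $\sum_i \alpha_i \epsilon_i \le \|\epsilon\|_\infty$. Where you differ is the logical frame around that computation. The paper simply takes the coefficients $\alphahat_i$ appearing in $\qr(x^*)$ to \emph{be} the KKT multipliers of the exact problem at $x^*$; with that identification the chain of inequalities is the entire proof and no variational argument is needed. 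You instead read $\qr(x^*)$ as the minimizer of the inexact analogue of \eqref{eq:QOP} and bound the minimum from above by its value at the feasible point given by the exact multipliers. Your reading buys something real: it establishes the bound for the direction the algorithm actually computes (the solution of \eqref{eq:QOP} with $\nabla \ftilde_i$ in place of $\nabla f_i$), whose weights are generally \emph{not} the exact KKT multipliers and could not be chosen as such without knowing the exact gradients; the paper's reading only covers a direction assembled from those unknown multipliers, which is weaker as a statement about the algorithm but requires no minimum-norm structure. Your closing observation is also correct and worth keeping: for an arbitrary direction in the sense of Definition~\ref{def:InexactDescentDirection} the bound genuinely fails (take $\nabla f_1(x^*) = -\nabla f_2(x^*) \neq 0$, so that $x^*$ satisfies \eqref{eq:MOP_optimality} with weights $(\tfrac12,\tfrac12)$, and choose $\alphahat = (1,0)$, giving $\|\qr(x^*)\|_2 \ge \|\nabla f_1(x^*)\|_2 - \epsilon_1$), so the lemma is true only for particular choices of the weights such as yours or the paper's; this pinpoints an imprecision in how the statement and Definition~\ref{def:InexactDescentDirection} fit together.
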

\begin{proof}
	Since $x^*$ satisfies \eqref{eq:MOP_optimality}, we have $\sum_{i=1}^k\alphahat_i \nabla f_i(x^*) = 0$. Consequently,
	\[
		\sum_{i=1}^k\alphahat_i \nabla \ftilde_i(x) = \sum_{i=1}^k \alphahat_i \left(\nabla f_i(x) + \bar{\epsilon}_i\right) = \sum_{i=1}^k \alphahat_i \bar{\epsilon}_i,
	\]
	and thus,
	\begin{align*}
		\Big\|\sum_{i=1}^k\alphahat_i \nabla \ftilde_i(x) \Big\|_2 &= \Big\|\sum_{i=1}^k\alphahat_i \bar{\epsilon}_i \Big\|_2 \le \sum_{i=1}^k\alphahat_i \|\bar{\epsilon}_i \|_2 \le \sum_{i=1}^k\alphahat_i \epsilon_i \\
		&\le \sum_{i=1}^k\alphahat_i \| \epsilon \|_{\infty} = \| \epsilon \|_{\infty} \sum_{i=1}^k\alphahat_i = \| \epsilon \|_{\infty}.
	\end{align*}
\end{proof}

\begin{figure}[htb]
	\centering
	\parbox[b]{0.6\textwidth}{\centering \includegraphics[width=0.6\textwidth]{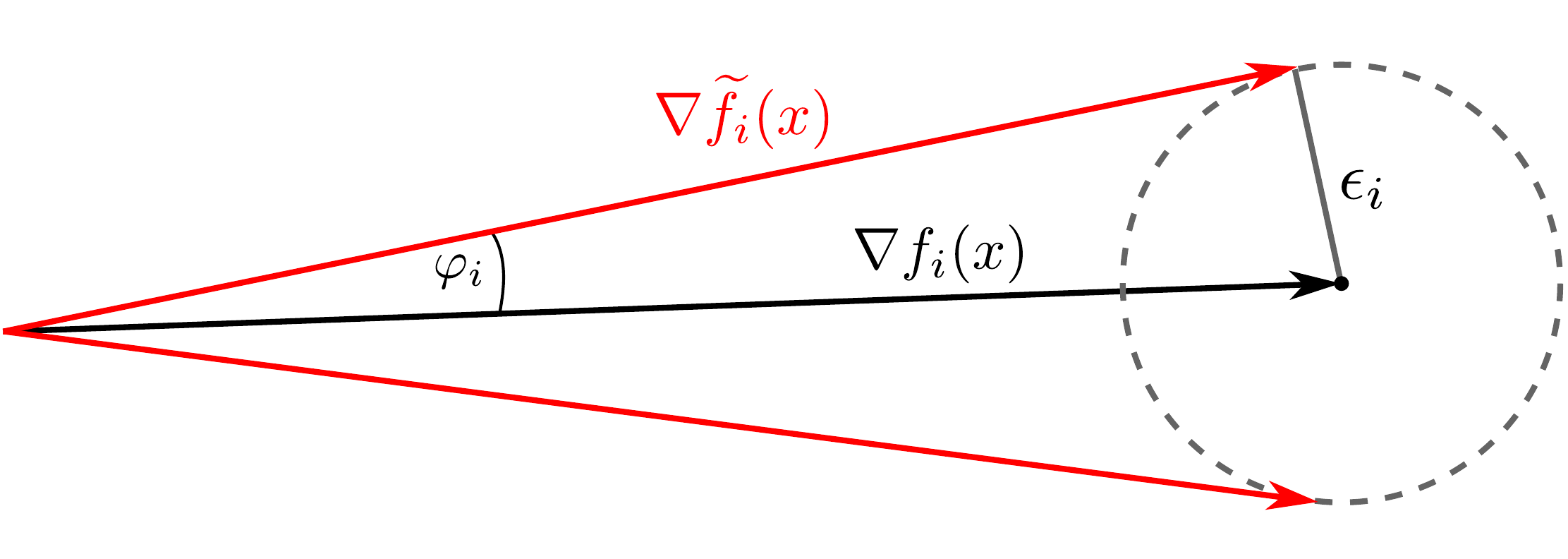}\\(a)}\\~\\~\\
	\parbox[b]{0.6\textwidth}{\centering \includegraphics[width=0.6\textwidth]{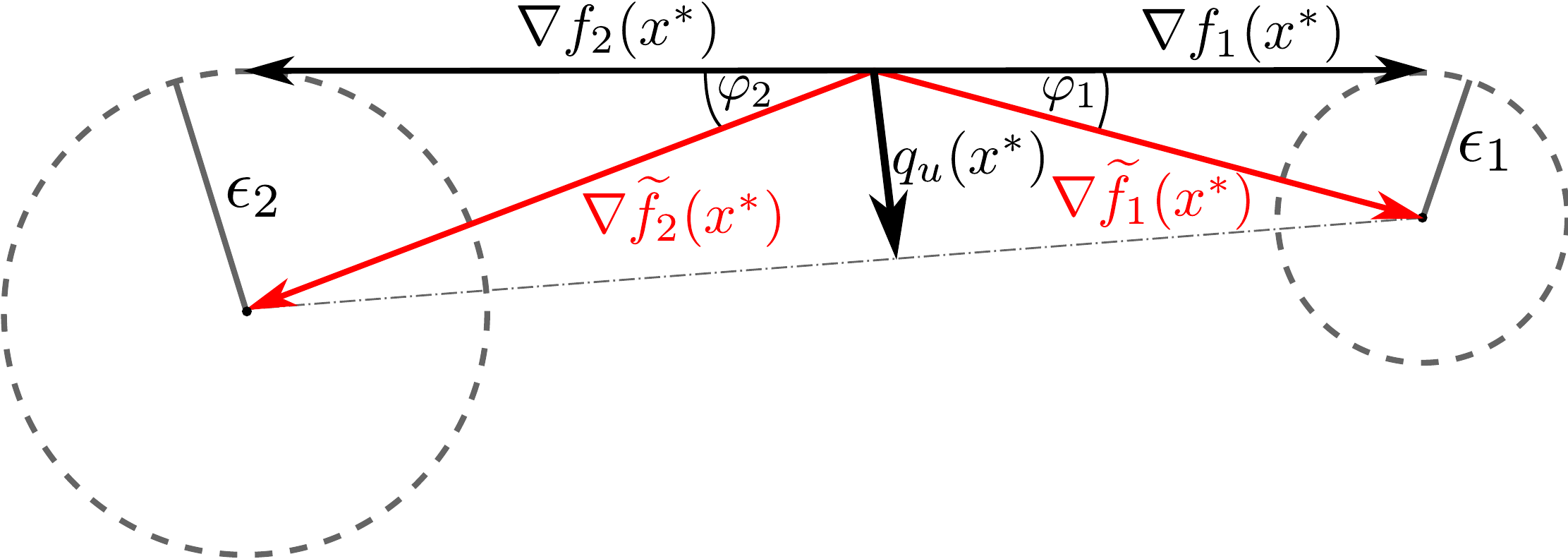}\\(b)}
	\caption{(a) Maximal angle between the exact and the inexact gradient in dependence on the error $\epsilon_i$. (b) Maximal angle between the inexact gradients in the situation where $x^*$ satisfies \eqref{eq:MOP_optimality} ($\pi - (\varphi_1 + \varphi_2)$ in the 2D case).}
	\label{fig:angle_gradient}
\end{figure}

A consequence of Lemma~\ref{lemma:norm_q_tilde} is that in the presence of inexactness, we cannot compute the set of points satisfying \eqref{eq:MOP_optimality} exactly. At best, we can compute the set of points determined by $\|\qr(x)\|_2 \le \| \epsilon \|_{\infty}$. In the following section we will derive a criterion for the inexact descent direction $\qr(x)$ which guarantees that it is also a descent direction for the exact problem when $x$ is sufficiently far away from the set of substationary points.

\subsection{Descent Directions in the Presence of Inexactness}
\label{subsec:InexactDescentDirection}
The set of valid descent directions for \eqref{eq:MOP} is a cone defined by the intersection of all half-spaces orthogonal to the gradients $\nabla f_1(x), \ldots, \nabla f_k(x)$ (cf.~Figure \ref{fig:angle_descent_set} (a)), i.e.~it consists of all directions $q(x)$ satisfying
\begin{align}
	\sphericalangle(q(x), -\nabla f_i(x)) \leq \frac{\pi}{2},\quad i = 1, \ldots, k. \label{eq:descent_cone_exact}
\end{align}
This fact is well known from scalar optimization theory, see e.g.~\cite{NW06}. Observe that here we allow directions
also to be valid for which all objectives are at least non-decreasing. 
In terms of the angle $\gamma_i \in [0, \pi/2]$ between the descent direction $q(x)$ and the hyperplane orthogonal to the gradient of the $i^{\mathsf{th}}$ objective, this can be expressed as
\begin{align}
	\gamma_i = \frac{\pi}{2} - \arccos\left( \frac{q(x) \cdot \left(-\nabla f_i(x)\right)}{\|q(x)\|_2 \cdot \|\nabla f_i(x)\|_2} \right) \geq 0,\quad i= 1, \ldots, k. \label{eq:def_gamma}
\end{align}
\begin{remark}\label{rem:gamma}
	Note that $\gamma_i$ is equal to $\pi/2$ when $q(x) = \nabla f_i(x)$ and approaches zero when a point $x$ approaches the set of substationary points, i.e.
	\[
		\lim_{\|q(x)\|_2 \rightarrow 0} \gamma_i \rightarrow 0 \quad \mbox{for $i = 1, \ldots, k$}.
	\]
\end{remark}

We call the set of all descent directions satisfying \eqref{eq:descent_cone_exact} the \emph{exact cone} $\SetQ$. If we consider inexact gradients according to \eqref{eq:gradient_error} then $\SetQ$ is reduced to the \emph{inexact cone} $\SetQr$ by the respective upper bounds for the angular deviation $\varphi_i$:
\begin{align}
	\gamma_i \geq \varphi_i \geq 0, \label{eq:gamma_i}\quad i= 1, \ldots, k.
\end{align}
This means that we require the angle between the descent direction $q(x)$ and the $i^{\mathsf{th}}$ hyperplane to be at least as large as the maximum deviation between the exact and the inexact gradient (cf. Figure~\ref{fig:angle_descent_set} (b)).
Thus, if an inexact descent direction $\qr(x)$ satisfies \eqref{eq:gamma_i}, then it is also a descent
direction for the exact problem. 
\begin{figure}[htb]
	\centering
	\parbox[b]{0.49\textwidth}{\centering \includegraphics[width=0.45\textwidth]{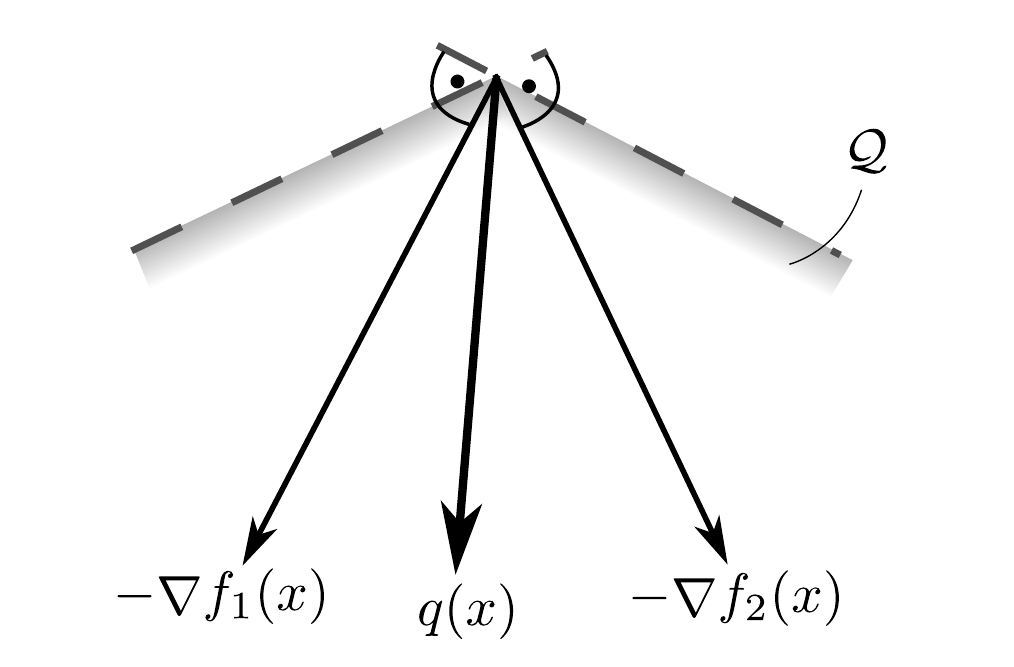}\\(a)}
	\parbox[b]{0.49\textwidth}{\centering \includegraphics[width=0.45\textwidth]{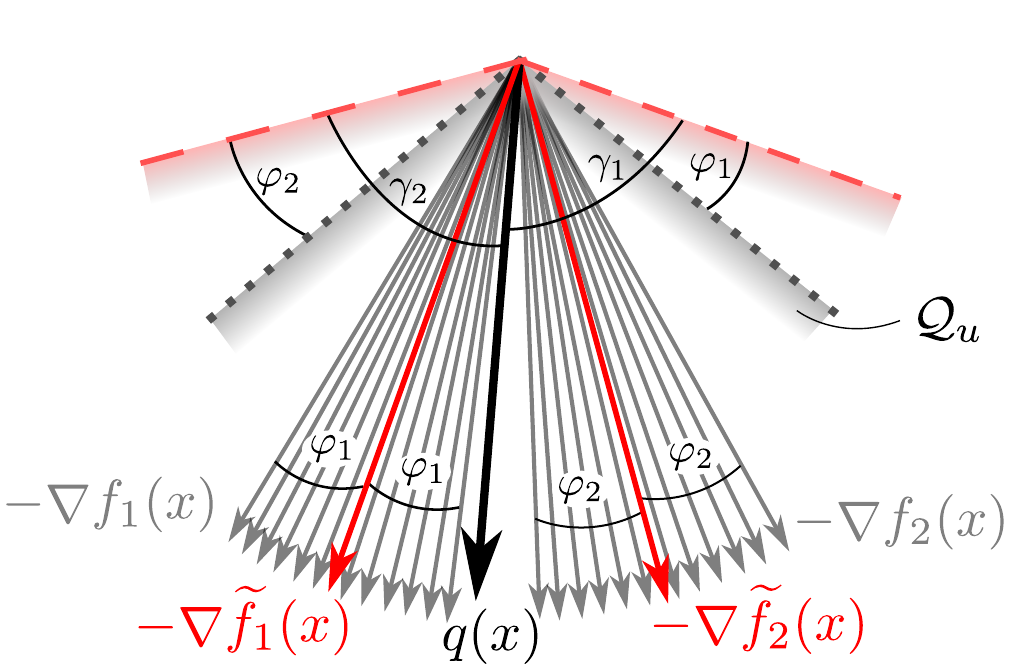}\\(b)}
	\caption{(a) Set of valid descent directions (the exact cone $\SetQ$ bounded by the dashed lines) determined by the intersection of half-spaces defined by the negative gradients. (b) Reduction of the set of valid descent directions in dependence on the errors $\epsilon_i$ (the inexact cone $\SetQr$ bounded by the dotted lines). The gray vectors represent the set of possible values of the exact gradients $\nabla f_i(x)$ and the inexact cone $\SetQr$ is defined by the ``most aligned'' (here the uppermost) realizations of $\nabla f_i(x)$.}
	\label{fig:angle_descent_set}
\end{figure}

We would like to derive an algorithm by which we can determine an inexact descent direction $\qr(x)$ in such a way that it is also valid for the exact problem. To this end, we derive an additional criterion which is equivalent to \eqref{eq:gamma_i}. Concretely, we prove the following lemma:
\begin{lemma} \label{lemma:alpha_min}
	Consider the multiobjective optimization problem \eqref{eq:MOP} with inexact gradient information according to \eqref{eq:gradient_error}. Let
	$\qr(x)$ be an inexact descent direction according to Definition~\ref{def:InexactDescentDirection}. 
	We assume $\|\qr(x)\|_2 \neq 0$, $\|\nabla \ftilde_i(x)\|_2 \neq 0, i = 1, \ldots,  k$.
	Then \eqref{eq:gamma_i} is equivalent to 
	\begin{align}
	\alphahat_{i} &\ge \frac{1}{\|\nabla \ftilde_i(x)\|_2^2} \left(\|\qr(x)\|_2 \epsilon_i - \sum_{\substack{j=1\\j\neq i}}^k \alphahat_j \left(\nabla \ftilde_j(x) \cdot \nabla \ftilde_i(x)\right)\right),
	\quad i=1, \ldots, k.\label{eq:alphahat}
	\end{align}
	In particular, $\qr(x)$ is a descent direction for all objective functions $f_i(x)$
	if \eqref{eq:alphahat} is satisfied.
\end{lemma}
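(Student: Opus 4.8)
The plan is to convert the angular membership condition \eqref{eq:gamma_i} into an equivalent inner-product inequality and then read off the bound on $\alphahat_i$ by substituting the definition \eqref{eq:q_tilde} of $\qr(x)$. First I would rewrite $\gamma_i$ using \eqref{eq:def_gamma} with the inexact gradients in place of the exact ones. Writing $\theta_i \in [0,\pi]$ for the angle between $\qr(x)$ and $-\nabla\ftilde_i(x)$, the definition gives $\gamma_i = \pi/2 - \theta_i$, so that
\[
\sin\gamma_i = \cos\theta_i = \frac{\qr(x)\cdot(-\nabla\ftilde_i(x))}{\|\qr(x)\|_2\,\|\nabla\ftilde_i(x)\|_2}.
\]
The hypotheses $\|\qr(x)\|_2\neq 0$ and $\|\nabla\ftilde_i(x)\|_2\neq 0$ guarantee that this quotient is well defined and lies in $[-1,1]$, and they place $\gamma_i$ in $[-\pi/2,\pi/2]$.

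Next I would pass from the angle inequality to a scalar inequality. Since $\gamma_i\in[-\pi/2,\pi/2]$ and $\varphi_i\in[0,\pi/2]$ both lie in $[-\pi/2,\pi/2]$, where $\sin$ is strictly increasing, the condition $\gamma_i \ge \varphi_i$ is equivalent to $\sin\gamma_i \ge \sin\varphi_i$. Using the worst-case (``most aligned realization'') characterization of $\varphi_i$ from \eqref{eq:def_phi} and Figure~\ref{fig:angle_descent_set}~(b), which amounts to $\sin\varphi_i\,\|\nabla\ftilde_i(x)\|_2 = \epsilon_i$, this reads
\[
\frac{\qr(x)\cdot(-\nabla\ftilde_i(x))}{\|\qr(x)\|_2\,\|\nabla\ftilde_i(x)\|_2} \ge \frac{\epsilon_i}{\|\nabla\ftilde_i(x)\|_2}.
\]
Multiplying through by the strictly positive quantity $\|\qr(x)\|_2\,\|\nabla\ftilde_i(x)\|_2$ collapses this to the equivalent linear inequality $\qr(x)\cdot(-\nabla\ftilde_i(x)) \ge \|\qr(x)\|_2\,\epsilon_i$.

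Finally I would substitute $\qr(x) = -\sum_{j=1}^k \alphahat_j\nabla\ftilde_j(x)$, so that the left-hand side becomes $\sum_{j=1}^k \alphahat_j\,(\nabla\ftilde_j(x)\cdot\nabla\ftilde_i(x))$. Splitting off the $j=i$ term, which equals $\alphahat_i\|\nabla\ftilde_i(x)\|_2^2$, moving the remaining sum to the right, and dividing by $\|\nabla\ftilde_i(x)\|_2^2 > 0$ yields exactly \eqref{eq:alphahat}. Since every step in this chain is reversible, \eqref{eq:gamma_i} and \eqref{eq:alphahat} are equivalent. The ``in particular'' claim then follows because \eqref{eq:gamma_i} holding for all $i$ places $\qr(x)$ in the inexact cone $\SetQr\subseteq\SetQ$, which was already argued to guarantee that $\qr(x)$ is a descent direction for all exact objectives.

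The step I expect to be the main obstacle is the trigonometric bookkeeping in the first two paragraphs: identifying $\sin\gamma_i$ with the normalized inner product via the complementary-angle relation, and justifying the monotone passage $\gamma_i \ge \varphi_i \Leftrightarrow \sin\gamma_i \ge \sin\varphi_i$, both of which hinge on keeping the angles in $[-\pi/2,\pi/2]$ and on the nonvanishing hypotheses. A secondary point requiring care is the identity $\sin\varphi_i\,\|\nabla\ftilde_i(x)\|_2 = \epsilon_i$ supplied by the worst-case realization, which is precisely what makes $\epsilon_i$ (rather than a ratio) appear in \eqref{eq:alphahat}. Once the condition is in the linear form $\qr(x)\cdot(-\nabla\ftilde_i(x)) \ge \|\qr(x)\|_2\,\epsilon_i$, the remaining algebra is routine.
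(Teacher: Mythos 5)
Your proposal is correct and follows essentially the same route as the paper's proof: both convert the angular condition \eqref{eq:gamma_i} into the normalized inner-product inequality $\qr(x)\cdot(-\nabla\ftilde_i(x))/(\|\qr(x)\|_2\,\|\nabla\ftilde_i(x)\|_2) \ge \sin\varphi_i = \epsilon_i/\|\nabla\ftilde_i(x)\|_2$ and then substitute the definition of $\qr(x)$, split off the $j=i$ term, and divide by $\|\nabla\ftilde_i(x)\|_2^2$. Your extra bookkeeping (monotonicity of $\sin$ on $[-\pi/2,\pi/2]$, and flagging that $\sin\varphi_i\,\|\nabla\ftilde_i(x)\|_2 = \epsilon_i$ uses the inexact gradient norm rather than the exact one appearing in \eqref{eq:def_phi}) only makes explicit steps the paper performs implicitly.
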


\begin{proof}
	Inserting the expression for $\gamma_i$ in \eqref{eq:def_gamma} into \eqref{eq:gamma_i} yields
	\begin{align}
	\frac{\qr(x) \cdot \left(-\nabla \ftilde_i(x)\right)}{\|\qr(x)\|_2 \cdot \|\nabla \ftilde_i(x)\|_2} \ge \cos\left(\frac{\pi}{2} - \varphi_i\right) = \sin\left(\varphi_i\right) = \frac{\epsilon_i}{\|\nabla \ftilde_i(x)\|_2}. \label{eq:bound_angle_q_gradf}
	\end{align}
	Using the definition of $\qr(x)$ this is equivalent to
	\begin{align}
	\frac{\left(\sum_{j=1}^k \alphahat_j \nabla \ftilde_j(x)\right) \cdot \nabla \ftilde_i(x)}{\|\qr(x)\|_2 \ \|\nabla \ftilde_i(x)\|_2} &= \frac{\alphahat_i \|\nabla \ftilde_i(x)\|_2^2 + \sum_{j=1,j\neq i}^k \alphahat_j \nabla \ftilde_j(x) \cdot \nabla \ftilde_i(x)}{\|\qr(x)\|_2 \ \|\nabla \ftilde_i(x)\|_2} \notag \\
	&= \frac{\|\nabla \ftilde_i(x)\|_2}{\|\qr(x)\|_2} \alphahat_i + \frac{\sum_{j=1,j\neq i}^k \alphahat_j \nabla \ftilde_j(x) \cdot \nabla \ftilde_i(x)}{\|\qr(x)\|_2 \ \|\nabla \ftilde_i(x)\|_2} \notag \\
	&\ge \frac{\epsilon_i}{\|\nabla \ftilde_i(x)\|_2}\, \notag \\
	\Longleftrightarrow \quad \alphahat_{i} \ge \frac{1}{\|\nabla \ftilde_i(x)\|_2^2} &\left(\|\qr(x)\|_2 \epsilon_i - \sum_{\substack{j=1\\j\neq i}}^k \alphahat_j \left(\nabla \ftilde_j(x) \cdot \nabla \ftilde_i(x)\right)\right). \notag
	\end{align}
\end{proof}

\begin{remark} \label{rem:no_error}
	By setting $\epsilon_i = 0$ (i.e.~$\ftilde_i(x) = {f}_i(x)$ for $i=1,\ldots,k$) in \eqref{eq:alphahat} and performing some elemental manipulations, we again obtain the condition \eqref{eq:condition_descent_all} for an exact descent direction:
	\begin{align*}
	\alphahat_{i} &\ge \frac{1}{\|\nabla f_i(x)\|_2^2} \left(- \sum_{\substack{j=1\\j\neq i}}^k \alphahat_j \left(\nabla f_j(x) \cdot \nabla f_i(x)\right)\right) \\
	\Leftrightarrow \alphahat_{i} \left(\nabla f_i(x) \cdot \nabla f_i(x)\right) &\ge \left(- \sum_{\substack{j=1\\j\neq i}}^k \alphahat_j \left(\nabla f_j(x) \cdot \nabla f_i(x)\right)\right) \\
	\Leftrightarrow -\nabla {f}_i(x) \cdot {q}(x) &\ge 0.
	\end{align*}
\end{remark}
The condition \eqref{eq:alphahat} can be interpreted as a lower bound for the ``impact'' of a particular gradient on the descent direction induced by $\widehat\alpha_i$. The larger the error $\epsilon_i$, the higher the impact of the corresponding gradient needs to be in order to increase the angle between the descent direction and the hyperplane normal to the gradient. The closer a point $x$ is to the Pareto front (i.e.~for small values of $\|q(x)\|_2$), the more confined the region of possible descent directions becomes. Hence, the inaccuracies gain influence until it is no longer possible to guarantee the existence of a descent direction for every objective. This is the case when the sum over the lower bounds from \eqref{eq:alphahat} exceeds one as shown in part (a) of the following result:
\begin{theorem}
	\label{th:inexact_descent}
	Consider the multiobjective optimization problem \eqref{eq:MOP} and suppose that the assumptions
	in Lemma~\ref{lemma:alpha_min} hold. Let
	\[
	\alphahat_{min,i} = \frac{1}{\|\nabla \ftilde_i(x)\|_2^2} \left(\|\qr(x)\|_2 \epsilon_i - \sum_{\substack{j=1\\j\neq i}}^k \alphahat_j \left(\nabla \ftilde_j(x) \cdot \nabla \ftilde_i(x)\right)\right),
	\quad i=1, \ldots, k.
	\]
	Then the following statements are valid:
	\begin{itemize}
		\item[(a)] If $\sum_{i=1}^k \alphahat_{min,i} > 1$ then $\SetQr = \emptyset$
		(see Figure \ref{fig:angle_descent_set} (b)), and
		therefore it cannot be guaranteed that there is a descent direction for all objective functions.
		\item[(b)] All points $x$ with $\sum_{i=1}^k \alphahat_{min,i} = 1$
		are contained in the set
		\begin{align}
			\PSeps = \left\lbrace x \in \R^n ~\Big|~ \Big\|\sum_{i=1}^k\alphahat_i \nabla f_i(x) \Big\|_2 \le 2\| \epsilon \|_{\infty} \right\rbrace. \label{eq:PS_epsilon}
		\end{align}
	\end{itemize}
\end{theorem}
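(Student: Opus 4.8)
The plan is to reduce both statements to the feasibility characterization from Lemma~\ref{lemma:alpha_min}: an inexact descent direction $\qr(x)$, parametrized by its coefficients $\alphahat$ with $\alphahat_i \ge 0$ and $\sum_{i=1}^k \alphahat_i = 1$, lies in $\SetQr$ (equivalently, satisfies \eqref{eq:gamma_i}) if and only if $\alphahat_i \ge \alphahat_{min,i}$ for every $i$. For part (a) I would then simply sum these $k$ membership inequalities. Using the simplex constraint $\sum_{i=1}^k \alphahat_i = 1$, this yields the necessary condition $\sum_{i=1}^k \alphahat_{min,i} \le 1$ for any admissible coefficient vector. Taking the contrapositive, as soon as $\sum_{i=1}^k \alphahat_{min,i} > 1$ no vector on the simplex can meet all $k$ lower bounds simultaneously, hence $\SetQr = \emptyset$ (cf.~Figure~\ref{fig:angle_descent_set}(b)) and the existence of a common descent direction can no longer be guaranteed.

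For part (b) I would work exactly at the threshold $\sum_{i=1}^k \alphahat_{min,i} = 1$, where the only way to satisfy all bounds on the simplex is with every constraint active, i.e.~$\alphahat_i = \alphahat_{min,i}$ for all $i$. Clearing the denominator $\|\nabla \ftilde_i(x)\|_2^2$ in \eqref{eq:alphahat} and observing that $\alphahat_i \|\nabla \ftilde_i(x)\|_2^2 + \sum_{j\neq i}\alphahat_j(\nabla \ftilde_j(x)\cdot\nabla \ftilde_i(x)) = \big(\sum_{j}\alphahat_j\nabla \ftilde_j(x)\big)\cdot\nabla \ftilde_i(x) = -\qr(x)\cdot\nabla \ftilde_i(x)$, the active constraints collapse to the clean relations $-\qr(x)\cdot\nabla \ftilde_i(x) = \|\qr(x)\|_2\,\epsilon_i$ for $i=1,\dots,k$. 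Multiplying the $i$-th relation by $\alphahat_i$, summing, and using $\sum_{i}\alphahat_i\nabla \ftilde_i(x) = -\qr(x)$ turns the left-hand side into $\|\qr(x)\|_2^2$, while the right-hand side equals $\|\qr(x)\|_2\sum_{i}\alphahat_i\epsilon_i \le \|\qr(x)\|_2\,\|\epsilon\|_{\infty}$ (the same estimate $\sum_{i}\alphahat_i\epsilon_i\le\|\epsilon\|_{\infty}$ used in Lemma~\ref{lemma:norm_q_tilde}). Dividing by $\|\qr(x)\|_2$ gives the key bound $\|\qr(x)\|_2 \le \|\epsilon\|_{\infty}$. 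It then remains to pass from the inexact to the exact combined gradient: writing $\sum_{i}\alphahat_i\nabla f_i(x) = \sum_{i}\alphahat_i\big(\nabla \ftilde_i(x) - \bar{\epsilon}_i\big) = -\qr(x) - \sum_{i}\alphahat_i\bar{\epsilon}_i$ and applying the triangle inequality together with \eqref{eq:gradient_error} yields $\big\|\sum_{i}\alphahat_i\nabla f_i(x)\big\|_2 \le \|\qr(x)\|_2 + \|\epsilon\|_{\infty} \le 2\|\epsilon\|_{\infty}$, which is precisely membership in $\PSeps$ as defined in \eqref{eq:PS_epsilon}.

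The two summation identities and the final triangle-inequality bookkeeping are routine. The step I expect to require the most care is the self-referential nature of $\alphahat_{min,i}$: it depends, through $\|\qr(x)\|_2$ and the cross terms $\nabla \ftilde_j(x)\cdot\nabla \ftilde_i(x)$, on the very coefficients $\alphahat$ it constrains. I therefore need to justify, at the threshold $\sum_{i}\alphahat_{min,i}=1$, that one may legitimately take all constraints active and treat $\alphahat = \alphahat_{min}$ as a genuine point of the simplex (nonnegativity and consistency of the resulting nonlinear system), rather than as a purely formal substitution; likewise, for part (a) the honest reading is that $\sum_{i}\alphahat_{min,i}$ is evaluated at the coefficient vector actually produced by the constrained auxiliary problem. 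This is the one place where the geometry of the shrinking cone in Figure~\ref{fig:angle_descent_set}(b), rather than pure algebra, enters the argument.
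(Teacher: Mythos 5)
Your proposal is correct, but it does not follow the paper's route throughout, so a comparison is in order. For part (a) you and the paper do essentially the same thing: the paper expands $\sum_{i=1}^k\alphahat_{min,i}>1$ algebraically (using $\sum_{j\neq i}\alphahat_j\nabla\ftilde_j(x) = -\qr(x)-\alphahat_i\nabla\ftilde_i(x)$ and $\sum_{i=1}^k\alphahat_i=1$) into $\sum_{i=1}^k\sin\varphi_i>\sum_{i=1}^k\sin\gamma_i$ and contradicts \eqref{eq:gamma_i}; your summing of the membership inequalities $\alphahat_i\ge\alphahat_{min,i}$ against the simplex constraint is the same contradiction without unwinding the trigonometry. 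For part (b), however, you take a genuinely different and in fact tighter route. The paper argues geometrically: equality of the sine sums forces $\gamma_i=\varphi_i$ for all $i$, so $\SetQr$ collapses to a single direction, and it then appeals to a \emph{possible realization} of the exact gradients, each orthogonal to $\qr(x)$, under which $x$ \emph{would} satisfy \eqref{eq:MOP_optimality} so that Lemma~\ref{lemma:norm_q_tilde} \emph{would} yield $\|\qr(x)\|_2\le\|\epsilon\|_\infty$ --- a counterfactual step that is never made precise. You instead obtain the key estimate purely algebraically: the active constraints give $-\qr(x)\cdot\nabla\ftilde_i(x)=\|\qr(x)\|_2\,\epsilon_i$, and multiplying by $\alphahat_i$ and summing gives $\|\qr(x)\|_2^2\le\|\qr(x)\|_2\,\|\epsilon\|_\infty$, hence $\|\qr(x)\|_2\le\|\epsilon\|_\infty$ after dividing by $\|\qr(x)\|_2\neq 0$. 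Your final bookkeeping also quietly repairs a slip in the paper's last display, which bounds $\bigl\|\sum_{i}\alphahat_i\bar\epsilon_i\bigr\|_2$ by $\bigl\|\sum_{i}\bar\epsilon_i\bigr\|_2$ (not a valid inequality in general); the correct chain is $\bigl\|\sum_i\alphahat_i\bar\epsilon_i\bigr\|_2\le\sum_i\alphahat_i\|\bar\epsilon_i\|_2\le\|\epsilon\|_\infty$, which is what you use. Finally, both arguments rest on the same implicit reading that the direction under consideration satisfies \eqref{eq:alphahat}: you need it to conclude that all constraints are active at the threshold, and the paper needs it (via $\gamma_i\ge\varphi_i$) to turn equality of sine sums into termwise equality. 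You flag this self-referential subtlety explicitly; the paper does not, so your version is, if anything, the more careful of the two.
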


\begin{proof}
For part (a), suppose that we have a descent direction $\qr(x)$ for which $\sum_{i=1}^k \alphahat_{min,i} > 1$. Then by \eqref{eq:alphahat}
	\begin{align*}
		&\sum_{i=1}^k \left( \frac{\|\qr(x)\|_2}{\|\nabla \ftilde_i(x)\|_2^2} \epsilon_i - \frac{\sum_{j=1, j\neq i}^k \alphahat_j \nabla  \ftilde_j(x) \cdot \nabla \ftilde_i(x)}{\|\nabla \ftilde_i(x)\|_2^2} \right) &> 1\color{white}{-\sum_{i=1}^k \alphahat_i=0}\\
		\Leftrightarrow \quad &\sum_{i=1}^k \left(\frac{\|\qr(x)\|_2}{\|\nabla \ftilde_i(x)\|_2^2} \epsilon_i - \frac{\left( -\qr(x) - \alphahat_i \nabla \ftilde_i(x) \right) \cdot \nabla \ftilde_i(x)}{\|\nabla \ftilde_i(x)\|_2^2} \right) &> 1\color{white}{-\sum_{i=1}^k \alphahat_i=0}\\
		\Leftrightarrow \quad &\sum_{i=1}^k \left(\frac{\|\qr(x)\|_2}{\|\nabla \ftilde_i(x)\|_2^2} \epsilon_i - \frac{-\qr(x) \cdot \nabla \ftilde_i(x)}{\|\nabla \ftilde_i(x)\|_2^2} + \alphahat_i \right) &> 1\color{white}{-\sum_{i=1}^k \alphahat_i=0}\\
		\Leftrightarrow \quad &\sum_{i=1}^k \left(\frac{\|\qr(x)\|_2}{\|\nabla \ftilde_i(x)\|_2^2} \epsilon_i - \frac{-\qr(x) \cdot \nabla \ftilde_i(x)}{\|\nabla \ftilde_i(x)\|_2^2} \right) &> 1-\sum_{i=1}^k \alphahat_i=0\\
		\Leftrightarrow \quad &\sum_{i=1}^k \left(\frac{\epsilon_i}{\|\nabla \ftilde_i(x)\|_2} - \frac{-\qr(x) \cdot \nabla \ftilde_i(x)}{\|\qr(x)\|_2 \cdot \|\nabla \ftilde_i(x)\|_2} \right) &> 0\color{white}{-\sum_{i=1}^k \alphahat_i=0}\\
		\Leftrightarrow \quad &\sum_{i=1}^k \sin \varphi_i > \sum_{i=1}^k \sin \gamma_i.
	\end{align*}
Since $\varphi_i,\gamma_i \in [0, \pi/2]$ for $i = 1, \ldots, k$, it follows that $\varphi_i > \gamma_i$ for at least one $i \in \lbrace 1, \ldots, k\rbrace$. This is a contradiction to \eqref{eq:gamma_i} yielding $\SetQr = \emptyset$.

For part (b), we repeat the calculation from part (a) with the distinction that $\sum_{i=1}^k \alphahat_{min,i} = 1$, and obtain $\sum_{i=1}^k \sin \varphi_i = \sum_{i=1}^k \sin \gamma_i$. This implies $\varphi_i = \gamma_i$ for $ i = 1, \ldots, k$, i.e.~the set of descent directions is reduced to a single valid direction. 
This is a situation similar to the one described in Lemma~\ref{lemma:norm_q_tilde}. Having a single valid descent direction results in the fact that there is now a possible realization of the gradients $\nabla f_i(x)$ such that each one is orthogonal to $\qr(x)$. In this situation, $x$ would satisfy \eqref{eq:MOP_optimality} and hence, $\|\qr(x)\|_2 \le \| \epsilon \|_{\infty}$ which leads to
\begin{align*}
	\Big\|\sum_{i=1}^k\alphahat_i \nabla f_i(x) \Big\|_2 &= \Big\|\sum_{i=1}^k\alphahat_i \left( \nabla \ftilde_i(x) + (-\bar{\epsilon}_i) \right) \Big\|_2 = \Big\| \qr(x) + \sum_{i=1}^k\alphahat_i \left( -\bar{\epsilon}_i\right) \Big\|_2 \\
	&\le \Big\| \qr(x) \Big\|_2 + \Big\|\sum_{i=1}^k\alphahat_i \bar{\epsilon}_i \Big\|_2 \le \Big\| \qr(x) \Big\|_2 + \Big\|\sum_{i=1}^k \bar{\epsilon}_i \Big\|_2 \le 2\| \epsilon \|_{\infty}.
\end{align*}
\end{proof}
\begin{corollary}\label{cor:alphamin_gt_1}
	If $\sum_{i=1}^k \alphahat_{min,i} > 1$, the angle of the cone spanned by the exact gradients $\nabla f_i(x)$ is larger than for $\sum_{i=1}^k \alphahat_{min,i} \le 1$ (cf.~Figure~\ref{fig:angle_descent_stop} (b)). 
	Moreover, if $\|\qr(x)\|_2$ is monotonically decreasing for decreasing distances of $x$ to the set of stationary points $\PSsub$, then the result from Theorem~\ref{th:inexact_descent} (b) holds for $\sum_{i=1}^k \alphahat_{min,i} \ge 1$.
\end{corollary}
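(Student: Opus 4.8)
The plan is to base both assertions on the chain of equivalences already obtained in the proof of Theorem~\ref{th:inexact_descent}. After rewriting the summands through the angles $\varphi_i$ and $\gamma_i$, that proof establishes
\[
\sum_{i=1}^k \alphahat_{min,i} > 1 \quad\Longleftrightarrow\quad \sum_{i=1}^k \sin\varphi_i > \sum_{i=1}^k \sin\gamma_i ,
\]
together with its analogues when $>$ is replaced by $=$ or by $\le$. This is the single tool I would use throughout: it converts the algebraic threshold on $\sum_{i=1}^k\alphahat_{min,i}$ into a statement about the angles $\gamma_i$ between $\qr(x)$ and the hyperplanes orthogonal to the $\nabla\ftilde_i(x)$, which in turn encode the geometry of the descent cone.

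For the first assertion I would translate the threshold into the aperture of the cones. The equivalence above shows that $\sum_{i=1}^k\alphahat_{min,i}>1$ holds exactly when $\sum_{i=1}^k\sin\gamma_i<\sum_{i=1}^k\sin\varphi_i$, i.e.~when the $\gamma_i$ are small, whereas $\sum_{i=1}^k\alphahat_{min,i}\le 1$ forces $\sum_{i=1}^k\sin\gamma_i\ge\sum_{i=1}^k\sin\varphi_i$. Since $\SetQ$ is, by the cone description \eqref{eq:descent_cone_exact} (cf.~\cite{NW06}), the polar cone of $\mathrm{cone}(\nabla f_1(x),\dots,\nabla f_k(x))$, and since the $\gamma_i$ measure the aperture of $\SetQ$, a narrower descent cone is dual to a wider gradient cone. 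Combined with Remark~\ref{rem:gamma}, where $\gamma_i\to 0$ precisely as $x$ approaches $\PSsub$ and the gradients approach a configuration satisfying \eqref{eq:MOP_optimality} (the widest gradient cone), this yields the claim: on $\{\sum_{i=1}^k\alphahat_{min,i}>1\}$ the exact gradients span a larger cone than on $\{\sum_{i=1}^k\alphahat_{min,i}\le 1\}$.

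For the refinement of Theorem~\ref{th:inexact_descent}(b) the goal is $\{x:\sum_{i=1}^k\alphahat_{min,i}\ge 1\}\subseteq\PSeps$. On the level set $\{\sum_{i=1}^k\alphahat_{min,i}=1\}$ this is exactly Theorem~\ref{th:inexact_descent}(b): there $\sum_{i=1}^k\sin\gamma_i=\sum_{i=1}^k\sin\varphi_i$, the admissible direction is unique, a realization of the $\nabla f_i(x)$ orthogonal to $\qr(x)$ exists, so $x$ satisfies \eqref{eq:MOP_optimality} for that realization and Lemma~\ref{lemma:norm_q_tilde} gives $\norm{\qr(x)}_2\le\norm{\epsilon}_{\infty}$. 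It remains to propagate this bound to $\{\sum_{i=1}^k\alphahat_{min,i}>1\}$. By the first assertion this region carries the wider gradient cone, hence by the monotone cone-widening towards $\PSsub$ recorded in Remark~\ref{rem:gamma} it lies closer to $\PSsub$ than its boundary $\{\sum_{i=1}^k\alphahat_{min,i}=1\}$. Invoking the hypothesis that $\norm{\qr(x)}_2$ decreases monotonically as the distance of $x$ to $\PSsub$ decreases, the value of $\norm{\qr(x)}_2$ on $\{\sum_{i=1}^k\alphahat_{min,i}>1\}$ is bounded by its value on the boundary, i.e.~by $\norm{\epsilon}_{\infty}$. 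Repeating the triangle-inequality estimate from the proof of Theorem~\ref{th:inexact_descent}(b), $\norm{\sum_{i=1}^k\alphahat_i\nabla f_i(x)}_2\le\norm{\qr(x)}_2+\norm{\epsilon}_{\infty}\le 2\norm{\epsilon}_{\infty}$, so $x\in\PSeps$.

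The step I expect to be delicate is the bridge in the second part between the algebraic threshold $\sum_{i=1}^k\alphahat_{min,i}\ge 1$ and the geometric notion of distance to $\PSsub$ on which the monotonicity hypothesis is phrased. The quantity $\sum_{i=1}^k\alphahat_{min,i}$ is not a function of that distance alone, since it also carries the factors $\norm{\nabla\ftilde_i(x)}_2$ and the angles, so the argument must route through the first assertion, using the equivalence of ``larger $\sum_{i=1}^k\alphahat_{min,i}$'', ``smaller $\gamma_i$'' and ``wider gradient cone'' together with Remark~\ref{rem:gamma} to justify that crossing the level $1$ corresponds to moving towards $\PSsub$. Making the implication ``wider cone $\Rightarrow$ closer to $\PSsub$'' precise beyond the two-dimensional picture of Figure~\ref{fig:angle_descent_stop}(b) is exactly where the monotonicity assumption does the real work, and it is the only place where additional care is required.
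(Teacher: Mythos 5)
The paper gives no explicit proof of this corollary---it is presented as an immediate consequence of the chain of equivalences in the proof of Theorem~\ref{th:inexact_descent} and the geometry of Figure~\ref{fig:angle_descent_stop}~(b)---and your reconstruction follows precisely that intended route: the threshold--angle equivalence plus polar-cone duality for the first claim, and, for the second, the monotonicity hypothesis used to transfer the bound $\|\qr(x)\|_2 \le \|\epsilon\|_{\infty}$ from the level set $\sum_{i=1}^k \alphahat_{min,i} = 1$ (where Theorem~\ref{th:inexact_descent}~(b) applies) to the region $\sum_{i=1}^k \alphahat_{min,i} > 1$, followed by the same triangle-inequality estimate. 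Your proposal is correct at the paper's own level of rigor; the step you flag as delicate (``wider gradient cone $\Rightarrow$ closer to $\PSsub$'') is exactly the point the paper, too, leaves at the level of the figure rather than making precise.
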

The results from Theorem \ref{th:inexact_descent} are visualized in Figure~\ref{fig:angle_descent_stop}. In the situation where the inexactness in the gradients $\nabla\ftilde_i(x)$ permits the exact gradients $\nabla f_i(x)$ to satisfy the KKT condition $\eqref{eq:MOP_optimality}$, $\SetQr = \emptyset$ and a descent direction can no longer be computed. 
\begin{figure}[htb]
	\centering
	\parbox[b]{0.7\textwidth}{\centering \includegraphics[width=0.7\textwidth]{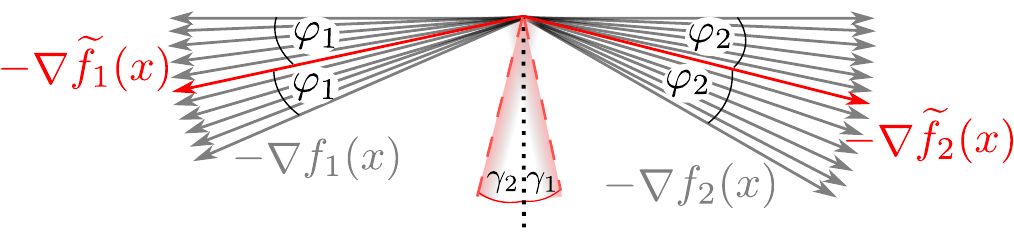}\\(a)}\\~\\~\\
	\parbox[b]{0.7\textwidth}{\centering \includegraphics[width=0.7\textwidth]{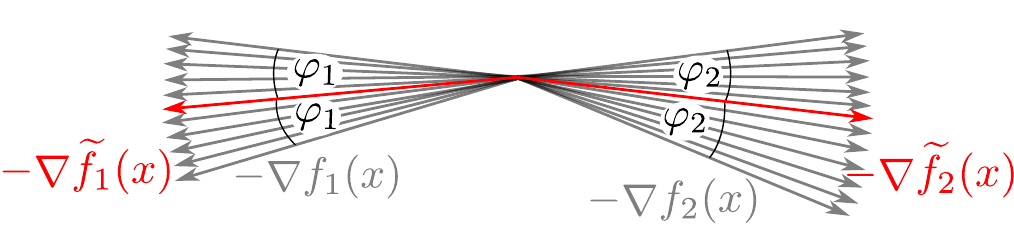}\\(b)}
	\caption{(a) Situation when a valid descent direction can no longer be computed. (b) Situation where $\sum_{i=1}^k \alphahat_{min,i} > 1$.}
	\label{fig:angle_descent_stop}
\end{figure}

In order to numerically compute a valid descent direction, one can compute a direction by solving \eqref{eq:QOP} and consequently verifying that $\qr(x)$ is indeed descending for all objectives by checking condition \eqref{eq:alphahat}. If this is not the case, we can solve \eqref{eq:QOP} again with adjusted lower bounds for $\alphahat$ (cf.~Algorithm~\ref{algo:Descent_direction}).
\begin{algorithm} 
	\caption{(Descent direction for inexact gradients)}
	\label{algo:Descent_direction}
	\begin{algorithmic}[1]
		\Require Inexact gradients $\nabla \ftilde_i(x)$, error bounds $\epsilon_i$ and lower bounds $\alphahat_{min,i} = 0$, $i = 1, \ldots, k$
		\Loop
		\State Compute $\alphahat_i$ by solving \eqref{eq:QOP} with $\alphahat_i \in [\alphahat_{min,i}, 1]$, $i = 1, \ldots,  k$
		\State Evaluate the condition \eqref{eq:alphahat} and update $\alphahat_{min,i}$
		\If{$\sum_{i=1}^{k}\alphahat_{min,i} \ge 1$}\\
			\quad\qquad$\qr(x) = 0$ ($\SetQr = \emptyset$)\\
			\quad\qquad\textbf{STOP}
		\ElsIf{$\alphahat_i \geq \alphahat_{min,i}$, $i = 1, \ldots,  k$}\\
			\quad\qquad$\qr(x) = -\sum_{i=1}^{k}\alphahat_i \nabla \ftilde_i(x)$\\
			\quad\qquad\textbf{STOP}
		\EndIf
		\EndLoop
	\end{algorithmic}
\end{algorithm}
Alternatively, one can compute the entire set of descent directions \cite{Bos12} and chose a direction from this set which satisfies \eqref{eq:gamma_i} or \eqref{eq:alphahat}, respectively. In the examples we investigated, we observed that $\qr(x)$ is equal to $q(x)$ for the majority of points $x$. This is likely due to the fact that by solving \eqref{eq:QOP}, we obtain a steepest descent like direction which very often is relatively far away from the descent cone's boundaries. Close to the set of substationary points, we observe updates of the lower bounds $\alphahat_{min,i}$, but in many cases, the line search strategy directly leads to points where $\SetQr = \emptyset$ without requiring adjusted bounds.

The obtained descent direction can now be utilized to efficiently compute a point which is approximately Pareto optimal, utilizing the advantages of gradient-based methods. In order to compute the entire Pareto set for MOPs with inexact gradient information, we will combine the above result with the algorithm presented in \cite{DSH05} in the next section.

\section{Subdivision Algorithm}
\label{sec:Subdivision}
Using the result of Theorem~\ref{th:inexact_descent} we can construct a global subdivision algorithm which computes a nested sequence of increasingly refined box coverings of the entire set of \emph{almost substationary points} $\PSeps$ (Equation \eqref{eq:PS_epsilon}). Besides globality, a benefit of this technique is that it can easily be applied to higher dimensions whereas especially geometric approaches struggle with a larger number of objectives. The computational cost, however, increases exponentially with the dimension of the Pareto set such that in practice, we are restricted to a moderate number of objectives. 
In the following, we recall the subdivison algorithm for the solution of multiobjective optimization problems with exact information before we proceed with our extension to inexact information. For details we refer to \cite{DSH05}.

\subsection{Subdivision Algorithm with Exact Gradients}
\label{subsec:Subdivision_exact}
In order to apply the subdivision algorithm to a multiobjective optimization problem, we first formulate a descent step of the optimization procedure using a line search approach, i.e.
\begin{align}
x_{j+1} = g(x_j) = x_j + h_j q_j(x_j), \label{eq:dyn_sys_MO}
\end{align}
where $q_j(x_j)$ is the descent direction according to \eqref{eq:q} and $h_j$ is an appropriately chosen step length (e.g.~according to the Armijo rule \cite{NW06} for all objectives). The subdivision algorithm was initially developed in the context of dynamical systems \cite{DH97} in order to compute the global attractor of a dynamical system $g$ relative to a set $Q$, i.e.~the set $A_Q$ such that $g(A_Q) = A_Q$. 
Using a multilevel subdivision scheme, the following algorithm yields an outer approximation of $A_{Q}$ in the form of a sequence of sets $\mathcal{B}_0,\mathcal{B}_1,\ldots$, where each $\mathcal{B}_s$ is a subset of $\mathcal{B}_{s-1}$ and consists of finitely many subsets $B$ (from now on referred to as \emph{boxes}) of $Q$ covering the relative global attractor $A_{Q}$. For each set $\mathcal{B}_s$, we define a box diameter 
\begin{align*}
\text{diam}(\mathcal{B}_s) = \max_{B \in \mathcal{B}_s} \text{diam}(B)
\end{align*}
which tends to zero for $s\rightarrow \infty$ within Algorithm~\ref{algo:Subdivision}.
\begin{algorithm} 
	\caption{(Subdivision algorithm)}
	\label{algo:Subdivision}
	Let $\mathcal{B}_0$ be an initial collection of finitely many subsets of the compact set $Q$ such that $\bigcup_{B\in\mathcal{B}_0}B = Q$. Then, $\mathcal{B}_s$ is inductively obtained from $\mathcal{B}_{s-1}$ in two steps:
	\begin{itemize}
		\item[(i)] \textbf{Subdivision}. Construct from $\mathcal{B}_{s-1}$ a new collection of subsets $\widehat{\mathcal{B}}_s$ such that
		\begin{align*}
		\bigcup_{B\in\widehat{\mathcal{B}}_s} B &= \bigcup_{B\in \mathcal{B}_{s-1}} B, \\
		\text{diam}(\widehat{\mathcal{B}}_s) &= \theta_s \text{diam}(\mathcal{B}_{s-1}), \quad 0 < \theta_{min}\le \theta_s \le \theta_{max} < 1.
		\end{align*}
		\item[(ii)] \textbf{Selection}. Define the new collection $\mathcal{B}_s$ by
		\begin{align*}
		\mathcal{B}_s = \left\lbrace B \in \widehat{\mathcal{B}}_s ~ \Big| ~ \exists \widehat{B}\in \widehat{\mathcal{B}}_s \text{ such that } g^{-1}(B) \cap \widehat{B} \neq \emptyset \right\rbrace.
		\end{align*}
	\end{itemize}
\end{algorithm}
Interpreting \eqref{eq:dyn_sys_MO} as a dynamical system, the attractor is the set of points for which $q(x_j) = 0$, i.e.~the set of points satisfying the KKT conditions. As a first step, we can prove that each accumulation point of the system is a substationary point for \eqref{eq:MOP}:
\begin{theorem} [\cite{DSH05}] \label{th:accumulation_q}
	Suppose that $x^*$ is an accumulation point of the sequence $(x_j)_{j=0,1,\ldots}$ created by \eqref{eq:dyn_sys_MO}. Then, $x^*$ is a substationary point of \eqref{eq:MOP}.
\end{theorem}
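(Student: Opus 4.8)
The goal is to show that $q(x^*)=0$, which by Theorem~\ref{th:SSW} is equivalent to $x^*$ satisfying the KKT condition \eqref{eq:MOP_optimality}, i.e.\ to $x^*\in\PSsub$. The plan is a proof by contradiction: assume $q(x^*)\neq 0$ and derive that the monotone decrease of the objectives along the sequence cannot be sustained. The quantitative ingredient I would establish first is a \emph{uniform strict descent} property of the direction from \eqref{eq:q}: writing the stationarity and complementarity conditions of the quadratic problem \eqref{eq:QOP} for its minimizer $\alphahat$ (with multiplier $\mu$ for the equality constraint), one finds $-\nabla f_i(x)\cdot q(x)\ge \mu/2=\|q(x)\|_2^2$ for every $i=1,\dots,k$. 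In particular, whenever $q(x^*)\neq 0$ the direction $q(x^*)$ is a \emph{strict} descent direction for all objectives simultaneously, with $\nabla f_i(x^*)\cdot q(x^*)\le -\|q(x^*)\|_2^2<0$.

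Next I would exploit the structure of the iteration \eqref{eq:dyn_sys_MO}. Because $q_j$ is a descent direction and $h_j$ is chosen by the Armijo rule for all objectives, each $f_i$ is non-increasing along $(x_j)$, so every sequence $\bigl(f_i(x_j)\bigr)_j$ is monotone. Let $(x_{j_l})_l$ be a subsequence with $x_{j_l}\to x^*$. By continuity of $f_i$, $f_i(x_{j_l})\to f_i(x^*)$, hence the monotone sequence $\bigl(f_i(x_j)\bigr)_j$ converges (to $f_i(x^*)$); consequently the consecutive decrements satisfy $f_i(x_j)-f_i(x_{j+1})\to 0$ as $j\to\infty$, and in particular along $j=j_l$.

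The Armijo condition guarantees, with some fixed $c\in(0,1)$, the sufficient-decrease bound $f_i(x_j)-f_i(x_{j+1})\ge -c\,h_j\,\nabla f_i(x_j)\cdot q_j\ge c\,h_j\,\|q_j\|_2^2$. Applying this along the subsequence and using $f_i(x_{j_l})-f_i(x_{j_l+1})\to 0$ gives $h_{j_l}\|q_{j_l}\|_2^2\to 0$; since $q$ is continuous (Theorem~\ref{th:SSW}) and $x_{j_l}\to x^*$, we have $\|q_{j_l}\|_2\to\|q(x^*)\|_2>0$, forcing $h_{j_l}\to 0$. Now I would use the backtracking nature of the Armijo rule (trial steps $1,\beta,\beta^2,\dots$ with $\beta\in(0,1)$): for $l$ large, $h_{j_l}<1$, so the previous trial step $h_{j_l}/\beta$ must have violated the decrease test for some objective, and since there are only $k$ objectives I pass to a further subsequence on which the violating index $i^*$ is fixed. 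Dividing the violated inequality by $h_{j_l}/\beta$ and applying the mean value theorem to $f_{i^*}$ yields $\nabla f_{i^*}(\zeta_l)\cdot q_{j_l}> c\,\nabla f_{i^*}(x_{j_l})\cdot q_{j_l}$ for some $\zeta_l$ on the segment $[x_{j_l},\,x_{j_l}+(h_{j_l}/\beta)q_{j_l}]$. Letting $l\to\infty$, we have $\zeta_l\to x^*$ (because $h_{j_l}\to 0$ and $q_{j_l}$ is bounded), so by continuity of $\nabla f_{i^*}$ and of $q$ the inequality passes to the limit as $(1-c)\,\nabla f_{i^*}(x^*)\cdot q(x^*)\ge 0$, i.e.\ $\nabla f_{i^*}(x^*)\cdot q(x^*)\ge 0$. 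This contradicts the strict descent established above, so $q(x^*)\neq 0$ is untenable and $x^*\in\PSsub$.

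The main obstacle is the handling of the line search rather than the descent geometry: establishing $h_{j_l}\to 0$ and then converting the failure of the Armijo test at the next-larger trial step into a limiting inequality. This step relies on three things I would be careful to justify — the explicit backtracking structure of the Armijo rule (so a failed trial at $h_{j_l}/\beta$ is available), the finiteness of the objective set (so a single violating index $i^*$ can be fixed along a subsequence by pigeonholing), and continuity of the gradients and of $q$ to pass to the limit. A secondary point worth spelling out is the uniform strict descent inequality $-\nabla f_i(x)\cdot q(x)\ge\|q(x)\|_2^2$, which is what makes the Armijo decrease nontrivial and which follows from the stationarity and complementarity conditions of \eqref{eq:QOP}.
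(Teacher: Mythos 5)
This theorem is stated in the paper as an imported result, credited to \cite{DSH05}, and the paper gives no proof of it at all; so there is no in-paper argument to compare yours against, and your proposal has to be judged on its own. On its own it is sound. The key quantitative ingredient, $-\nabla f_i(x)\cdot q(x)\ge\|q(x)\|_2^2$ for all $i$, is correctly derived: from the KKT system of \eqref{eq:QOP} one gets $-2\nabla f_i(x)\cdot q(x)=\lambda_i+\mu\ge\mu$ with $\lambda_i\ge 0$, and multiplying by $\alphahat_i$ and summing gives $\mu=2\|q(x)\|_2^2$, exactly as you claim; this is the same strict-descent property that underlies the treatment in \cite{SSW02}, on which \cite{DSH05} builds. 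The remainder is the standard accumulation-point argument for backtracking line search (monotonicity of each $f_i$ along the iterates, vanishing decrements, $h_{j_l}\to 0$, pigeonholing a violating objective at the rejected trial step, mean value theorem, and passing to the limit), and each step is justified by hypotheses available in the paper: continuous differentiability of the $f_i$ and local Lipschitz continuity of $q$ from Theorem~\ref{th:SSW}. The one caveat worth making explicit is that your proof is conditional on a particular reading of the step-length rule: the paper only says $h_j$ is ``appropriately chosen (e.g.~according to the Armijo rule for all objectives)'', whereas you need (i) a fixed initial trial step, (ii) geometric backtracking with ratio $\beta$, and (iii) acceptance only when the sufficient-decrease test holds simultaneously for all $k$ objectives. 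Under a different admissible step-length strategy your argument for converting $h_{j_l}\to 0$ into the limiting inequality $\nabla f_{i^*}(x^*)\cdot q(x^*)\ge 0$ would need to be replaced (e.g.~by a Wolfe-type or minimization-rule argument), so stating these assumptions up front, as you partly do in your closing paragraph, is essential rather than optional.
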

Using Algorithm \ref{algo:Subdivision}, we can now compute an outer approximation of the attractor of the dynamical system \eqref{eq:dyn_sys_MO} which contains all points satisfying \eqref{eq:MOP_optimality}. The attractor of a dynamical system is always connected, which is not necessarily the case for $\PSsub$. In this situation, the attractor is a superset of $\PSsub$. However, if $\PSsub$ is bounded and connected, it coincides with the attractor of \eqref{eq:dyn_sys_MO}, which is stated in the following theorem:
\begin{theorem}[\cite{DSH05}]\label{th:Subdivision}
	Suppose that the set $\PSsub$ of points $x\in\R^n$ satisfying \eqref{eq:MOP_optimality} is bounded and connected. Let $Q$ be a compact neighborhood of $\PSsub$. Then, an application of Algorithm~\ref{algo:Subdivision} to $Q$ with respect to the iteration scheme \eqref{eq:dyn_sys_MO} leads to a sequence of coverings $\mathcal{B}_s$ which converges to the entire set $\PSsub$, that is,
	\begin{align*}
		d_h(\PSsub, \mathcal{B}_s) \rightarrow 0, \quad \text{for } s = 0,1,2, \ldots,
	\end{align*}
	where $d_h$ denotes the Hausdorff distance.
\end{theorem}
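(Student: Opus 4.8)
The plan is to reduce the statement to the abstract convergence theory for the subdivision scheme from \cite{DH97} and then to identify the resulting limit object with $\PSsub$. First I would interpret the descent iteration \eqref{eq:dyn_sys_MO} as a dynamical system $g$ on the compact set $Q$ and recall what Algorithm~\ref{algo:Subdivision} computes. The subdivision step~(i) forces the box diameters to shrink geometrically, $\text{diam}(\mathcal{B}_s) \le \theta_{max}^{\,s}\,\text{diam}(\mathcal{B}_0) \to 0$, while the selection step~(ii) retains exactly those boxes whose preimage meets a retained box, so that $A_Q \subseteq \bigcup_{B\in\mathcal{B}_s} B$ for every $s$. The general theorem of \cite{DH97} then gives convergence in the Hausdorff distance to the relative global attractor
\[
A_Q = \bigcap_{j\ge 0} g^j(Q),
\]
i.e.\ $d_h(A_Q,\mathcal{B}_s)\to 0$, where $A_Q$ is the maximal $g$-invariant subset of $Q$. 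It therefore remains to establish $A_Q = \PSsub$.

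For the inclusion $\PSsub \subseteq A_Q$ I would show that every substationary point is a fixed point of $g$: if $x\in\PSsub$, then by Theorem~\ref{th:SSW} the descent direction satisfies $q(x)=0$, so $g(x)=x$. Since $Q$ is a neighborhood of $\PSsub$ we have $\{x\}\subseteq Q$, and a single fixed point is a $g$-invariant set, hence contained in the maximal invariant set $A_Q$. This gives $\PSsub \subseteq A_Q$ with no use of connectedness.

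The reverse inclusion $A_Q \subseteq \PSsub$ is where boundedness and connectedness enter, and I expect it to be the main obstacle. The objectives serve as a vector-valued Lyapunov function for $g$: along every orbit one has $f_i(g(x)) \le f_i(x)$ for all $i$, with simultaneous equality precisely on $\PSsub$. Each $y\in A_Q$ lies on a complete orbit contained in the compact set $Q$; monotonicity of every $f_i$ along this orbit, together with compactness (a bounded monotone sequence converges), shows via a LaSalle-type argument that all forward and backward limit sets consist of accumulation points of trajectories of \eqref{eq:dyn_sys_MO}, which are substationary by Theorem~\ref{th:accumulation_q}. The delicate point that remains is to exclude transient (``connecting'') pieces of orbit lying in $A_Q$ but not in $\PSsub$, which would join substationary points at strictly different objective levels. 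This is exactly where I would invoke the hypotheses: the relative global attractor $A_Q$ is always connected, whereas $\PSsub$ need not be in general; under the assumption that $\PSsub$ is bounded and connected it is itself a compact, connected, invariant set sandwiched inside the connected attractor, and one argues that no such additional connecting orbit can be present without contradicting the match between the always-connected $A_Q$ and the connected $\PSsub$. Having pinned down $A_Q=\PSsub$, I would combine this identity with the convergence $d_h(A_Q,\mathcal{B}_s)\to 0$ from the first step to conclude $d_h(\PSsub,\mathcal{B}_s)\to 0$, as claimed.
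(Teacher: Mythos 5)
First, a point of reference: the paper does not prove Theorem~\ref{th:Subdivision} at all --- it is imported verbatim from \cite{DSH05}, so there is no in-paper proof to compare against. Judged on its own, the skeleton of your proposal --- convergence of Algorithm~\ref{algo:Subdivision} to the relative global attractor $A_Q$ via the theory of \cite{DH97}, the inclusion $\PSsub \subseteq A_Q$ because substationary points are fixed points of $g$ (Theorem~\ref{th:SSW}), and a LaSalle-type argument for the reverse inclusion --- is indeed the strategy behind the result in \cite{DSH05}.

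The genuine gap is the reverse inclusion $A_Q \subseteq \PSsub$, exactly the step you yourself flag as delicate, and the argument you offer for it is circular. From ``$A_Q$ is connected, $\PSsub$ is connected, and $\PSsub \subseteq A_Q$'' nothing follows: a compact connected set can properly contain a compact connected subset (a disk properly contains a segment), so there is no ``contradiction with the match'' to be derived --- the match is precisely what has to be proved. What the descent structure actually gives is this: a point $y \in A_Q \setminus \PSsub$ lying on a complete orbit in $Q$ forces, via the strict decrease of every $f_i$ along nontrivial orbits and substationarity of the $\alpha$- and $\omega$-limit sets, the existence of two substationary points $z^-, z^+$ with $f_i(z^-) > f_i(z^+)$ for \emph{all} $i$. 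So the missing lemma is that such strict mutual domination cannot occur between points of a \emph{connected} $\PSsub$; this is where connectedness must enter quantitatively (for $k=1$ it essentially follows from Sard's theorem, since $f(\PSsub)$ is then a connected null subset of $\R$ and hence a single point; for $k \geq 2$ it requires a genuine argument exploiting the sign structure of $\sum_i \alpha_i \nabla f_i = 0$ along $\PSsub$), and your sketch contains no trace of it --- it is replaced by the topological non sequitur. Two further steps you use implicitly also need justification: (i) ``every $y \in A_Q$ lies on a complete orbit contained in $Q$'' and ``$A_Q$ is always connected'' both rely on forward invariance $g(Q) \subseteq Q$ (or a comparable assumption), since with the bare definition $A_Q = \bigcap_{j \geq 0} g^j(Q)$ membership only guarantees arbitrarily long \emph{backward} histories and the sets $g^j(Q)$ need not be nested; (ii) Theorem~\ref{th:accumulation_q} concerns forward sequences of \eqref{eq:dyn_sys_MO} only, so substationarity of \emph{backward} limit points needs a separate argument, which is not automatic because the Armijo step length --- and hence $g$ and the per-step decrease --- is not a continuous function of $x$.
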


The concept of the subdivision algorithm is illustrated in Figure~\ref{fig:GAIO_grad}, where in addition a numerical realization has been introduced (cf.~Section~\ref{subsubsec:Numerical_realization} for details).

\subsection{Subdivision Algorithm with Inexact Function and Gradient Values}
\label{subsec:Subdivision_inexact}
In this section, we combine the results from \cite{DSH05} with the results from Section~\ref{sec:InexactGradients} in order to devise an algorithm for the approximation of the set of substationary points of \eqref{eq:MOP} with inexact function and gradient information. So far, we have only considered inexactness in the descent direction where we only need to consider errors in the gradients. For the computation of a descent step, we further require a step length strategy \cite{NW06} where we additionally need to consider errors in the function values. For this purpose, we extend the concept of \emph{non-dominance} (\ref{def:Pareto_optimality}) to inexact function values:
\begin{definition}
	\label{def:Pareto_optimality_inexact}
	Consider the multiobjective optimization problem \eqref{eq:MOP}, where the objective functions $f_i(x)$, $i=1, \ldots, k$, are only known approximately according to \eqref{eq:function_error}. 
	Then 
	\begin{itemize}
		\item[(a)] a point $x^* \in \R^n$ \emph{confidently dominates} a point $x \in \R^n$, if $\ftilde_i(x^*)+\xi_i \leq \ftilde_i(x)-\xi_i$ for $i = 1, \ldots, k$ and $\ftilde_i(x^*)+\xi_i < \ftilde_i(x)-\xi_i$ for at least one $i \in 1, \ldots, k$.
		\item[(b)] a set $\mathcal{B}^* \subset \R^n$ \emph{confidently dominates} a set $\mathcal{B} \subset \R^n$ if for every point $x \in \mathcal{B}$ there exists at least one point $x^* \in \mathcal{B}^*$ dominating $x$.
		\item[(c)] The \emph{set of almost non-dominated points} which is a superset of the Pareto set $\PS$ is defined as:
			\begin{align}
				\PSxi = \left\lbrace x^* \in \R^n \Big | \nexists x \in \R^n \text{ with } \ftilde_i(x)+\xi_i \leq \ftilde_i(x^*)-\xi_i,~i = 1, \ldots, k \right\rbrace. \label{eq:PS_xi}
			\end{align}
	\end{itemize}
\end{definition}
\noindent
Note that the same definition was also introduced in \cite{SVCC09} in order to increase the number of almost Pareto optimal points and thus, the number of possible options for a decision maker. 

As a consequence of the inexactness in the function values and the gradients, the approximated set is a superset of the Pareto set. Depending on the errors $\xi_i$ and $\epsilon_i$, $i = 1, \ldots, k$, in the function values and in the gradients, respectively, each point is either contained in the set $\PSxi$ \eqref{eq:PS_xi} or in the set $\PSeps$ \eqref{eq:PS_epsilon}. Based on these considerations, we introduce an inexact dynamical system similar to \eqref{eq:dyn_sys_MO}:
\begin{align} 
    x_{j+1} &= x_j + h_j p_j, \label{eq:dyn_sys_inexact}
\end{align}
where the direction $p_j$ is computed using Algorithm~\ref{algo:Descent_direction} ($p_j = \qr(x_j))$ and the step length $h_j$ is determined by a modified Armijo rule \cite{NW06} such that $\ftilde_i(x_j + h_j p_j) + \xi_i \leq \ftilde_i(x_{j-1}) + c_1 h_j p_j^\top \nabla \ftilde_i(x_{j-1})$.
If the errors $\xi$ and $\epsilon$ are zero, the computed set is reduced to the set of substationary points. In this situation $\PSxi = \PS$ and $\sum_{i=1}^k \alphahat_{min,i} = 0$, hence $p_j = q(x_j)$. Convergence of the dynamical system \eqref{eq:dyn_sys_inexact} to an \emph{approximately substationary} point is investigated in the following theorem:
\begin{theorem} \label{th:accumulation_inexact}
	Consider the multiobjective optimization problem \eqref{eq:MOP} with inexact objective functions and inexact gradients according to \eqref{eq:function_error} and \eqref{eq:gradient_error}, respectively. Suppose that $x^*$ is an accumulation point of the sequence $(x_j)_{j=0,1,\ldots}$ created by \eqref{eq:dyn_sys_inexact}. Then,
	\begin{itemize}
		\item[(a)] $x^* \in \PSepsxi = \PSeps \cup \PSxi$ where $\PSeps$ and $\PSxi$ are defined according to \eqref{eq:PS_epsilon} and \eqref{eq:PS_xi}, respectively.
		\item[(b)] If $\xi_i = 0$, $\epsilon_i = 0$, $i=1, \ldots, k$, $x^*$ is a substationary point of \eqref{eq:MOP}.
	\end{itemize}
\end{theorem}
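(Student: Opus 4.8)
The plan is to mirror the structure of the exact convergence result Theorem~\ref{th:accumulation_q}, but to split the analysis according to the \emph{two distinct ways} in which the inexact dynamical system \eqref{eq:dyn_sys_inexact} can fail to move. The map $g$ underlying \eqref{eq:dyn_sys_inexact} leaves a point $x$ fixed precisely when either (i) the descent-direction subroutine (Algorithm~\ref{algo:Descent_direction}) returns $p = \qr(x) = 0$, i.e.~$\sum_{i=1}^k \alphahat_{min,i} \ge 1$ and $\SetQr = \emptyset$, so that no descent direction valid for the exact objectives can be guaranteed; or (ii) $\qr(x) \neq 0$, but the modified Armijo rule admits no step length $h > 0$ producing a \emph{confident} decrease of all $\ftilde_i$. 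The whole point of part~(a) will be to identify failure mode (i) with membership in $\PSeps$ and failure mode (ii) with membership in $\PSxi$. Accordingly I would prove (a) by contradiction: assume $x^* \notin \PSepsxi = \PSeps \cup \PSxi$ and derive a persistent, quantitatively bounded decrease that is incompatible with $x^*$ being an accumulation point of a confidently decreasing sequence.

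Concretely, suppose $x^* \notin \PSeps$. Then at $x^*$ we have $\sum_{i=1}^k \alphahat_{min,i} < 1$ strictly (using Theorem~\ref{th:inexact_descent}(b) to rule out the boundary case $\sum = 1$), so $\SetQr(x^*) \neq \emptyset$ and Algorithm~\ref{algo:Descent_direction} returns a nonzero $\qr(x^*)$ which, by Lemma~\ref{lemma:alpha_min}, satisfies \eqref{eq:gamma_i} and is therefore a genuine descent direction for \emph{all} exact objectives $f_i$. Suppose also $x^* \notin \PSxi$; by definition \eqref{eq:PS_xi} this means the values $\ftilde_i(x^*)$ are not almost non-dominated, so there is strict room to decrease every $\ftilde_i$ along $\qr(x^*)$ and the modified Armijo rule admits a positive step. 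Using the local Lipschitz continuity of the direction map (Theorem~\ref{th:SSW} and its inexact analogue) together with continuity of the $\nabla \ftilde_i$, I would show these two properties persist on a neighborhood $U$ of $x^*$: there exist a uniform step length $\bar h > 0$ and a constant $\delta > 0$ with $\ftilde_i(g(x)) \le \ftilde_i(x) - \delta$ for every $x \in U$ and every $i$.

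The contradiction then comes from the monotonicity built into the Armijo rule. Each step of \eqref{eq:dyn_sys_inexact} confidently decreases every $\ftilde_i$, so each sequence $\big(\ftilde_i(x_j)\big)_j$ is monotonically nonincreasing. Along the subsequence $x_{j_l} \to x^*$, continuity gives $\ftilde_i(x_{j_l}) \to \ftilde_i(x^*)$; a nonincreasing real sequence with a convergent subsequence converges, so $\ftilde_i(x_j) \to \ftilde_i(x^*)$ and the per-step decrements $\ftilde_i(x_j) - \ftilde_i(x_{j+1}) \to 0$. But for all large $l$ the iterate $x_{j_l}$ lies in $U$, where the decrement is at least $\delta > 0$ --- a contradiction. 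Hence $x^* \in \PSeps \cup \PSxi$, proving (a). Part~(b) then follows immediately: setting $\xi_i = \epsilon_i = 0$ gives $\ftilde_i = f_i$, $\nabla\ftilde_i = \nabla f_i$, and by Remark~\ref{rem:no_error} all lower bounds vanish, so $\alphahat_{min,i}=0$, $p_j = q(x_j)$, and \eqref{eq:dyn_sys_inexact} reduces to \eqref{eq:dyn_sys_MO}; moreover $\PSeps$ collapses to $\{x : \|\sum_i\alphahat_i\nabla f_i(x)\|_2 \le 0\} = \PSsub$ and $\PSxi$ to a subset of $\PSsub$, so $x^* \in \PSeps \cup \PSxi = \PSsub$, recovering Theorem~\ref{th:accumulation_q}.

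The main obstacle I expect is twofold and concentrated in the bridging claims rather than the limit argument. First, identifying failure mode (i) with $\PSeps$ is only clean on the boundary $\sum_{i=1}^k\alphahat_{min,i} = 1$ handled by Theorem~\ref{th:inexact_descent}(b); for the stopping condition $\sum_{i=1}^k\alphahat_{min,i} > 1$ actually used by Algorithm~\ref{algo:Descent_direction} one must invoke the monotonicity hypothesis of Corollary~\ref{cor:alphamin_gt_1} (or argue via a limiting point on the boundary that the descending trajectory must cross). Second, the uniform-descent step needs continuity of the \emph{output} of Algorithm~\ref{algo:Descent_direction}, which is more delicate than the exact case because the active lower bounds $\alphahat_{min,i}$ can switch; I would need to argue that on the open region $x^* \notin \PSepsxi$ the relevant quantities vary continuously enough to extract the uniform $\bar h$ and $\delta$, which is the technical heart of the argument.
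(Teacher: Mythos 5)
Your route differs from the paper's: the paper does not set up a contradiction with a uniform-descent neighborhood. Instead it makes a four-way case distinction on whether an iterate $x_j$ lies in $\PSeps$ and/or $\PSxi$, observes that in the first three cases the dynamics \eqref{eq:dyn_sys_inexact} halts at $x_j$ itself ($p_j = 0$ when $x_j\in\PSeps$ because $\SetQr=\emptyset$; $h_j=0$ when $x_j\in\PSxi$ because any admissible modified-Armijo step would produce a point confidently dominating $x_j$, contradicting \eqref{eq:PS_xi}), so that the accumulation point trivially lies in $\PSepsxi$; and if no iterate ever meets $\PSeps\cup\PSxi$, it invokes the exact-case convergence result of \cite{DSH05} to conclude $x^*\in\PSsub\subseteq\PSeps$. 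Your proposal instead re-derives convergence from scratch in the classical descent-method style, which would be a legitimate alternative --- but as written it has a genuine gap.

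The gap is the bridging claim that $x^*\notin\PSxi$ implies that ``the modified Armijo rule admits a positive step'' along $\qr(x^*)$. Definition \eqref{eq:PS_xi} only asserts that \emph{some} point $y\in\R^n$ exists with $\ftilde_i(y)+\xi_i\le\ftilde_i(x^*)-\xi_i$ for all $i$; that point need not lie anywhere near the ray $x^*+h\,\qr(x^*)$. The modified Armijo condition demands a decrease of at least $2\xi_i$ in \emph{every} objective along this particular ray (the linear model term is nonpositive, and for small $h$ the actual decrease is only of order $h$, which can never beat the fixed offset $2\xi_i>0$), so its satisfiability depends on the global behavior of the $\ftilde_i$ along that ray and may fail even though a confidently dominating point exists elsewhere. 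Only the converse implication ($x\in\PSxi\Rightarrow h=0$) is valid, and that is the direction the paper uses. Without the direction you need, no uniform decrement $\delta>0$ on a neighborhood of $x^*$ can be produced, and your contradiction argument never gets started. On top of this, the two difficulties you flag yourself remain unresolved: excluding $\sum_{i=1}^k\alphahat_{min,i}>1$ for points outside $\PSeps$ requires the extra monotonicity hypothesis of Corollary~\ref{cor:alphamin_gt_1}, which is not among the theorem's assumptions, and the neighborhood-uniformity of the output of Algorithm~\ref{algo:Descent_direction} (with the active bounds $\alphahat_{min,i}$ switching) is exactly the technical heart your route would require --- acknowledging that obstacle is not the same as overcoming it. The paper's case-splitting plus reduction to the exact result sidesteps all three issues.
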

\begin{proof}
(a) For a point $x_j$ created by the sequence \eqref{eq:dyn_sys_inexact} one of the following statements is true:
\begin{align*}
	\mbox{i)}\quad x_j \in \PSeps \quad \land \quad x_j \notin \PSxi \qquad \qquad \mbox{ii)}\quad x_j \notin \PSeps \quad \land \quad x_j \in \PSxi\\
	\mbox{iii)} \quad x_j \in \PSeps \quad \land \quad x_j \in \PSxi \qquad \qquad \mbox{iv)} \quad x_j \notin \PSeps \quad \land \quad x_j \notin \PSxi\\
\end{align*}
In case i) $x_j \in \PSeps$ which means that the gradients $\nabla \ftilde_i(x_j)$, $i=1, \ldots,k$, approximately satisfy the KKT conditions. We obtain $\sum_{i=1}^k \alphahat_{min,i} = 1$, i.e.~the set of valid descent directions is empty ($\SetQr=\emptyset$, cf.~Theorem~\ref{th:inexact_descent}). Consequently, $p_j = 0$ and the point $x_j$ is an accumulation point of the sequence \eqref{eq:dyn_sys_inexact}. In case ii) the inaccuracies in the function values $\ftilde_i(x_j)$, $i=1, \ldots,k$ prohibit a guaranteed decrease for all objectives. According to the modified Armijo rule $h_j = 0$ such that $x_j$ is an accumulation point. In case iii) both $p_j = 0$ and $h_j = 0$. In case iv) we have $p_j = q(x_j)$. If for any $j \in \lbrace 0, 1, \ldots \rbrace$, $x_j \in \PSeps$ or $x_j \in \PSxi$, we are in one of the cases i) to iii) and $x_j$ is an accumulation point. Otherwise, we obtain a descent direction such that the sequence \eqref{eq:dyn_sys_inexact} converges to a substationary point $x^* \in \PSsub \subseteq \PSeps$ of \eqref{eq:MOP} which is proved in \cite{DSH05}.

For part (b), we obtain $\alphahat_{min,i} = 0$ by setting the errors $\epsilon_i$, $i=1, \ldots, k$, to zero and hence, the descent direction is $p_j = q(x_j)$ (cf.~Algorithm.~\ref{algo:Descent_direction}). When $\xi_i = 0$, $i=1, \ldots, k$, the modified Armijo rule becomes the standard Armijo rule for multiple objectives. Consequently, the problem is reduced to the case with exact function and gradient values (case iv) in part (a)).
\end{proof}

Following along the lines of \cite{DSH05}, we can use this result in order to prove convergence of the subdivision algorithm with inexact values:
\begin{theorem} \label{th:Subdivision_inexact}
	Suppose that the set $\PSepsxi = \PSeps \cup \PSxi$ where $\PSeps$ and $\PSxi$ are defined according to \eqref{eq:PS_epsilon} and \eqref{eq:PS_xi}, respectively, is bounded and connected. Let $Q$ be a compact neighborhood of $\PSepsxi$. Then, an application of Algorithm~\ref{algo:Subdivision} to $Q$ with respect to the iteration scheme \eqref{eq:dyn_sys_inexact} leads to a sequence of coverings $\mathcal{B}_s$ which is a subset of $\PSepsxi$ and a superset of the set $\PSsub$ of substationary points $x\in\R^n$ of \eqref{eq:MOP}, that is,
	\begin{align*}
		\PSsub \subset \mathcal{B}_s \subset \PSepsxi.
	\end{align*}
	Consequently, if the errors tend towards zero, we observe
	\begin{align*}
		\lim_{\epsilon_i, \xi_i \rightarrow 0,~i = 1, \ldots, k} d_h(\mathcal{B}_s, \PSepsxi) = d_h(\PSsub, \mathcal{B}_s) =  0.
	\end{align*}
\end{theorem}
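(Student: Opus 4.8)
The plan is to recast the inexact descent iteration \eqref{eq:dyn_sys_inexact} as a dynamical system $g$ and to identify its relative global attractor $A_Q$ with the set $\PSepsxi$, so that the convergence machinery of Theorem~\ref{th:Subdivision} (i.e.~the result of \cite{DSH05}) applies essentially verbatim. The argument therefore splits into two inclusions characterizing $A_Q$, followed by an invocation of the known subdivision convergence result and a final limiting argument as the errors vanish.

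First I would establish the upper inclusion $A_Q \subseteq \PSepsxi$. By Theorem~\ref{th:accumulation_inexact}(a), every accumulation point of a sequence $(x_j)$ generated by \eqref{eq:dyn_sys_inexact} lies in $\PSepsxi = \PSeps \cup \PSxi$. Since the relative global attractor is the maximal invariant set in $Q$ and every point of $A_Q$ is an accumulation point of the dynamics, this forces $A_Q \subseteq \PSepsxi$. Conversely, for the lower inclusion I would show that each point of $\PSepsxi$ is a fixed point of $g$: on $\PSeps$ Theorem~\ref{th:inexact_descent} yields $\SetQr = \emptyset$ and hence $p_j = 0$, while on $\PSxi$ the modified Armijo rule returns $h_j = 0$; in either case $g(x) = x$. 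Fixed points are invariant, so $\PSepsxi \subseteq A_Q$ and thus $A_Q = \PSepsxi$. In particular, every substationary point satisfies $\| \sum_{i=1}^k \alphahat_i \nabla f_i(x) \|_2 = 0 \le 2\|\epsilon\|_\infty$, so that $\PSsub \subseteq \PSeps \subseteq \PSepsxi = A_Q$.

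With the attractor identified, and using the hypothesis that $\PSepsxi$ is bounded and connected together with the compact neighborhood $Q$, I would apply Theorem~\ref{th:Subdivision} to the system $g$. This gives a nested sequence of outer coverings with $d_h(A_Q, \mathcal{B}_s) \to 0$. Because the coverings are outer approximations of the attractor, $\PSsub \subseteq A_Q \subseteq \mathcal{B}_s$ for every $s$, which is the claimed superset property; the convergence $\mathcal{B}_s \to A_Q = \PSepsxi$ then yields the asymptotic inclusion $\mathcal{B}_s \subseteq \PSepsxi$, and together these give the sandwiching $\PSsub \subset \mathcal{B}_s \subset \PSepsxi$. For the consequence, letting $\epsilon_i, \xi_i \to 0$ shrinks the defining bound $2\|\epsilon\|_\infty$ in \eqref{eq:PS_epsilon} to zero, so $\PSeps \to \PSsub$, and collapses $\PSxi$ to the Pareto set $\PS \subseteq \PSsub$ as already noted in the text; hence $\PSepsxi \to \PSsub$ and both Hausdorff distances vanish simultaneously.

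I expect the main obstacle to be the precise identification $A_Q = \PSepsxi$, specifically the lower inclusion: $\PSeps$ is defined through the norm bound \eqref{eq:PS_epsilon}, whereas the fixed-point condition $p_j = 0$ corresponds to $\sum_{i=1}^k \alphahat_{min,i} \ge 1$. Reconciling these two descriptions of the same stopping set requires the monotonicity of $\|\qr(x)\|_2$ invoked in Corollary~\ref{cor:alphamin_gt_1}, so that the level set $\sum_{i=1}^k \alphahat_{min,i} = 1$ indeed bounds the region where no guaranteed descent direction exists. A secondary point of care is that Theorem~\ref{th:Subdivision} is stated for the exact system with attractor $\PSsub$; I would verify that its proof in \cite{DSH05} uses only boundedness, connectedness and the accumulation-point property of the underlying map, all of which \eqref{eq:dyn_sys_inexact} supplies through Theorem~\ref{th:accumulation_inexact}, so that the result transfers to $g$ with attractor $\PSepsxi$.
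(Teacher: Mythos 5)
Your proposal is correct and takes essentially the same route as the paper, which in fact states this theorem \emph{without} an explicit proof, offering only the remark that it follows ``along the lines of'' \cite{DSH05} using Theorem~\ref{th:accumulation_inexact} --- exactly your strategy of identifying the relative global attractor of \eqref{eq:dyn_sys_inexact} with $\PSepsxi$ (fixed points on $\PSeps \cup \PSxi$ for the lower inclusion, accumulation points for the upper one) and then transferring the subdivision convergence machinery of Theorem~\ref{th:Subdivision}. If anything, your reconstruction is more careful than the paper's own presentation: you correctly flag that identifying $\PSeps$ with the stopping set $\sum_{i=1}^k \alphahat_{min,i} \ge 1$ requires the monotonicity assumption of Corollary~\ref{cor:alphamin_gt_1}, and that the inclusion $\mathcal{B}_s \subseteq \PSepsxi$ can only be read asymptotically, since each $\mathcal{B}_s$ is an outer covering of the attractor.
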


\subsubsection{Numerical Realization of the Selection Step}
\label{subsubsec:Numerical_realization}
In this section, we briefly describe the numerical realization of Algorithm \ref{algo:Subdivision}. For details, we refer to \cite{DSH05}. The elements $B \in \mathcal{B}_s$ are $n$-dimensional boxes. In the selection step, each box is represented by a prescribed number of sample points at which the dynamical system \eqref{eq:dyn_sys_inexact} is evaluated according to Algorithm~\ref{algo:descent_step_inexact} (see Figure~\ref{fig:GAIO_grad}~(a) for an illustration). Then, we evaluate which boxes the sample points are mapped into and eliminate all ``empty'' boxes, i.e.~boxes which do not possess a preimage within $\mathcal{B}_s$ (Figure~\ref{fig:GAIO_grad}~(b)). The remaining boxes are subdivided and we proceed with the next elimination step until a certain stopping criterion is met (e.g.~a prescribed number of subdivision steps).
\begin{algorithm}[H]
\caption{(Descent step under inexactness)}
\label{algo:descent_step_inexact}
\begin{algorithmic}[1]
\Require Initial point $x_0$, error bounds $\xi_i$ and $\epsilon_i$, $i = 1, \ldots, k$, constant $0 < c_1 < 1$
\State Compute $\ftilde_i(x_0)$ and $\nabla \ftilde_i(x_0)$, $i = 1, \ldots, k$
\State Compute a direction $p$ according to Algorithm~\ref{algo:Descent_direction}
\State Compute a step length $h$ that satisfies the modified Armijo rule \newline $\ftilde_i(x_0 + h p) + \xi_i \leq \ftilde_i(x_0) - \xi_i + c_1 h p^\top \nabla \ftilde_i(x_0)$, e.g.~via backtracking \cite{NW06}
\State Compute $x_{step} = x_0 + h p$
\end{algorithmic}
\end{algorithm}

\begin{figure}[t]
	\centering
	\parbox[b]{0.245\textwidth}{\centering \includegraphics[width=0.24\textwidth]{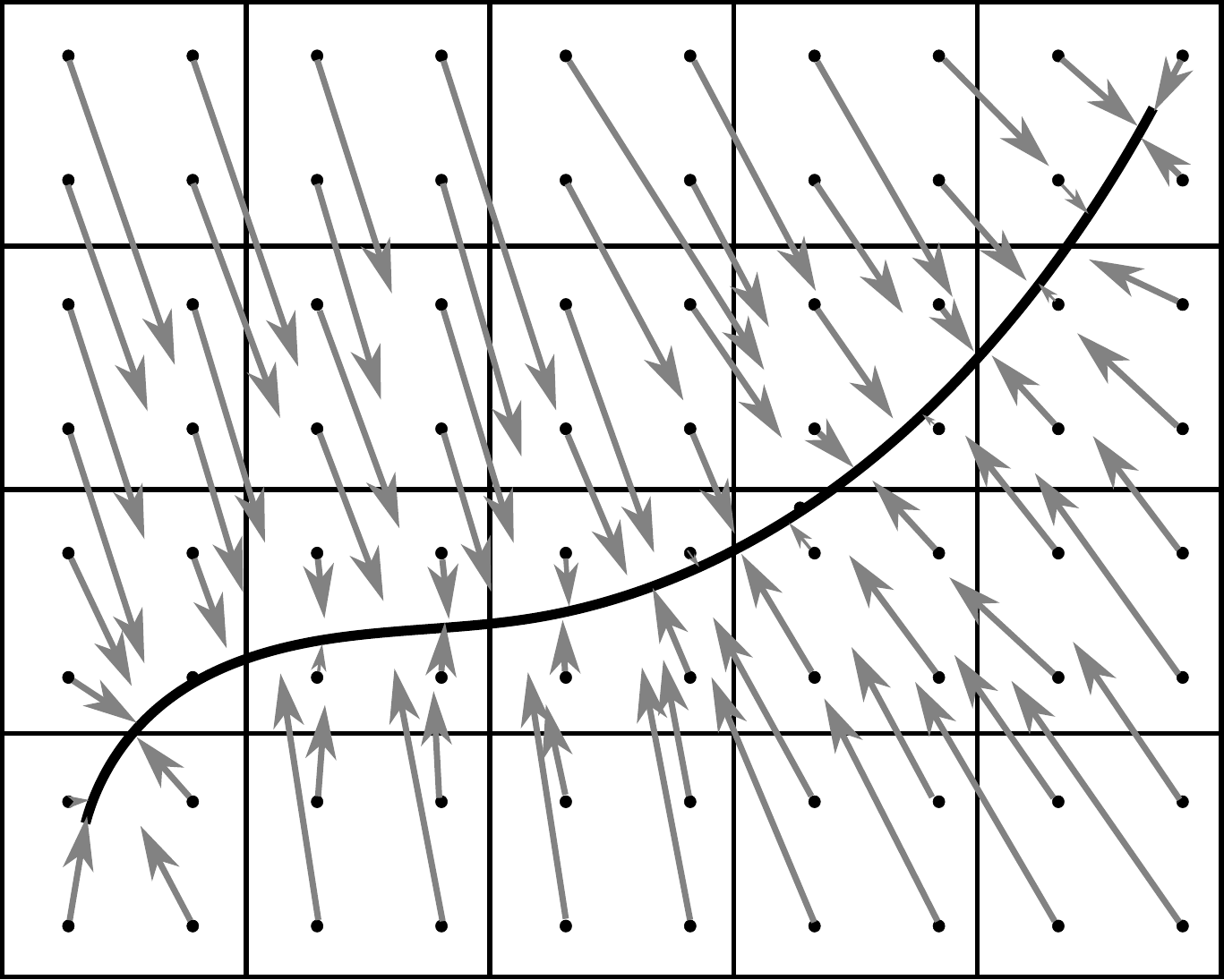}\\(a)}
	\parbox[b]{0.245\textwidth}{\centering \includegraphics[width=0.24\textwidth]{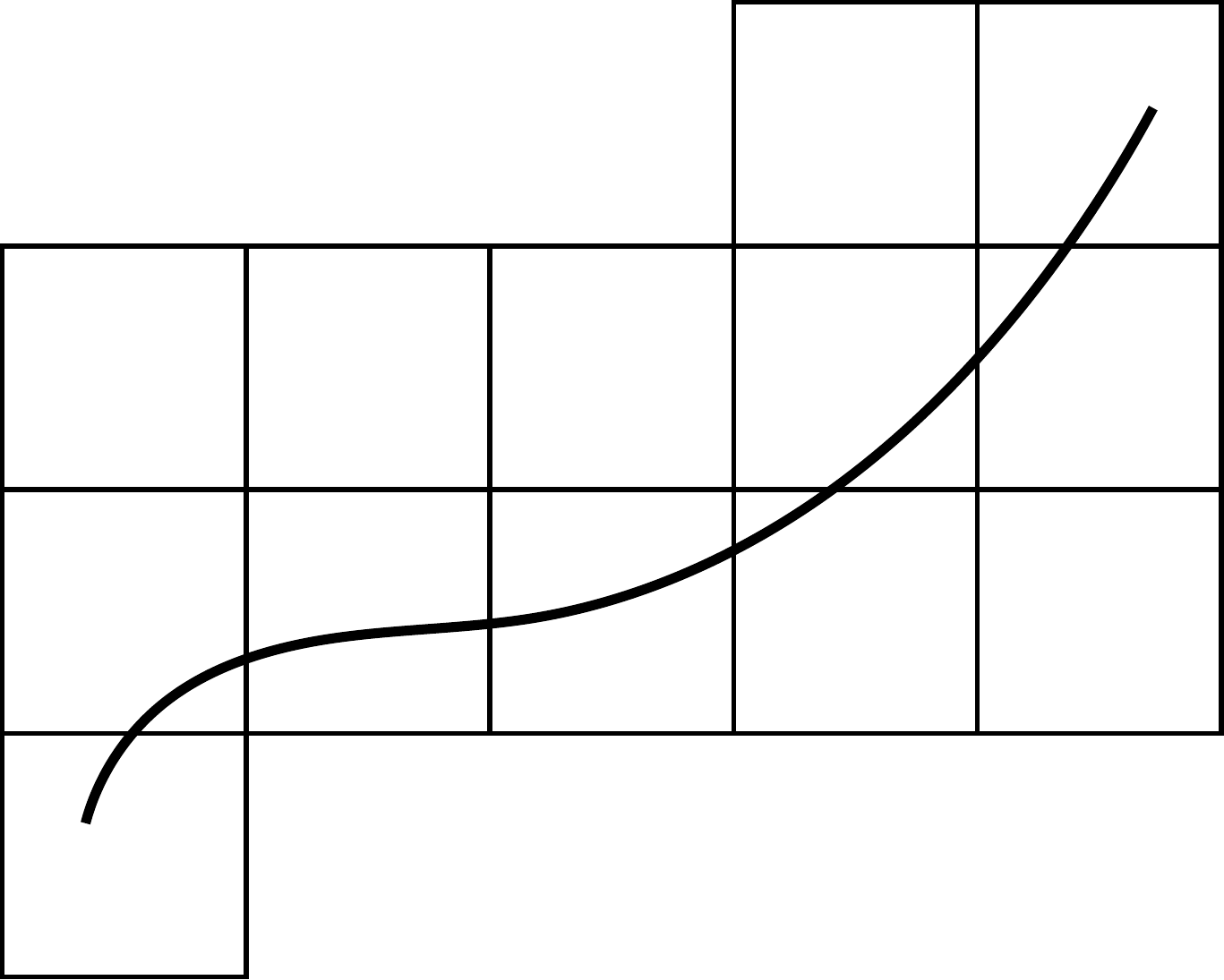}\\(b)}
	\caption{Global subdivision algorithm -- selection step. (a) Evaluation of the dynamical system \eqref{eq:dyn_sys_MO}. (b) All boxes that do not possess a preimage
		within the collection are discarded.}
	\label{fig:GAIO_grad}
\end{figure}

\subsection{Gradient Free Realization}
\label{subsec:Gradient_free_realization}
In many applications, gradients are unknown or difficult to compute. In this case, we can use a gradient-free alternative of Algorithm \ref{algo:Subdivision} which is called the \emph{Sampling algorithm} in \cite{DSH05}. Algorithm \ref{algo:Sampling} also consists of a subdivision and a selection step with the difference that the selection step is a non-dominance test. Hence, we compare all sample points and eliminate all boxes that contain only dominated points. This way, it is also possible to easily include constraints. In the presence of inequality constraints, for example, we eliminate all boxes for which all sample points violate the constraints and then consequently perform the non-dominance test on the remaining boxes. Equality constrains are simply modeled by introducing two inequality constraints.
Finally, a combination of both the gradient-based and the gradient-free algorithm can be applied in order to speed up convergence or to reduce the gradient-based algorithm to the computation of the Pareto set $\PS$ instead of the set of substationary points $\PSsub$.

Considering errors in the function values, we can use the sampling algorithm to compute the superset $\PSxi$ of the global Pareto set $\PS$. In the limit of vanishing errors, this is again reduce to the exact Pareto set:
\[
	\lim_{\xi_i\rightarrow 0,~i=1, \ldots, k} d_h(\PSxi,\PS) = 0.
\]

\begin{algorithm} 
	\caption{(Sampling algorithm)}
	\label{algo:Sampling}
	Let $\mathcal{B}_0$ be an initial collection of finitely many subsets of the compact set $Q$ such that $\bigcup_{B\in\mathcal{B}_0}B = Q$. Then, $\mathcal{B}_s$ is inductively obtained from $\mathcal{B}_{s-1}$ in two steps:
	\begin{itemize}
		\item[(i)] Subdivision. Construct from $\mathcal{B}_{s-1}$ a new collection of subsets $\widehat{\mathcal{B}}_s$ such that
		\begin{align*}
			\bigcup_{B\in\widehat{\mathcal{B}}_s} B &= \bigcup_{B\in \mathcal{B}_{s-1}} B, \\
			\text{diam}(\widehat{\mathcal{B}}_s) &= \theta_s \text{diam}(\mathcal{B}_{s-1}), \quad 0 < \theta_{min}\le \theta_s \le \theta_{max} < 1.
		\end{align*}
		\item[(ii)] Selection. Define the new collection $\mathcal{B}_s$ by
		\begin{align*}
			\mathcal{B}_s = \left\lbrace B \in \widehat{\mathcal{B}}_s ~ \Big| ~ \nexists \widehat{B}\in \widehat{\mathcal{B}}_s \text{ such that } \widehat{B} \text{ confidently dominates } B \right\rbrace.
		\end{align*}
	\end{itemize}
\end{algorithm}

\section{Results}
\label{sec:Results}
In this section, we illustrate the results from Sections~\ref{sec:InexactGradients} and \ref{sec:Subdivision} using three examples. To this end, we add random perturbations to the respective model such that \eqref{eq:function_error} and \eqref{eq:gradient_error} hold. We start with a two dimensional example function $F:\R^2 \rightarrow \R^2$ for two paraboloids:
\begin{align}
	\min_{x\in\R^2} F(x) = \min_{x\in\R^2} \left( \begin{array}{c}
	(x_1 - 1)^2 + (x_2 - 1)^4 \\
	(x_1 + 1)^2 + (x_2 + 1)^2
	\end{array} \right). \label{eq:example_zweiparabeln}
\end{align}
In Figure~\ref{fig:zweiparabeln}, the box covering of the Pareto set and the corresponding Pareto front obtained with Algorithm~\ref{algo:Subdivision} are shown without errors and with $\xi = (0, 0)^{\top}$, $\epsilon = (0.1, 0.1)^{\top}$ (Figure~\ref{fig:zweiparabeln} (a)) and $\xi = (0, 0)^{\top}$, $\epsilon = (0.0, 0.2)^{\top}$ (Figure~\ref{fig:zweiparabeln} (b)), respectively. The background in (a) and (b) is colored according to the norm of the optimality condition \eqref{eq:MOP_optimality}, obtained by solving \eqref{eq:QOP}, and the white line indicates the upper bound of the error \eqref{eq:PS_epsilon}. We see that in (a), the box covering is close to the error bound whereas it is less sharp in (b). Consequently, the error estimate is more accurate when the errors are of comparable size in all gradients. 
The Pareto fronts corresponding to (a) and (b) are shown in (c) and (d). We see that the difference between the Pareto front of the exact solution (red) and the Pareto front of the inexact solution (green) is relatively small but that additional points are computed at the boundary of the front, i.e.~close to the individual minima $F_1(x) = 0$ and $F_2(x) = 0$.
\begin{figure}
	\centering
	\hspace{0.04\textwidth}
	\parbox[b]{0.47\textwidth}{\centering \includegraphics[width=0.47\textwidth]{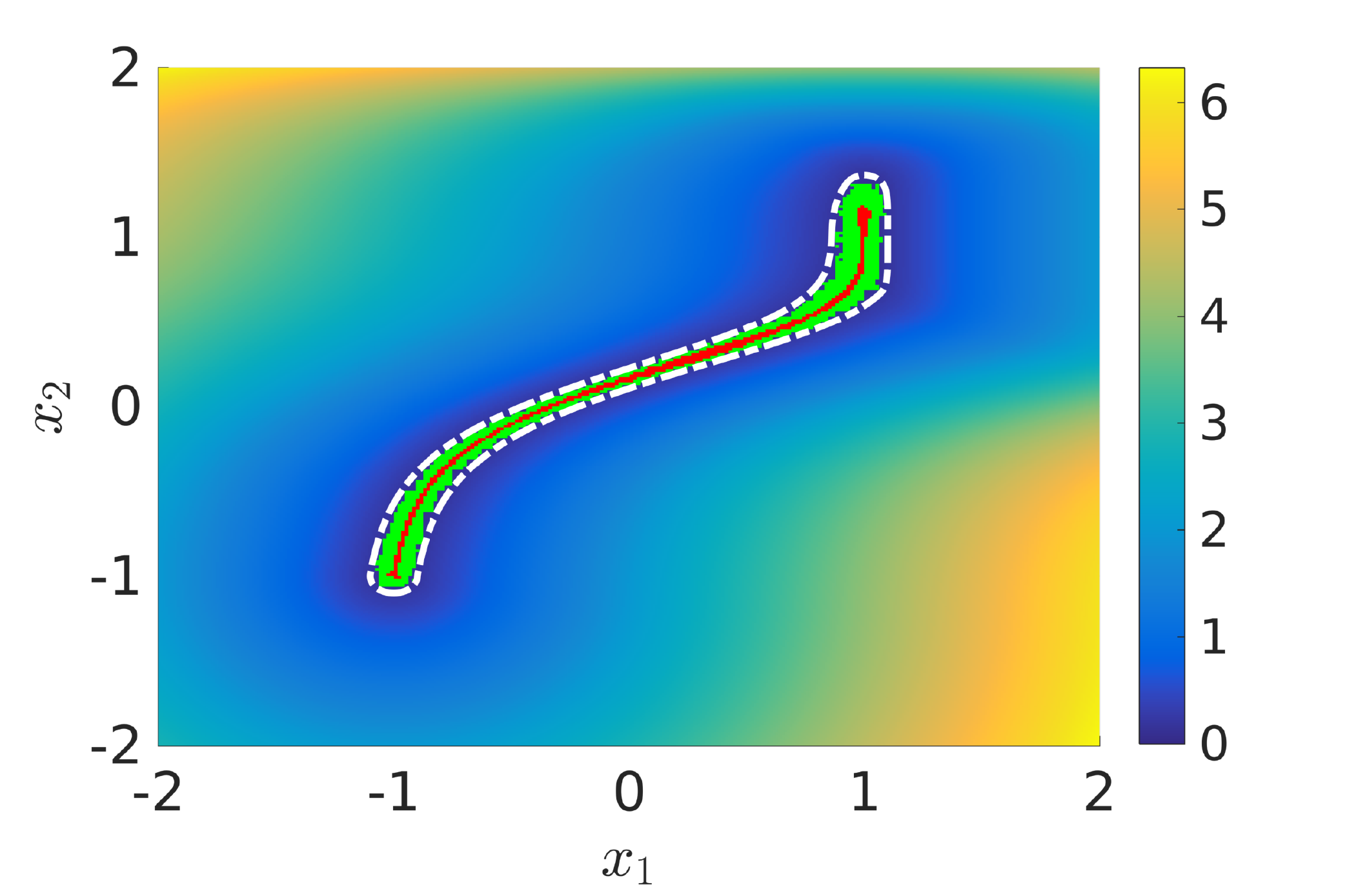}\\(a)}
	\parbox[b]{0.47\textwidth}{\centering \includegraphics[width=0.47\textwidth]{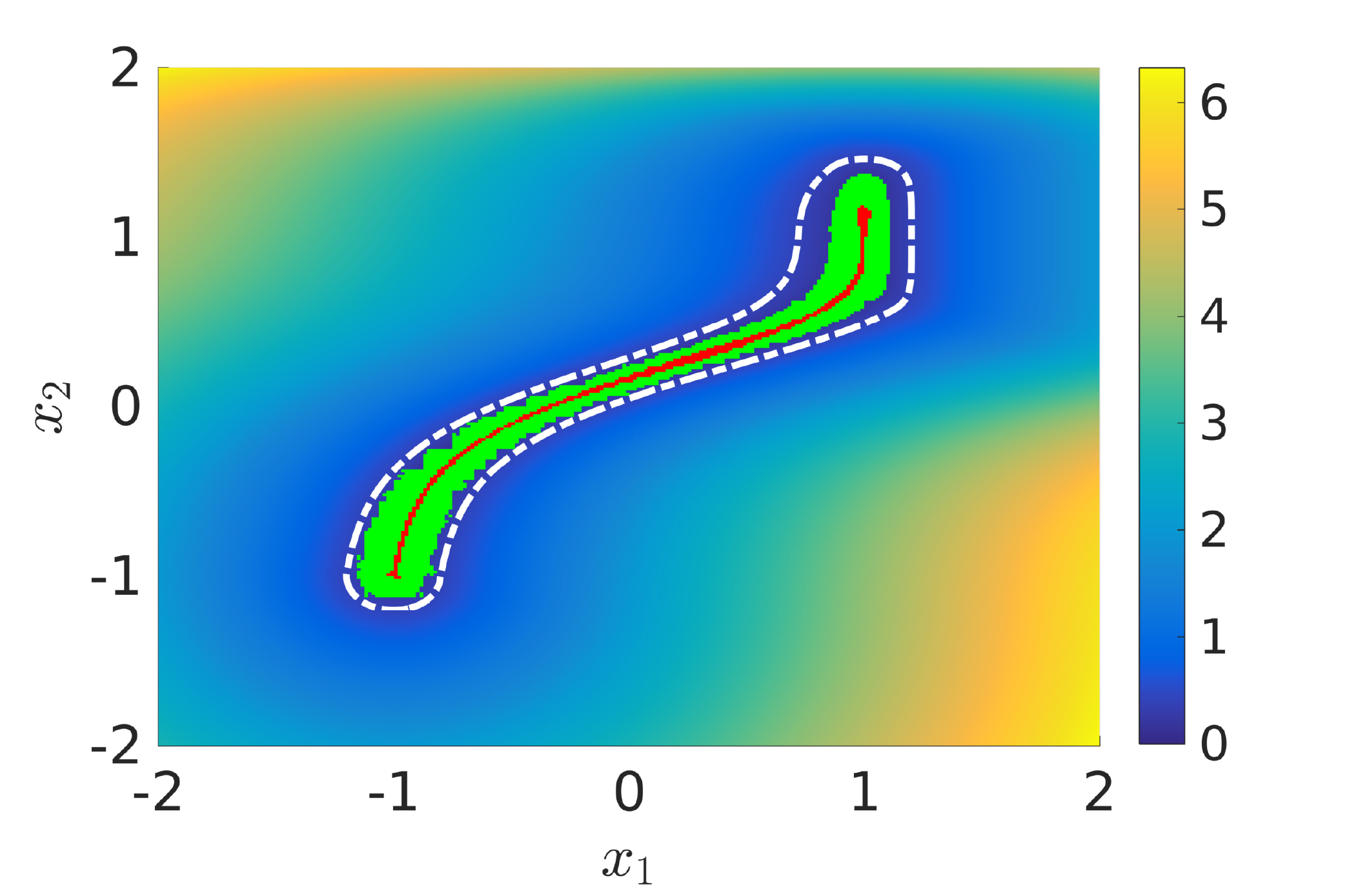}\\(b)} \\
	\parbox[b]{0.4\textwidth}{\centering \includegraphics[width=0.4\textwidth]{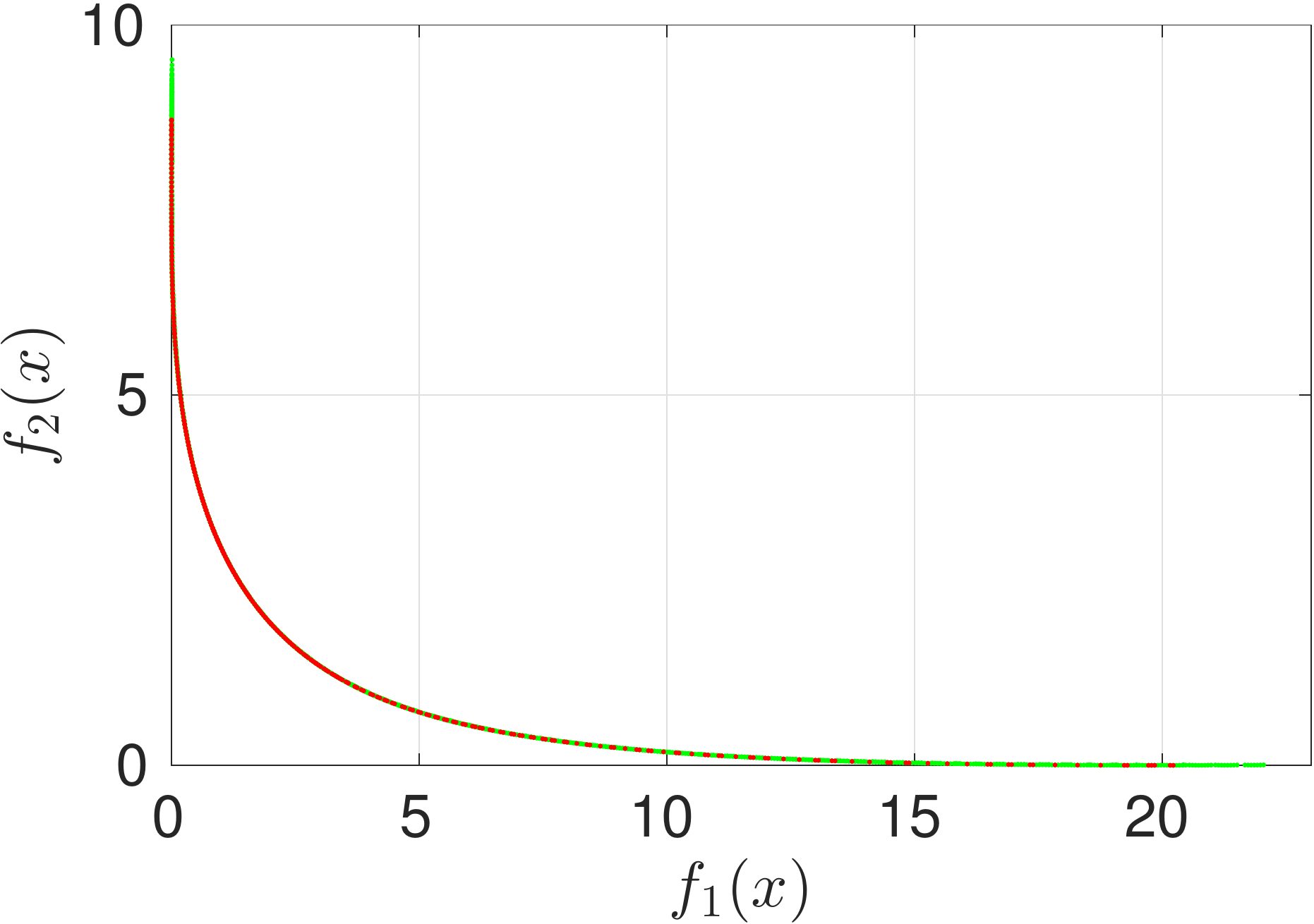}\\(c)} \hspace{0.06\textwidth}
	\parbox[b]{0.4\textwidth}{\centering \includegraphics[width=0.4\textwidth]{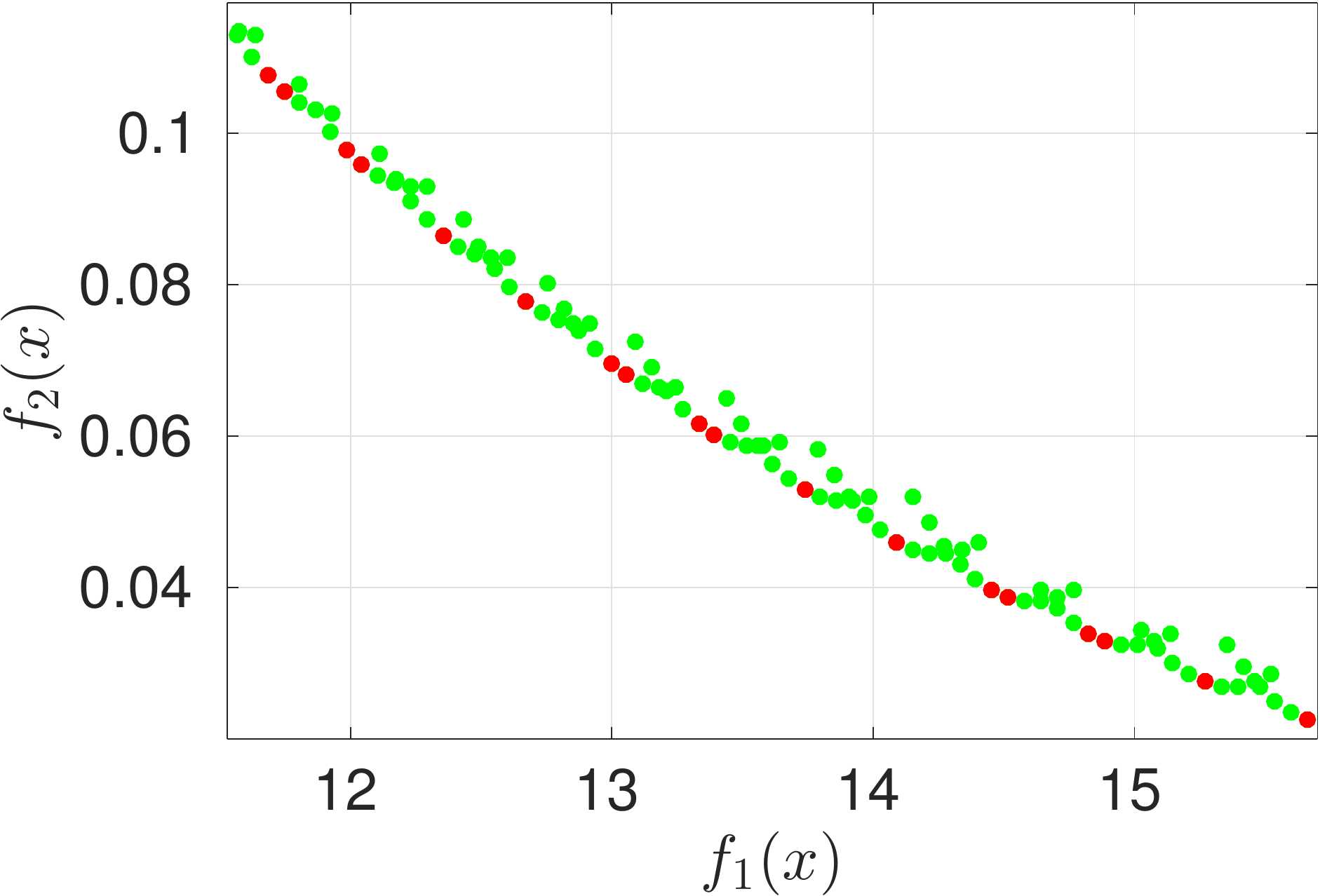}\\(d)}
	\caption{(a) Box covering of the Pareto set of problem~\eqref{eq:example_zweiparabeln} after $16$ subdivision steps (diam$(\mathcal{B}_0) = 4$, diam$(\mathcal{B}_{20}) = 1/2^{6}$). The solution without errors ($\PSsub$) is shown in red, the solution with $\epsilon = (0.1, 0.1)^{\top}$, $\xi = (0, 0)^{\top}$ ($\PSeps$) is shown in green. The background color represents the norm of the optimality condition \eqref{eq:MOP_optimality} and the white line is the iso-curve $\|q(x)\|_2 = 2 \|\epsilon\|_{\infty} = 0.2$, i.e.~the upper bound of the error. (b) Analog to (a) but with $\epsilon = ( 0, 0.2 )^{\top}$ and the iso-curve $\|q(x)\|_2 = 0.4$. (c)--(d) The Pareto fronts corresponding to (a). The points are the images of the box centers (color coding as in (a) and (b)).}
\label{fig:zweiparabeln}
\end{figure}
\begin{figure}
	\centering
	\parbox[b]{0.45\textwidth}{\centering \includegraphics[width=0.45\textwidth]{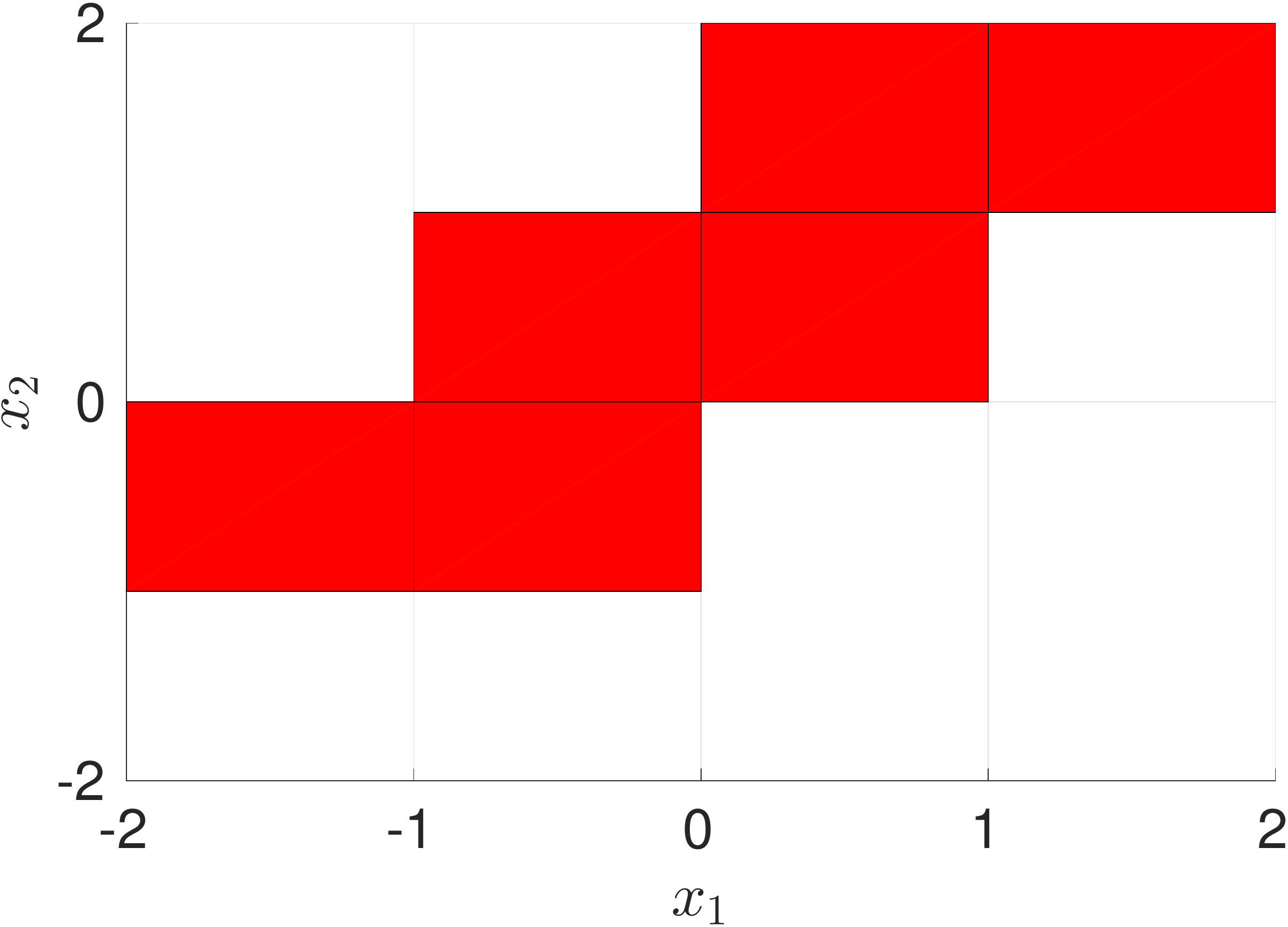}\\(a)} \hspace{0.06\textwidth}
	\parbox[b]{0.45\textwidth}{\centering \includegraphics[width=0.45\textwidth]{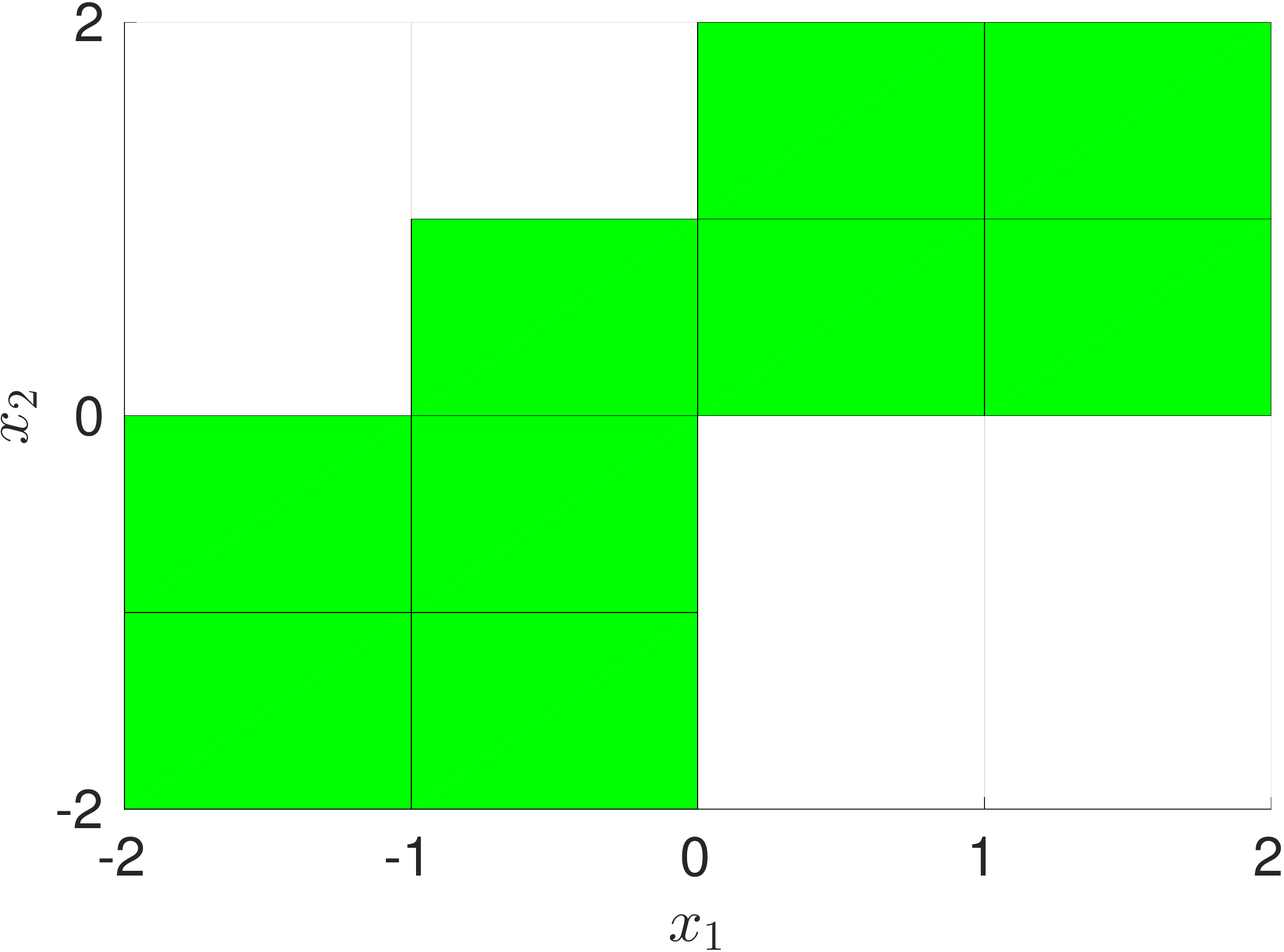}\\(b)} \\
	\parbox[b]{0.45\textwidth}{\centering \includegraphics[width=0.45\textwidth]{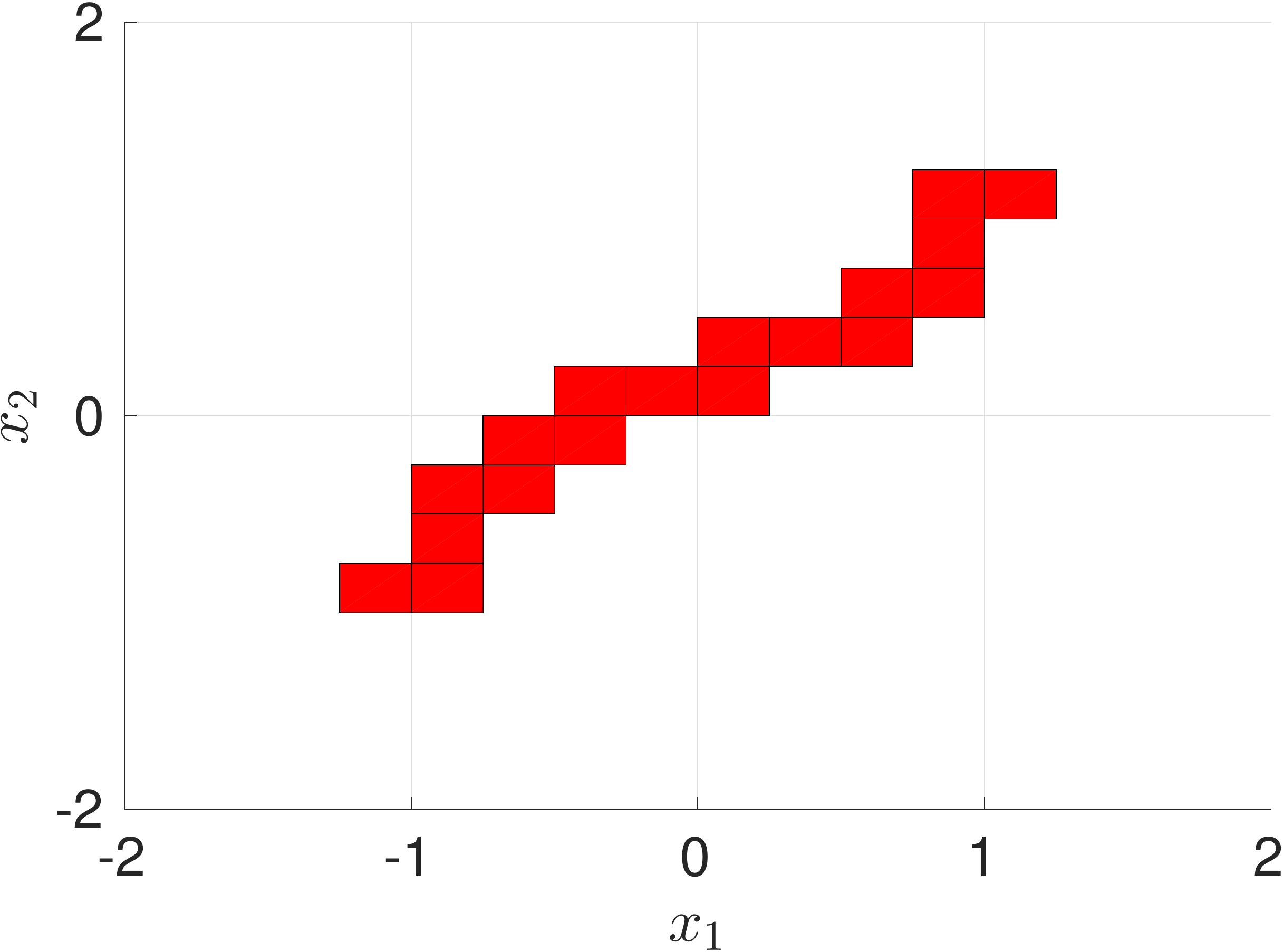}\\(c)}\hspace{0.06\textwidth}
	\parbox[b]{0.45\textwidth}{\centering \includegraphics[width=0.45\textwidth]{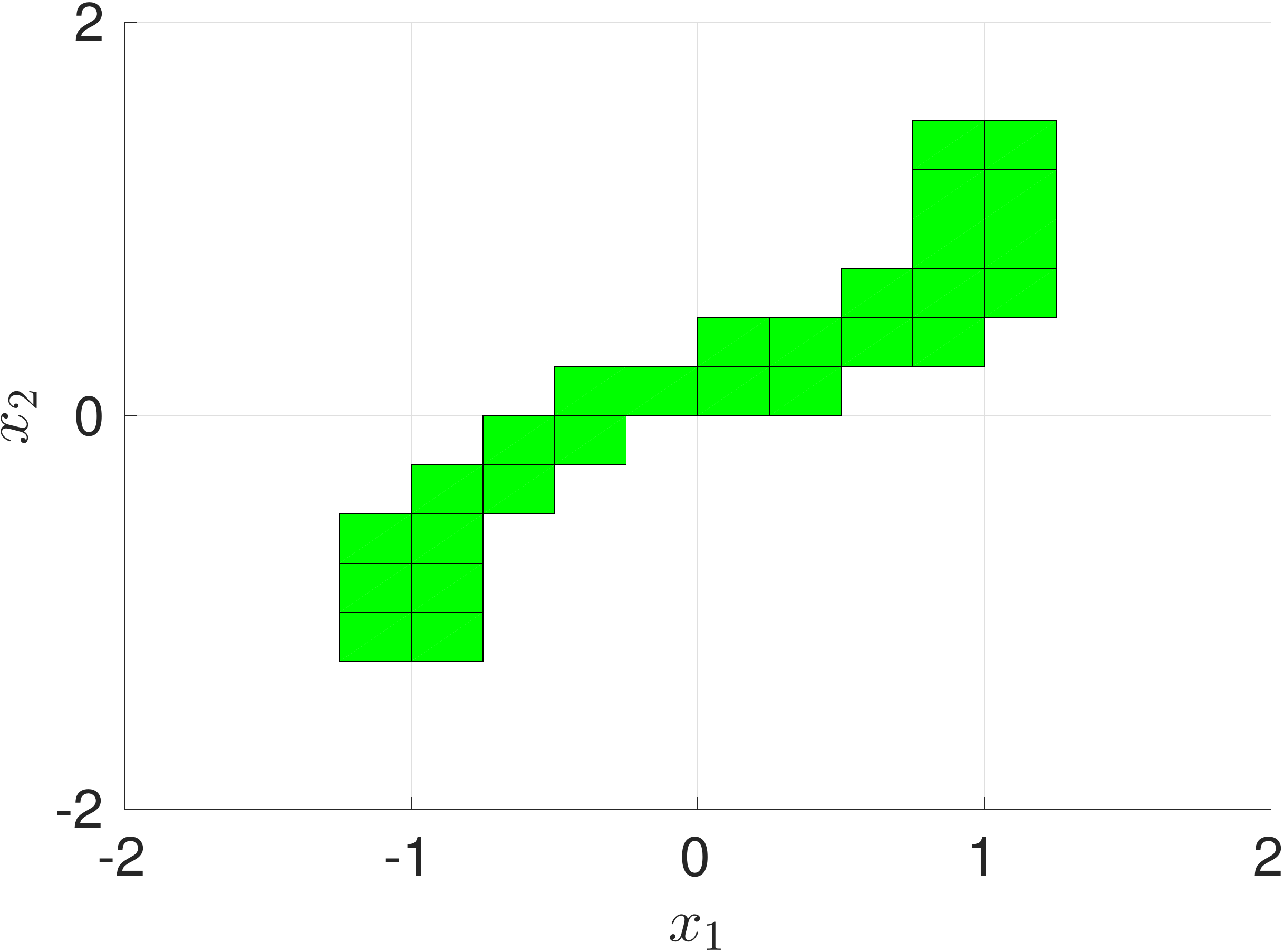}\\(d)} \\
	\parbox[b]{0.45\textwidth}{\centering \includegraphics[width=0.45\textwidth]{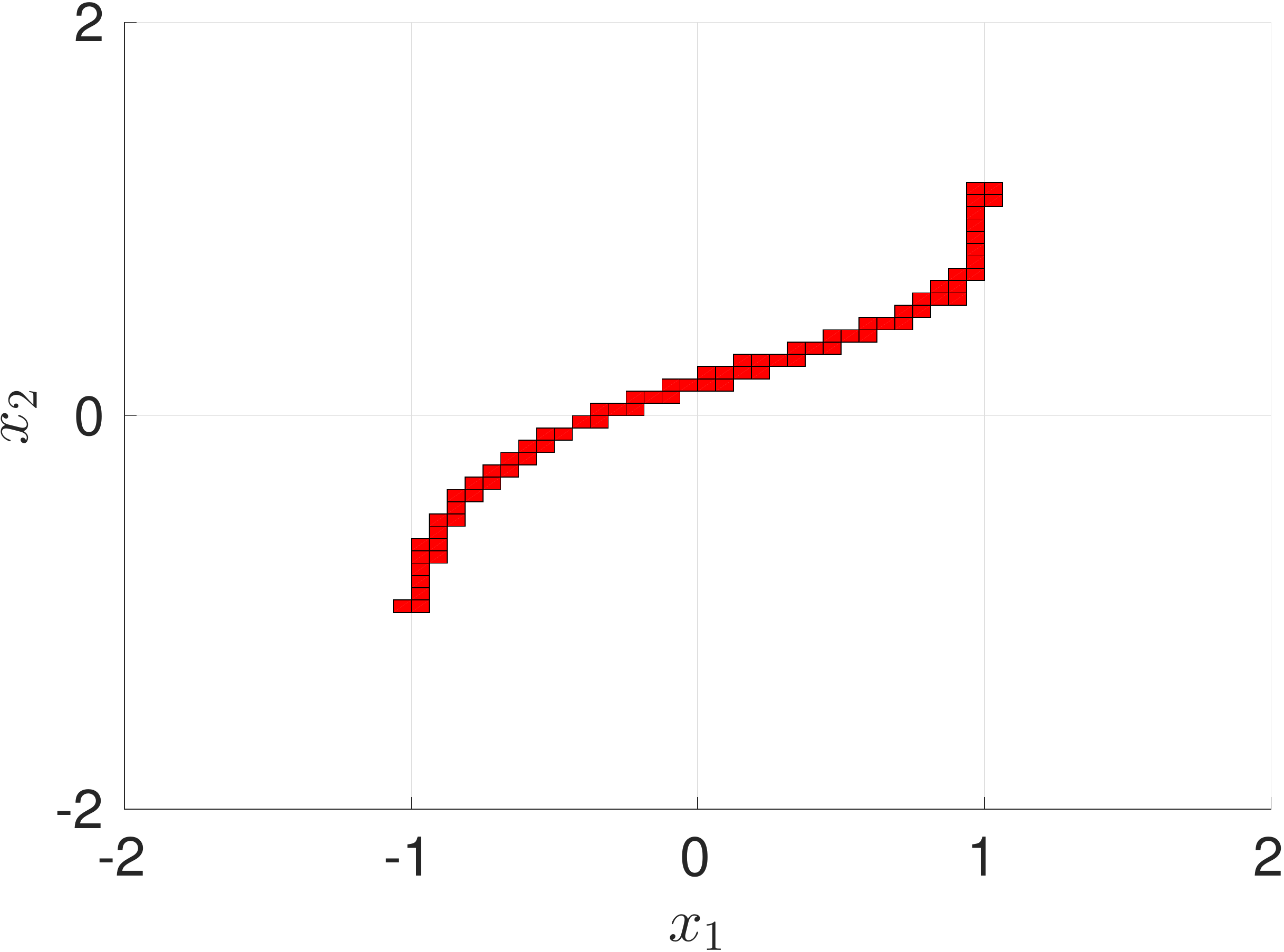}\\(e)}\hspace{0.06\textwidth}
	\parbox[b]{0.45\textwidth}{\centering \includegraphics[width=0.45\textwidth]{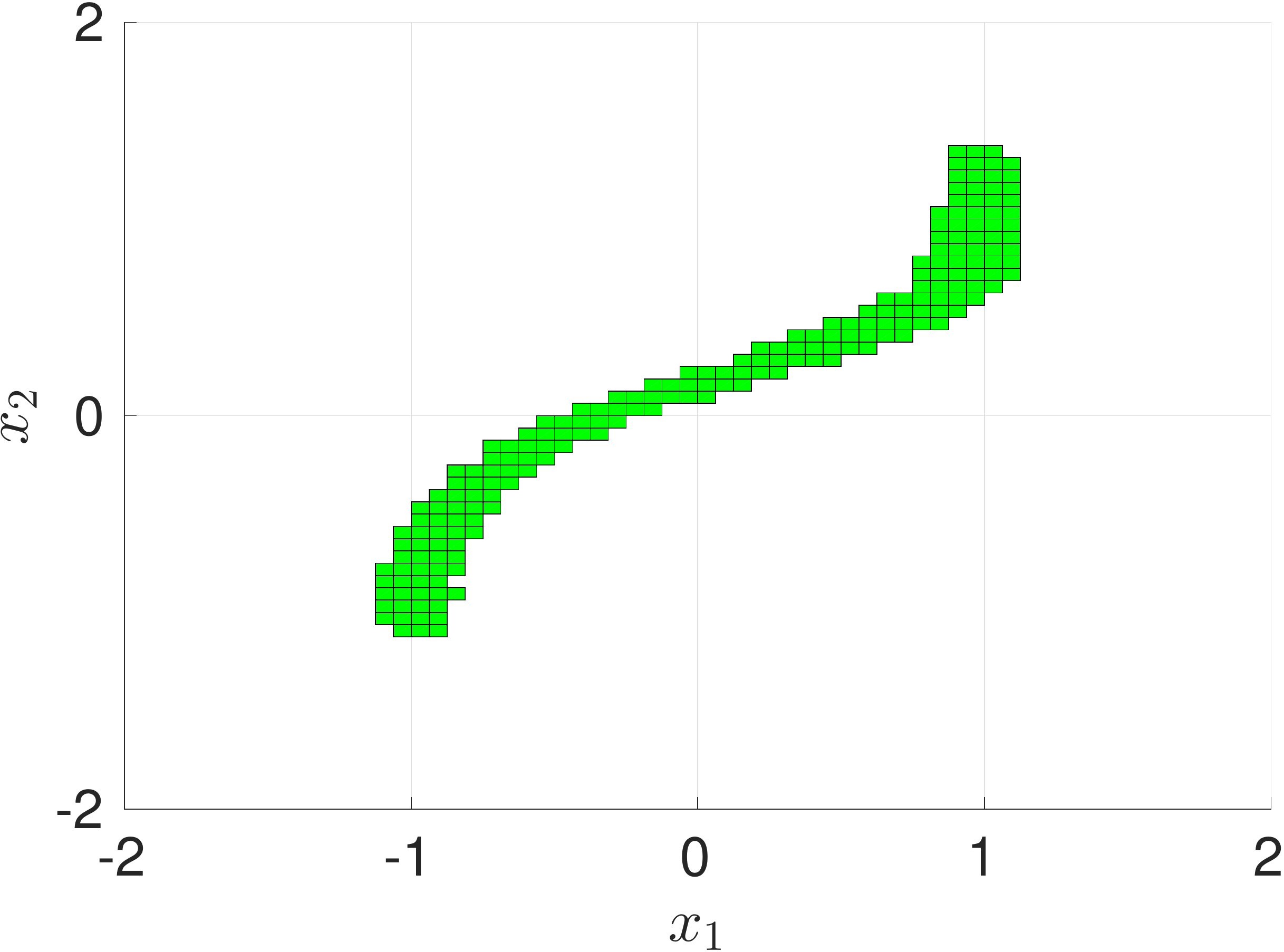}\\(f)} \\
	\caption{Comparison of the inexact and the exact solution of problem~\eqref{eq:example_zweiparabeln} after 4, 8 and 12 subdivision steps.}
	\label{fig:zweiparabeln_Stages}
\end{figure}

In the numerical realization, we approximate each box by an equidistant grid with two points in each direction, i.e.~by four sample points in total. This results in a total number of $\approx 50,000$ function evaluations for the exact problem. 
The number of boxes is much higher for the inexact solution, in this case by a factor of $\approx 8$. This is not surprising since the equality condition~\eqref{eq:MOP_optimality} is now replaced by the inequality condition~\eqref{eq:PS_epsilon}. Hence, the approximated set is no longer a $k-1$-dimensional object. All sets $B \in \mathcal{B}_s$ that satisfy the inequality condition~\eqref{eq:PS_epsilon} are not discarded in the selection step. Consequently, at a certain box size, the number of boxes increases exponentially with decreasing diameter diam$(B)$ (cf.~Figure~\ref{fig:Hausdorff}~(b)). The result is an increased computational effort for later iterations. This is visualized in Figure~\ref{fig:zweiparabeln_Stages}, where the solutions at different stages of the subdivision algorithm are compared. A significant difference between the solutions can only be observed in later stages (Figure~\ref{fig:zweiparabeln_Stages}~(e) and (f)). For this reason, an adaptive strategy needs to be developed where boxes satisfying~\eqref{eq:PS_epsilon} remain within the box collection but are no longer considered in the subdivision algorithm.

As a second example, we consider the function $F:\R^3 \rightarrow \R^3$:
\begin{align}
	\min_{x\in\R^3} F(x) = \min_{x\in\R^3} \left( \begin{array}{c}
	(x_1 - 1)^4 + (x_2 - 1)^2 + (x_3 - 1)^2 \\
	(x_1 + 1)^2 + (x_2 + 1)^4 + (x_3 + 1)^2 \\
	(x_1 - 1)^2 + (x_2 + 1)^2 + (x_3 - 1)^4
	\end{array} \right). \label{eq:example_dreiparabeln}
\end{align}
The observed behavior is very similar to the two-dimensional case, cf.~Figure~\ref{fig:dreiparabeln}, where in (b) the box covering for the inexact problem is shown as well as the iso-surface $\|q(x)\|_2 = 2 \|\epsilon\|_{\infty}$. One can see that the box covering lies inside this iso-surface except for small parts of some boxes. This is due to the finite box size and the fact that at least one sample point is mapped into the box itself. For smaller box radii, this artifact does no longer occur.
\begin{figure}
	\centering
	\parbox[b]{0.49\textwidth}{\centering \includegraphics[width=0.45\textwidth]{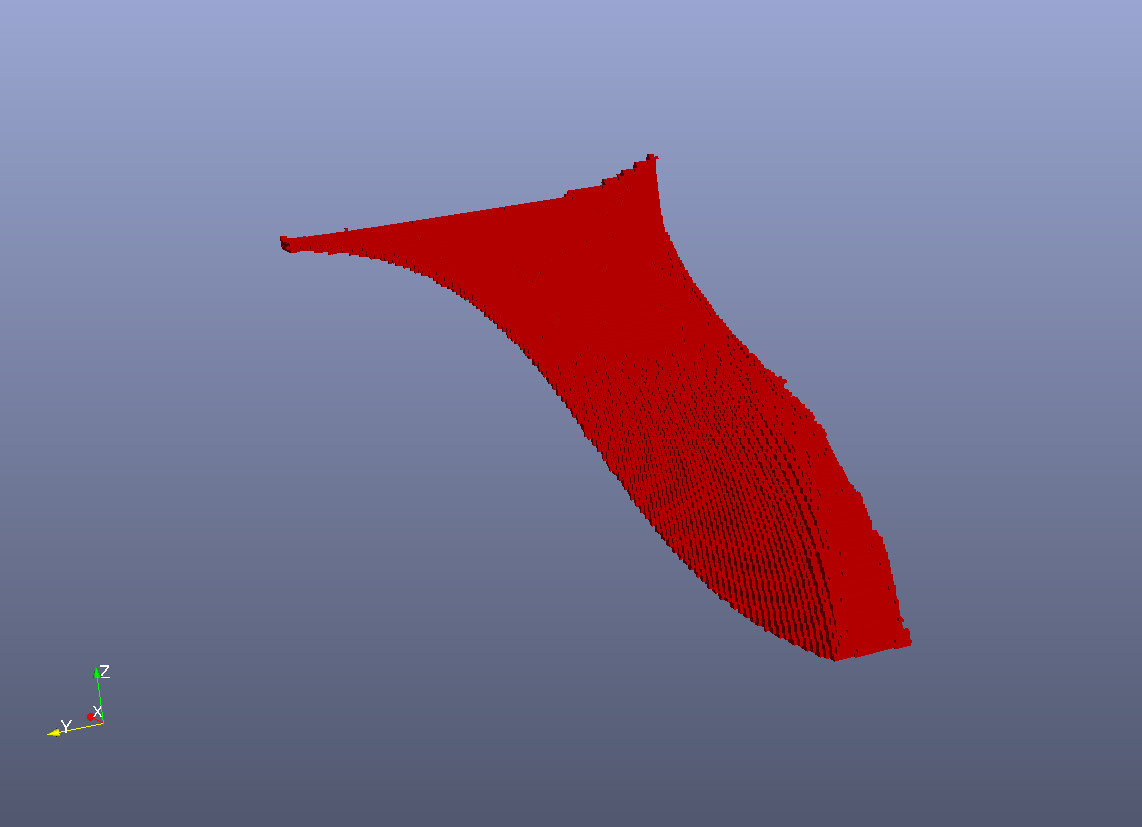}\\(a)}
	\parbox[b]{0.49\textwidth}{\centering \includegraphics[width=0.45\textwidth]{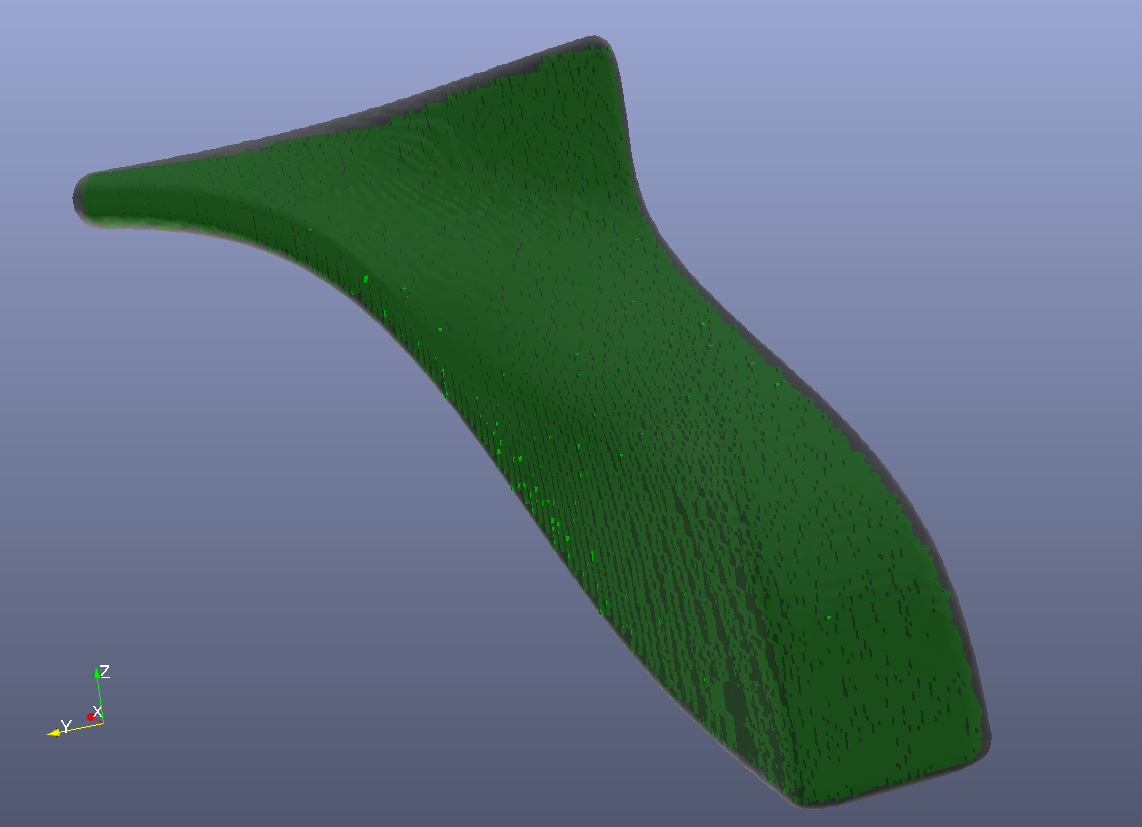}\\(b)} \\ \vspace{0.25cm}
	\parbox[b]{0.49\textwidth}{\centering \includegraphics[width=0.45\textwidth]{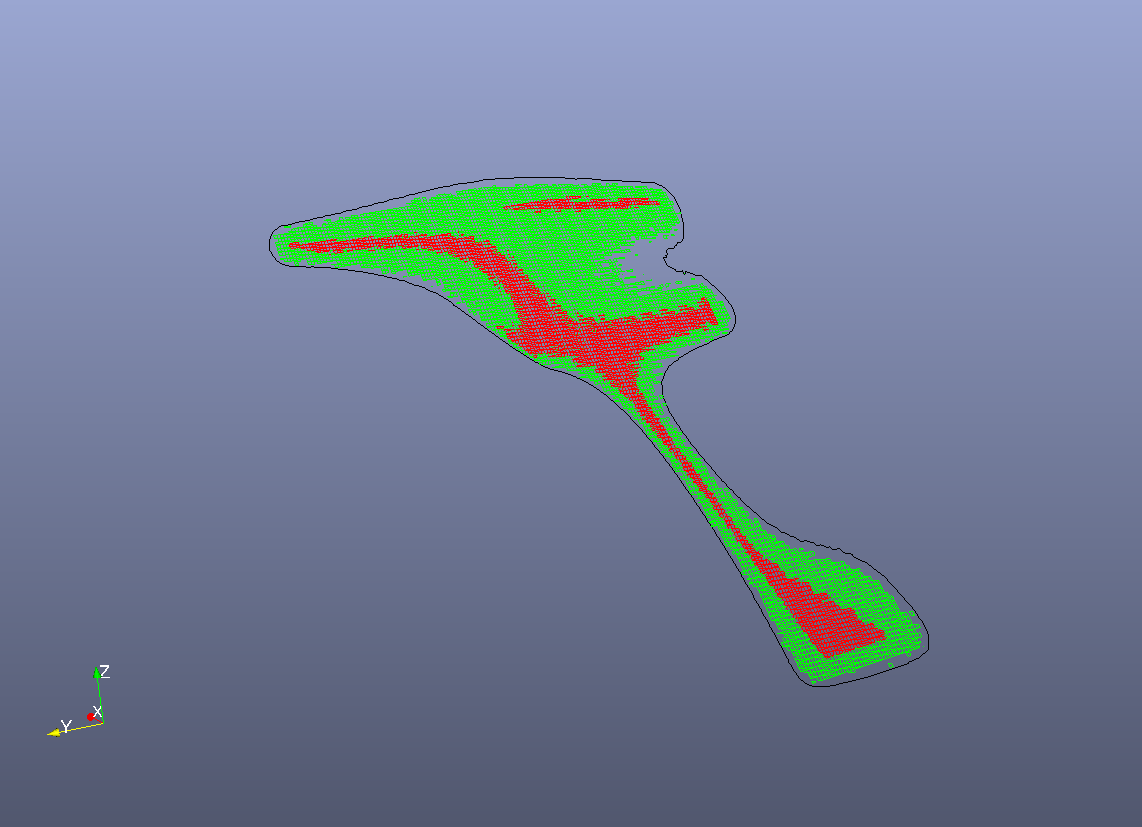}\\(c)}
	\parbox[b]{0.49\textwidth}{\centering \includegraphics[width=0.45\textwidth]{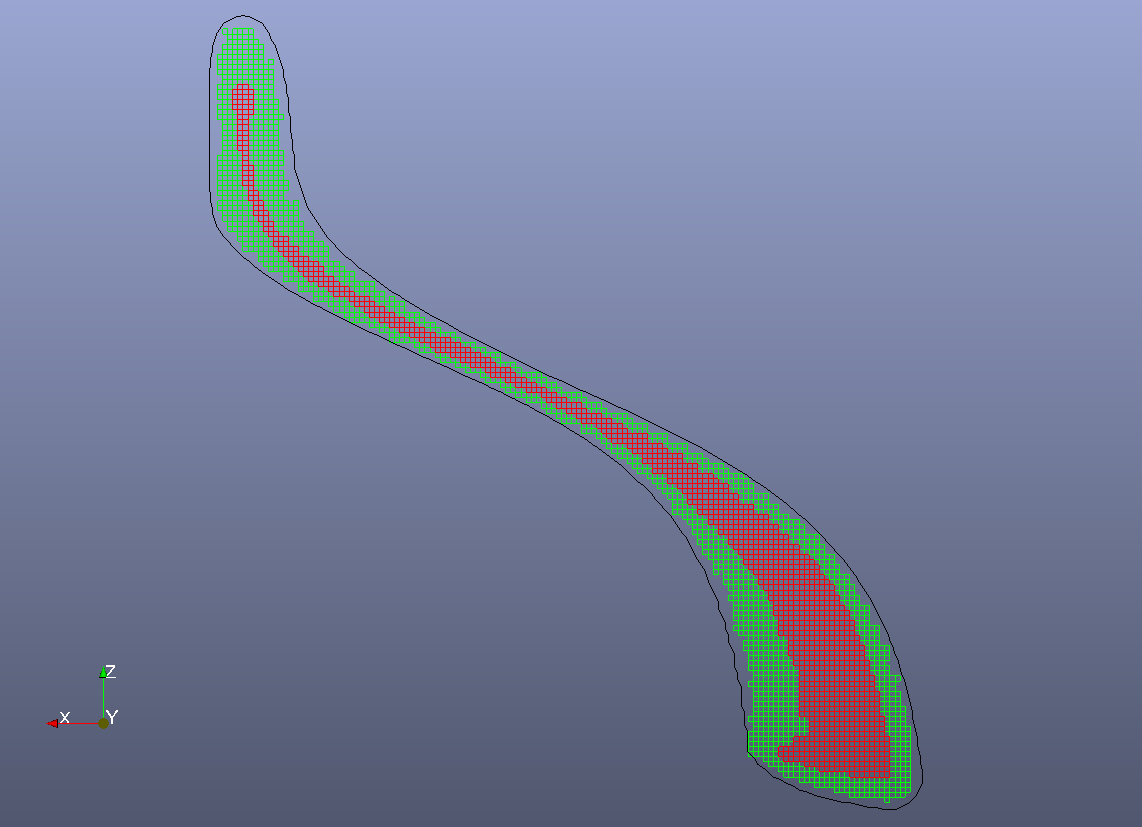}\\(d)} \\
	\parbox[b]{0.9\textwidth}{\centering \includegraphics[width=0.9\textwidth]{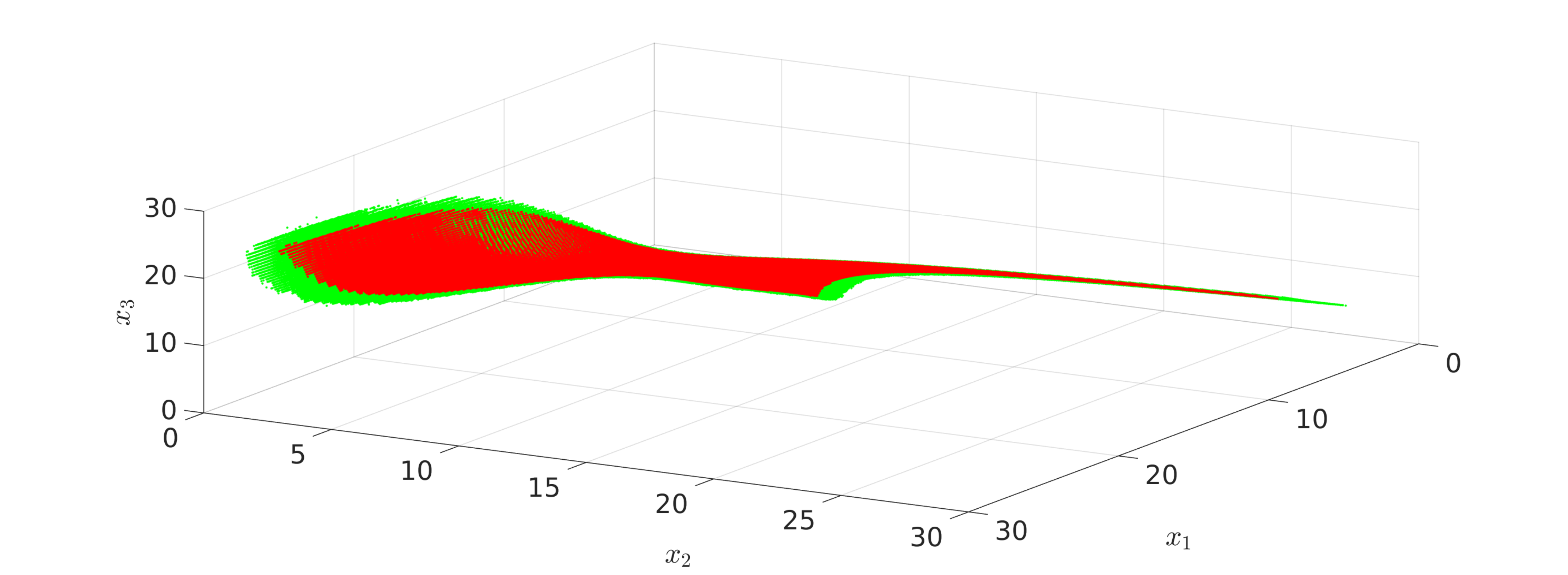}\\(e)}
	\caption{Box coverings of the Pareto set (exact solution: red, inexact solution: green) of problem~\eqref{eq:example_dreiparabeln} after $24$ subdivision steps (diam$(\mathcal{B}_0) = 4$, diam$(\mathcal{B}_{24}) = 1/2^{6}$). (a) Solution without errors. (b) Solution with $\xi = (0,0,0)^{\top}$ and $\epsilon_i = (0.1,0.1,0.1)^{\top}$. The iso-surface ($\|q(x)\|_2 = 2 \|\epsilon\|_{\infty} = 0.2$) is the upper bound of the error. (c)--(d) Two-dimensional cut planes through the Pareto set. The point of view in (c) is as in (a), (b) and (d) is a cut plane parallel to the $x_2$ plane at $x_2=-0.9$. (e) The corresponding Pareto fronts.}
	\label{fig:dreiparabeln}
\end{figure}

Finally, we consider an example where the Pareto set is disconnected. This is an example from production and was introduced in \cite{SSW02}. We want to minimize the failure of a product which consists of $n$ components. The probability of failing is modeled individually for each component and depends on the additional cost $x$:
\begin{align*}
	p_1(x) &= 0.01 \exp\left(-(x_1/20)^{2.5}\right), \\
	p_2(x) &= 0.01 \exp\left(-(x_2/20)^{2.5}\right), \\
	p_j(x) &= 0.01 \exp\left(-x_1/15\right), \qquad j = 3, \ldots, n.
\end{align*}
The resulting MOP is thus to minimize the failure and the additional cost at the same time:
\begin{align}
	\min_{x\in\R^n} F(x) = \min_{x\in\R^n} \left( \begin{array}{c}
		\sum_{j=1}^{n} x_j \\
		1 - \sum_{j=1}^{n} \left( 1 - p_j(x) \right)
	\end{array} \right). \label{eq:example_SSW_MOP}
\end{align}
We now assume that the additional cost $x$ is subject so some uncertainty, e.g.~due to varying prices, and set $| \widetilde{x}_i - x_i | < 0.01$ for $i = 1, \ldots, n$. Using this, we can estimate the error bounds within the initial box $\mathcal{B}_0 = [0,40]^{n}$ and obtain $\xi = (0.05, 2 \cdot 10^{-5})^\top$ and $\epsilon = (0, 8 \cdot 10^{-7})^\top$.
\begin{figure}
	\centering
	\parbox[b]{0.47\textwidth}{\centering \includegraphics[width=0.47\textwidth]{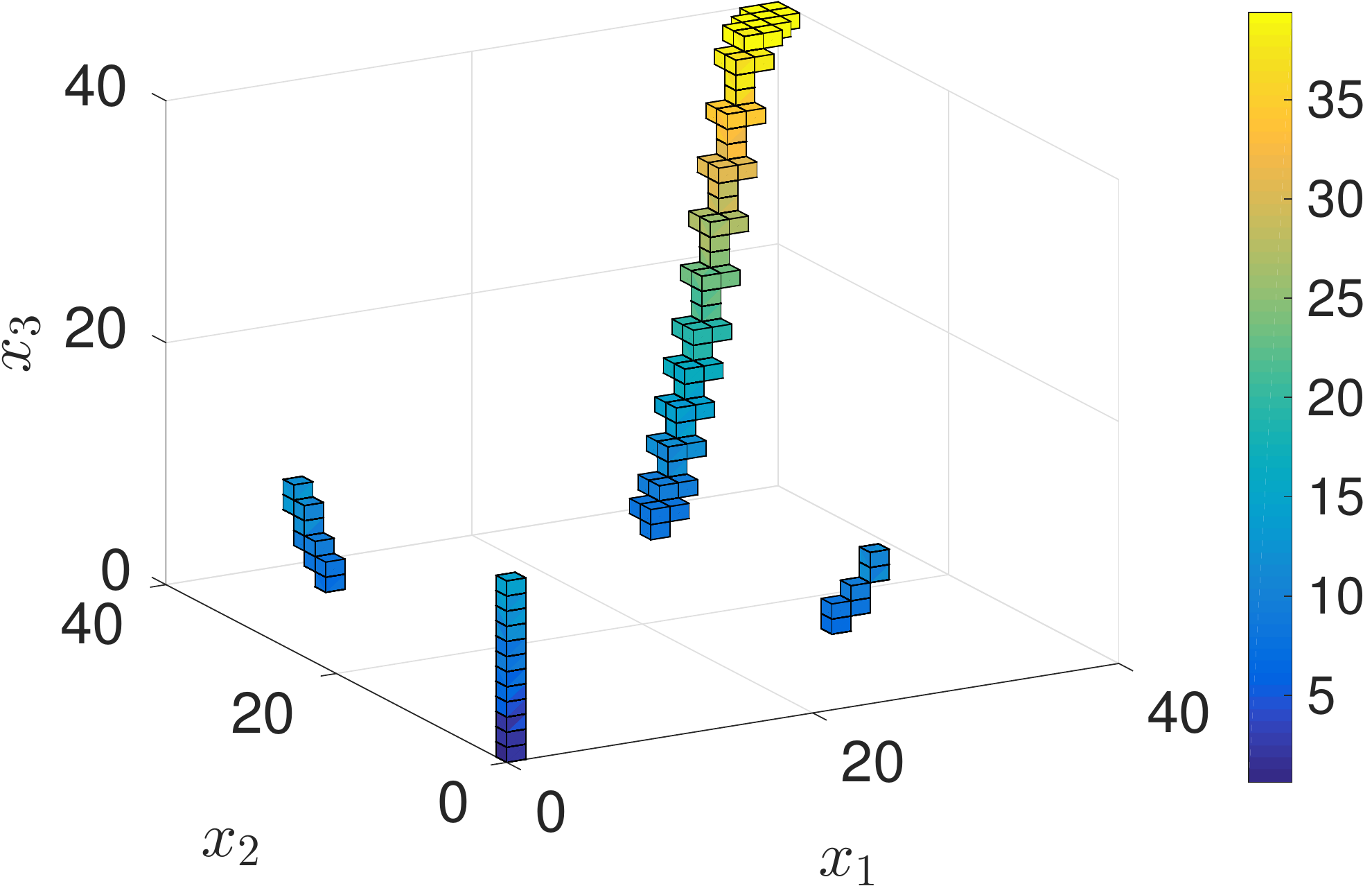}\\(a)} \quad
	\parbox[b]{0.47\textwidth}{\centering \includegraphics[width=0.47\textwidth]{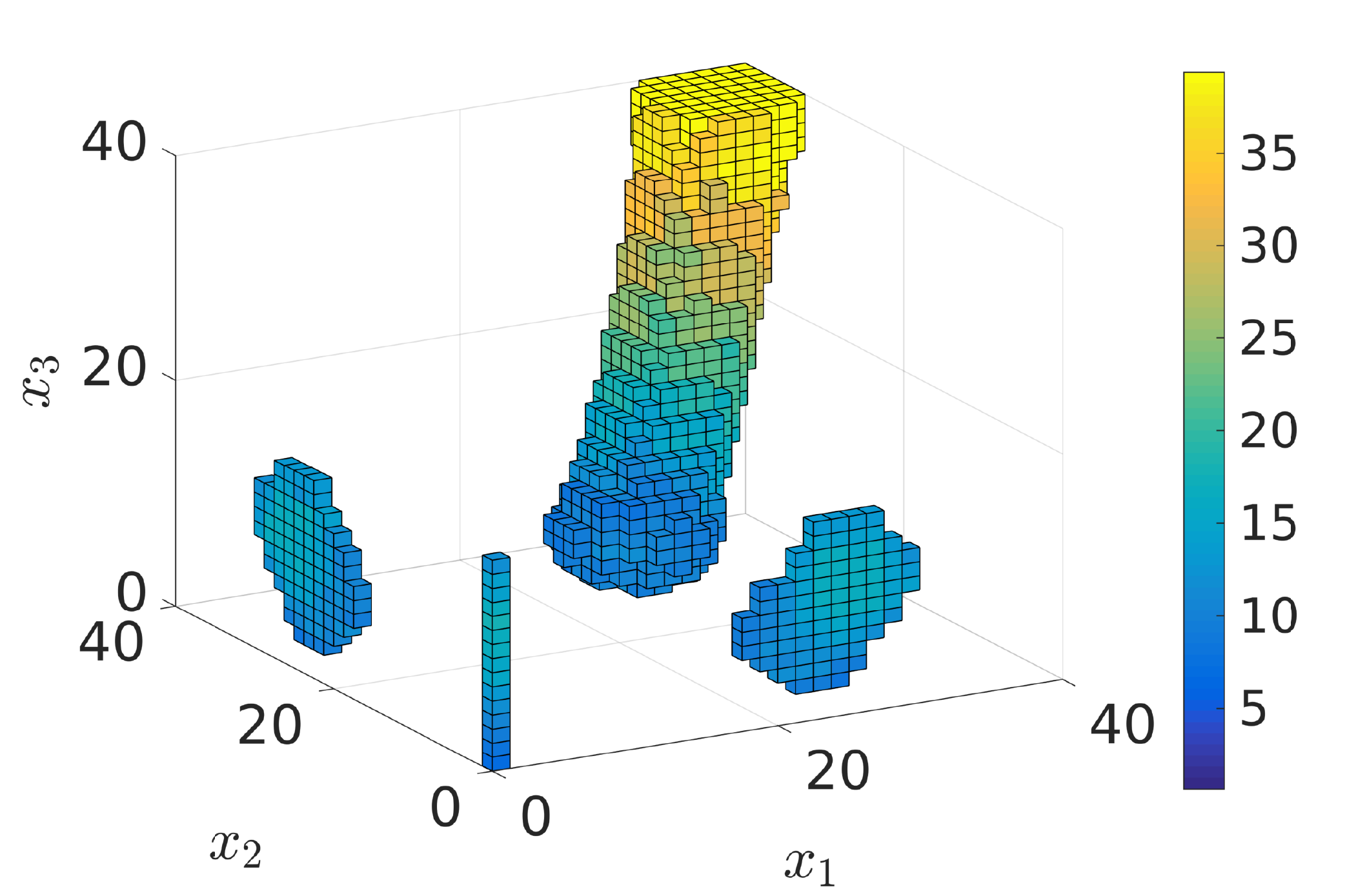}\\(b)} \\~\\
	\parbox[b]{0.47\textwidth}{\centering \includegraphics[width=0.45\textwidth]{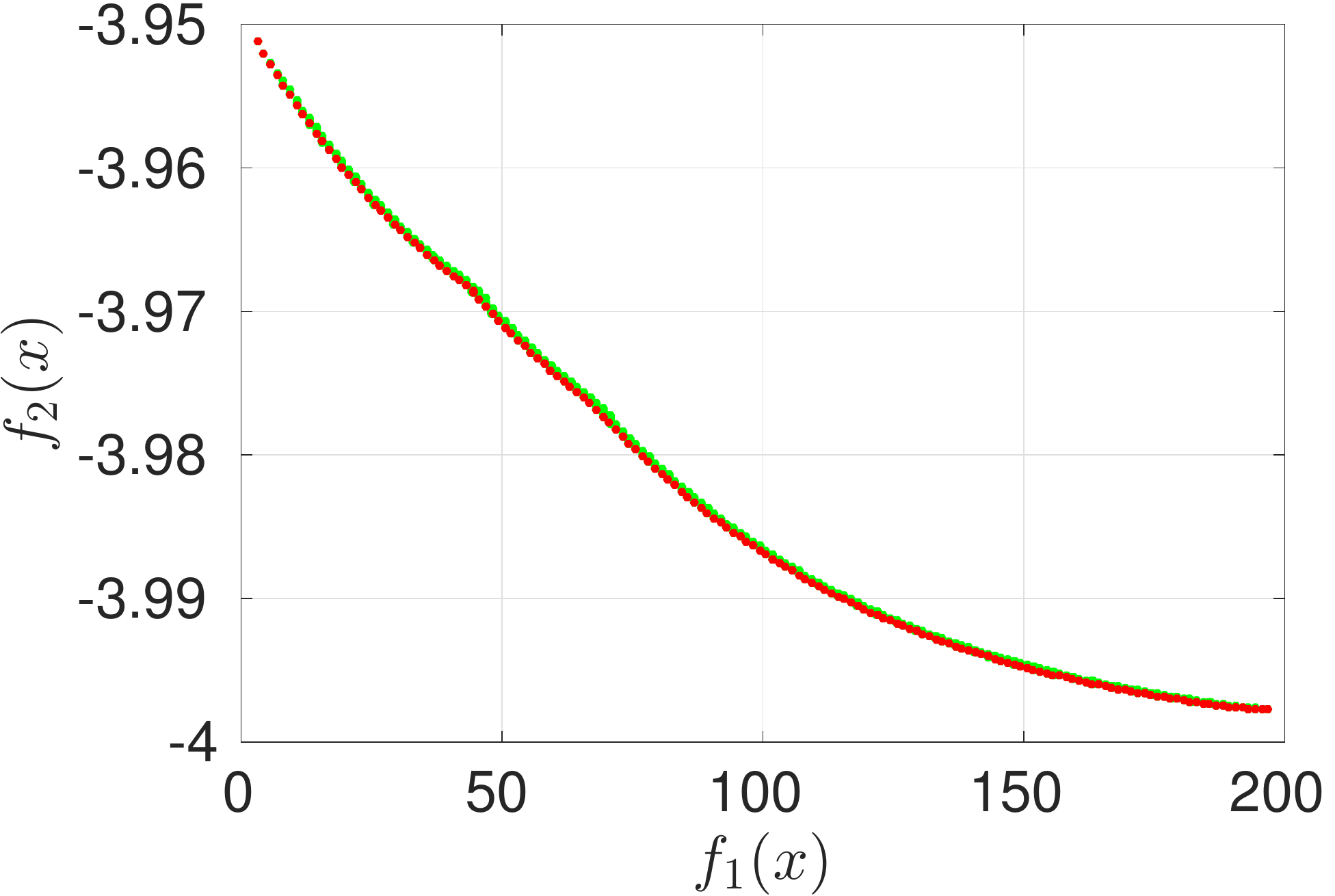}\\(c)} \qquad
	\parbox[b]{0.47\textwidth}{\centering \includegraphics[width=0.45\textwidth]{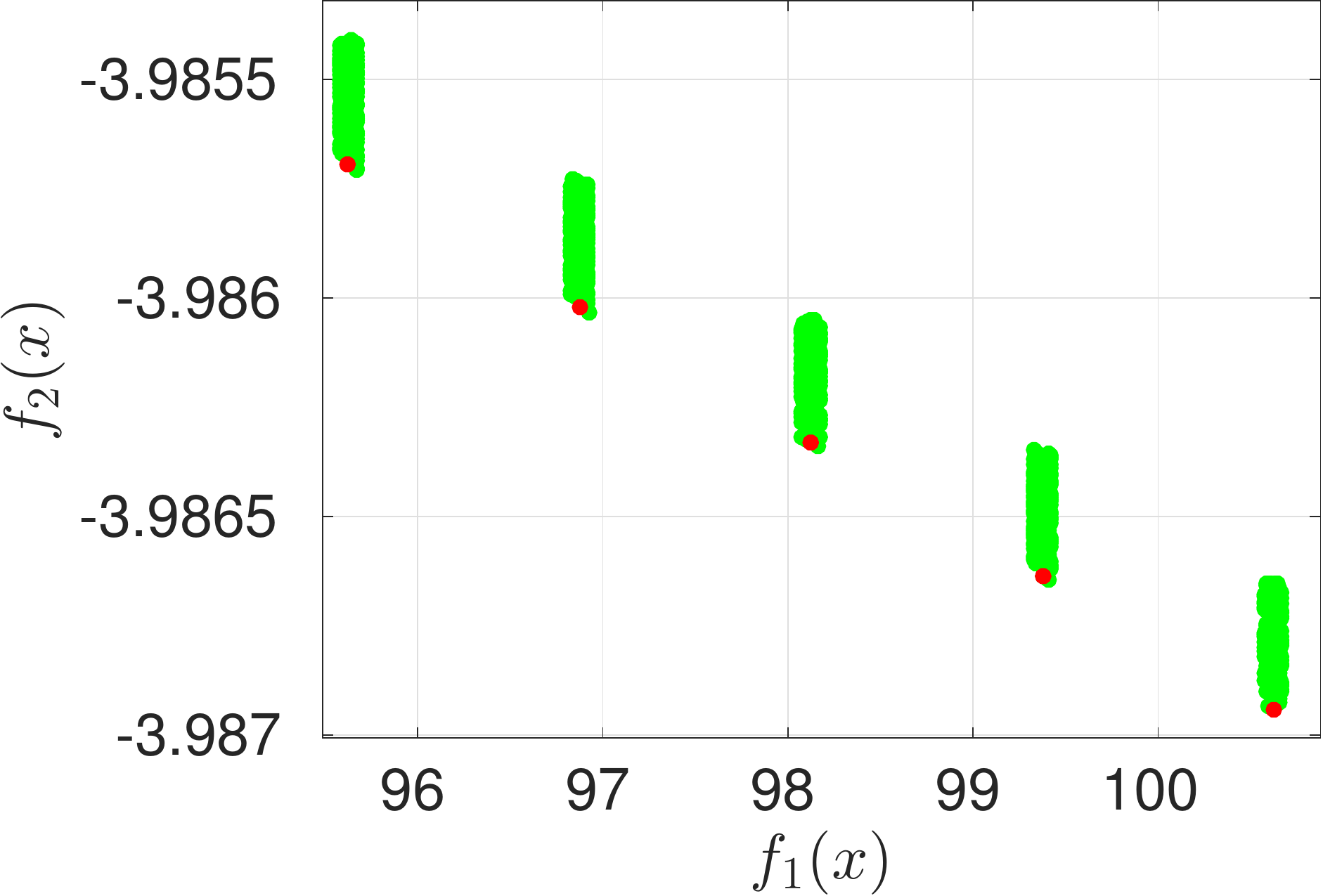}\\(d)}
	\caption{(a) Projection of the box covering of $\PS$ for problem~\eqref{eq:example_SSW_MOP} with $n=5$ after $25$ subdivision steps (diam$(\mathcal{B}_0) = 40$, diam$(\mathcal{B}_{25}) = 1.25$). The coloring represents the fourth component $x_4$. (b) Box covering of $\PSxi$ with inexact data $\widetilde{x}$ with $| \widetilde{x}_i - x_i | < 0.01$ for $i = 1, \ldots, 5$. (c)--(d) The Pareto fronts corresponding to (a) and (b) in green and red, respectively. The points are the images of the box centers.}
\end{figure}

Since the set of substationary points is disconnected, we here utilize the Sampling Algorithm~\ref{algo:Sampling}. For $n=5$, the resulting Pareto sets $\PS$ and $\PSxi$ are depicted in Figures~\ref{fig:production}~(a) and (b), respectively. Due to the small gradient of the objectives, his results in a significantly increased number of boxes by a factor of $\approx 300$ (cf.~Figure~\ref{fig:Hausdorff}~(b)). 

The quality of the solution can be measured using the Hausdorff distance $d_h(\PS,\PSxi)$ \cite{RW98}. This is depicted in Figure~\ref{fig:Hausdorff}~(a), where the distance between the exact and the inexact solution is shown for all subdivision steps for the three examples above. We see that the distance reaches an almost constant value in the later stages. This distance is directly influenced by the upper bounds $\epsilon$ and $\xi$, respectively. However, it cannot simply be controlled by introducing a bound on the error in the objectives or gradients since it obviously depends on the objective functions. Hence, in order to limit the error in the decision space, further assumptions on the objectives have to be made.
\begin{figure}
	\centering
	\parbox[b]{0.47\textwidth}{\centering \includegraphics[width=0.4\textwidth]{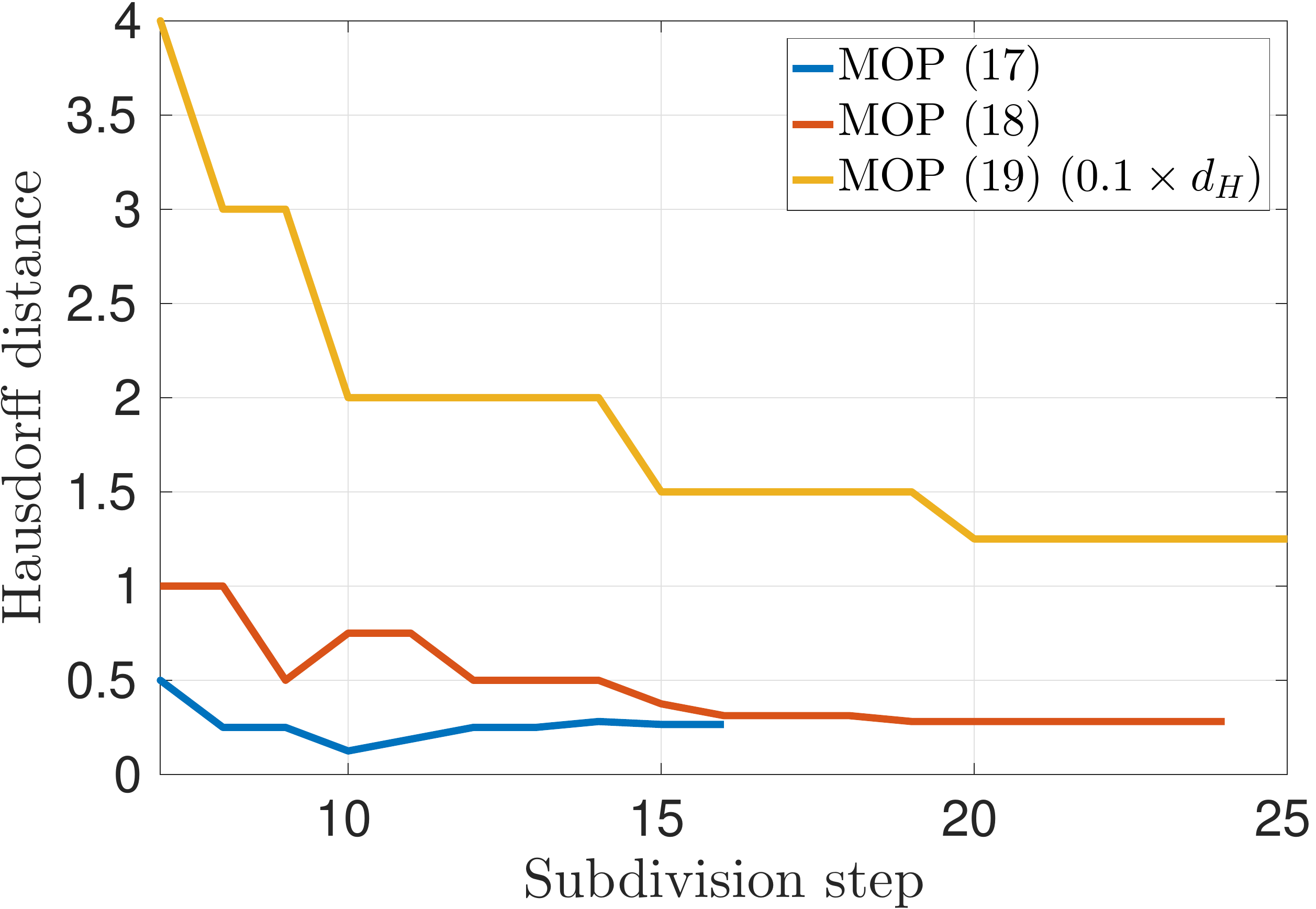}\\(a)} \quad
	\parbox[b]{0.47\textwidth}{\centering \includegraphics[width=0.39\textwidth]{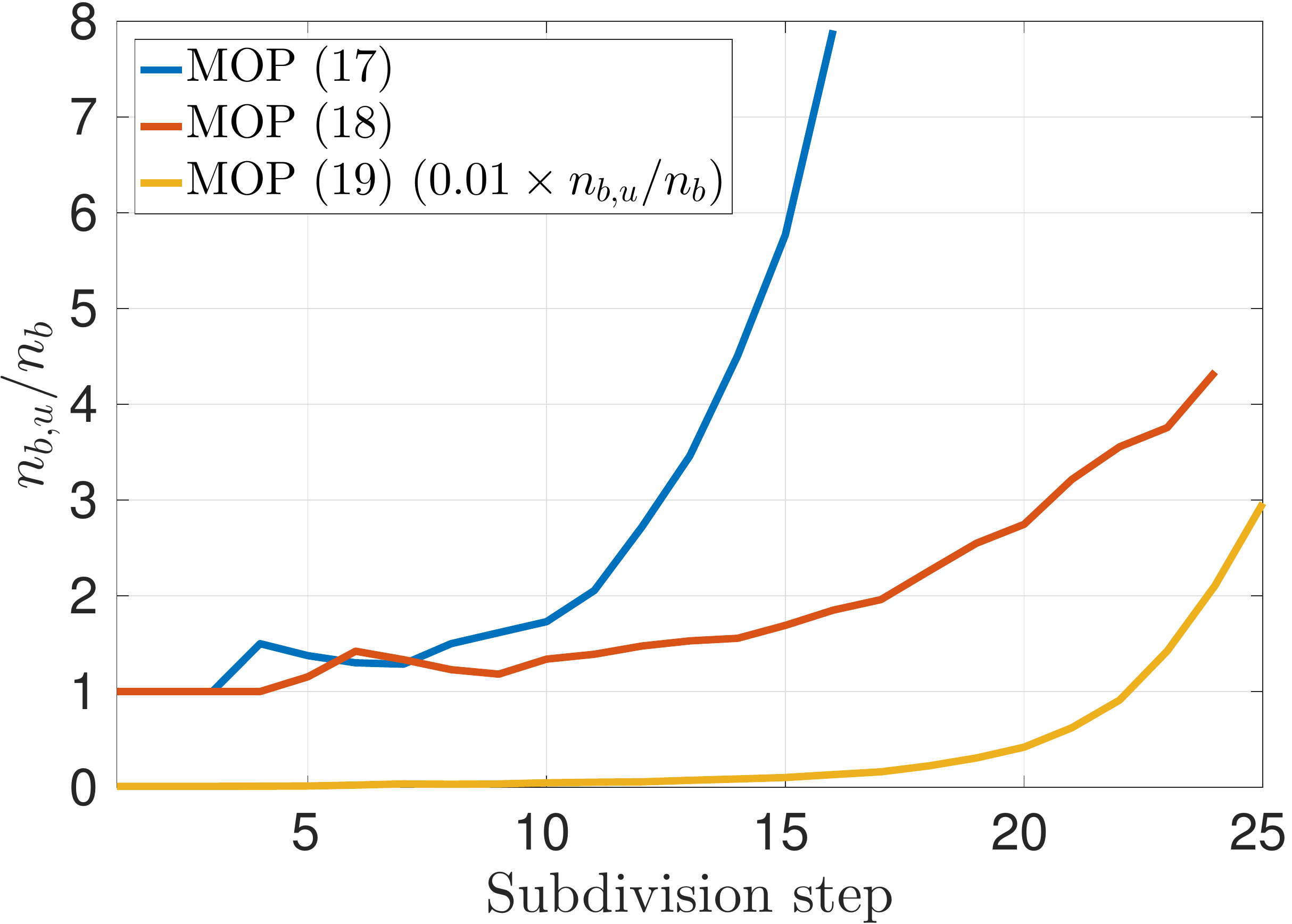}\\(b)}
	\caption{(a) Hausdorff distance $d_h(\PS,\PSxi)$ for the three examples above. (b) The corresponding ratio of box numbers between the inexact solution ($n_{b,u}$) and the exact solution ($n_b$).}
	\label{fig:Hausdorff}
\end{figure}

\section{Conclusion}
\label{sec:Conclusion}
In this article, we present an extension to the subdivision algorithms developed in \cite{DSH05} for MOPs with uncertainties in the form of inexact function and gradient information. An additional condition for a descent direction is derived in order to account for the inaccuracies in the gradients. Convergence of the extended subdivision algorithm to a superset of the Pareto set is proved and an upper bound for the maximal distance to the set of substationary points is given. When taking into account errors, the number of boxes in the covering of the Pareto set increases, especially for later iterations, causing larger computational effort. For this reason, an adaptive strategy needs to be developed where boxes approximately satisfying the KKT conditions (according to \eqref{eq:PS_epsilon}) remain within the box collection but are no longer considered in the subdivision algorithm. Furthermore, we intend to extend the approach to constrained MOPs in the future. A comparison to other methods, especially memetic algorithms, would be interesting to investigate the numerical efficiency of the presented method.
Finally, we intend to combine the subdivision algorithm with model order reduction techniques in order to solve multiobjective optimal control problems constrained by PDEs (see e.g.~\cite{BBV16} for the solution of bicriterial MOPs with the \emph{reference point} approach or \cite{POBD15} for multiobjective optimal control of the Navier-Stokes equations). For the development of reliable and robust multiobjective optimization algorithms, error estimates for reduced order models \cite{TV09} need to be combined with concepts for inexact function values and gradients. The idea is to set upper bounds for the errors $\xi$ and $\epsilon$ and thereby define the corresponding accuracy requirements for the reduced order model.
\\~

\textbf{Acknowledgement:} This work is supported by the Priority Programme SPP 1962 ``Non-smooth and Complementarity-based Distributed Parameter Systems'' of the German Research Foundation (DFG).

\newpage
\bibliographystyle{alphaabbr}
\bibliography{Bib}
\end{document}